\documentclass{amsart}
\usepackage{tikz,tikz-cd}
\usepackage{url}
\usepackage{graphicx}
\usepackage{color}
\usepackage{times}
\usepackage{enumerate,latexsym}
\usepackage{amsmath,amssymb}
\usepackage{amsthm}
\usepackage{verbatim}
\usepackage{mathrsfs}
\vfuzz2pt 
\hfuzz2pt 
\newtheorem{thm}{Theorem}[section]
\newtheorem*{theorem*}{Theorem}
\newtheorem*{acknowledgement*}{Acknowledgement}
\newtheorem{cor}[thm]{Corollary}

\newtheorem{lem}[thm]{Lemma}
\newtheorem{prop}[thm]{Proposition}
\theoremstyle{definition}
\newtheorem{defn}[thm]{Definition}
\theoremstyle{remark}
\newtheorem{rem}[thm]{Remark}

\numberwithin{equation}{section}

\newcommand{\set}[1]{\left\{#1\right\}}
\newcommand{\Real}{\mathbb R}

\newcommand{\func}[1]{\ensuremath{\mathop{\mathrm{#1}}} }

\newcommand{\spt}[0]{\func{spt}}

\newcommand{\cC}[0]{\mathcal{C}}

\newcommand{\OO}{\mathbf{0}}

\newcommand{\rstar}[1]{(\star_{#1})}

\title{Topological uniqueness for self-expanders of small entropy}

\author{Jacob Bernstein}
\address{Department of Mathematics, Johns Hopkins University, 3400 N. Charles Street, Baltimore, MD 21218}
\email{bernstein@math.jhu.edu}

\author{Lu Wang}
\address{Department of Mathematics, California Institute of Technology, 1200 E. California Boulevard, Pasadena, CA 91125}
\email{drluwang@caltech.edu}

\begin{document}

\begin{abstract}
For a fixed regular cone in Euclidean space with small entropy we show that all smooth self-expanding solutions of the mean curvature flow that are asymptotic to the cone are in the same isotopy class.
\end{abstract}

\maketitle

\section{Introduction} \label{IntroSec}
A \emph{hypersurface}, i.e., a properly embedded codimension-one submanifold, $\Sigma\subset\mathbb{R}^{n+1}$, is a \emph{self-expander} if
\begin{equation} \label{ExpanderEqn}
\mathbf{H}_\Sigma=\frac{\mathbf{x}^\perp}{2}.
\end{equation}
Here 
$$
\mathbf{H}_\Sigma=\Delta_\Sigma\mathbf{x}=-H_\Sigma\mathbf{n}_\Sigma=-\mathrm{div}_\Sigma(\mathbf{n}_\Sigma)\mathbf{n}_\Sigma
$$
is the mean curvature vector, $\mathbf{n}_\Sigma$ is the unit normal, and $\mathbf{x}^\perp$ is the normal component of the position vector. Self-expanders arise naturally in the study of mean curvature flow. Indeed, $\Sigma$ is a self-expander if and only if the family of homothetic hypersurfaces
$$
\left\{\Sigma_t\right\}_{t>0}=\left\{\sqrt{t}\, \Sigma\right\}_{t>0}
$$
is a \emph{mean curvature flow} (MCF), that is, a solution to the flow
$$
\left(\frac{\partial \mathbf{x}}{\partial t}\right)^\perp=\mathbf{H}_{\Sigma_t}.
$$
Self-expanders  model the behavior of a MCF as it emerges from a conical singularity \cite{AIC}. They also model possible long time behavior of the flow \cite{EHAnn}.

Given a hypersurface $\Sigma\subset\mathbb{R}^{n+1}$ the \emph{Gaussian surface area of $\Sigma$} is 
$$
F[\Sigma]=(4\pi)^{-\frac{n}{2}} \int_\Sigma e^{-\frac{|\mathbf{x}|^2}{4}} \, d\mathcal{H}^n
$$
where $\mathcal{H}^n$ denotes the $n$-dimensional Hausdorff measure. In \cite{CMGen}, Colding and Minicozzi introduced a notion of entropy for hypersurfaces which is given by
$$
\lambda[\Sigma]=\sup_{\mathbf{y}\in\mathbb{R}^{n+1},\rho>0} F[\rho\Sigma+\mathbf{y}].
$$
Entropy is invariant under dilations and translations and is a natural measure of geometric complexity; see \cite{BWInvent}, \cite{BWDuke}, \cite{BWGT}, \cite{BWMRL}, \cite{BSW}, \cite{CIMW}, \cite{HershkovitsWhite}, \cite{KetoverZhou}, \cite{MramorWang}, \cite{SWang} and \cite{JZhu}. It follows from Huisken's monotonicity formula  \cite{Huisken} that entropy is non-increasing under the MCF. It is easily checked that $\lambda[\mathbb{R}^n]=1$.  Moreover,  by computations of Stone \cite{Stone},
$$
2>\lambda[\mathbb{S}^1]>\frac{3}{2}>\lambda[\mathbb{S}^2]>\cdots>\lambda[\mathbb{S}^{n}]>\lambda[\mathbb{S}^{n+1}]>\cdots\to\sqrt{2}.
$$

Given an integer $k\geq 2$, $\Sigma$ is a \emph{$C^{k}$-asymptotically conical hypersurface in $\mathbb{R}^{n+1}$} with asymptotic cone $\cC=\mathcal{C}(\Sigma)$ if $\lim_{\rho\to 0^+} \rho \Sigma=\cC$ in $C^{k}_{loc} (\mathbb{R}^{n+1}\setminus\{\mathbf{0}\})$, where $\cC$ is a $C^k$-regular cone. Let $\mathcal{L}(\Sigma)=\mathcal{L}(\cC)=\cC\cap \mathbb{S}^n$ be the \emph{link} of the asymptotic cone, and observe that $\mathcal{L}(\Sigma)$ is a $C^k$-hypersurface in $\mathbb{S}^n$. If $\Sigma$ is a $C^2$-asymptotically conical self-expander, then it follows from Huisken's monotonicity formula and the lower semi-continuity of entropy that $\lambda[\Sigma]=\lambda[\mathcal{C}(\Sigma)]$ -- see, for instance, \cite[Lemma 3.5]{BWProperness}.

Self-expanders in $\mathbb{R}^2$ have been studied in work of Ecker-Huisken \cite{EHAnn} and so we restrict attention to $n\geq 2$.  It can be readily shown, e.g., \cite{EHAnn}, that for a smooth graphical cone, $\mathcal{C}$, there is a unique self-expander asymptotic to $\mathcal{C}$. In contrast, in \cite[Section 8]{BWDegree} (cf. \cite{AIC}), we showed that there is an open subset in the space of regular cones in $\mathbb{R}^3$ so that for any cone in the subset there are at least three distinct self-expanders asymptotic to the cone -- two that are topologically annuli and one that is a pair of disks. Our main result is that this topological non-uniqueness cannot occur for self-expanders that are asymptotic to a low entropy cone.

\begin{thm} \label{MainThm}  
For $k\geq 2$ and $2\leq n \leq 6$, let $\cC$ be a $C^{k+1}$-regular cone in $\Real^{n+1}$ that satisfies
$$
\lambda[\cC]<\lambda[\mathbb{S}^{n-1}\times \Real].
$$
If $\Gamma_1, \Gamma_2$ are both $C^{k+1}$-asymptotically conical self-expanders with $\mathcal{C}(\Gamma_1)=\mathcal{C}(\Gamma_2)=\cC$, then $\Gamma_1$ and $\Gamma_2$ are $C^k$ a.c.-isotopic with fixed cone.
\end{thm}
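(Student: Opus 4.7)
The plan is to use the expander mean curvature flow (EMCF)
\[
\left(\frac{\partial \xX}{\partial t}\right)^\perp = \hH - \frac{\xX^\perp}{2},
\]
whose stationary solutions are precisely self-expanders, and which preserves the class of $C^k$-asymptotically conical hypersurfaces with prescribed asymptotic cone. The key analytic input is a low-entropy regularity statement: since entropy is non-increasing along EMCF, under the hypothesis $\lambda[\cC] < \lambda[\mathbb{S}^{n-1}\times\R]$ any tangent flow to an EMCF starting from a $C^{k+1}$-a.c.\ hypersurface with cone $\cC$ and entropy at most $\lambda[\cC]$ has entropy strictly less than $\lambda[\mathbb{S}^{n-1}\times\R]$; in dimensions $2\leq n \leq 6$ the entropy-rigidity theorems of \cite{CIMW} and \cite{BWInvent} force such a tangent flow to be a flat multiplicity-one hyperplane. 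By Brakke--White regularity the EMCF is then smooth for all positive times, and the time slices $\{\Sigma_t\}$ form a $C^k$ a.c.-isotopy with fixed cone $\cC$.

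Given this regularity, the strategy is a sweepout in initial data. I would construct two ``enveloping'' hypersurfaces $M^+, M^-$, each $C^{k+1}$-a.c.\ with cone $\cC$, of entropy at most $\lambda[\cC]$, and bounding open regions that contain $\Gamma_1\cup\Gamma_2$ strictly on the designated outer and inner sides respectively. Running EMCF from $M^\pm$ produces smooth monotone families of $C^k$ a.c.\ hypersurfaces sweeping past $\Gamma_1$ and $\Gamma_2$; by the strong maximum principle each $\Gamma_i$ is approached from each side. Tracking the $M^\pm$-flows until they first contact $\Gamma_1$, then continuously deforming the initial data $M^\pm$ so that the first-contact leaf moves from $\Gamma_1$ to $\Gamma_2$, and concatenating with the EMCF isotopies themselves, should assemble the desired a.c.-isotopy from $\Gamma_1$ to $\Gamma_2$, with intermediate hypersurfaces that are $C^k$-a.c.\ with fixed cone $\cC$ (though not in general self-expanders).

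The main obstacle is the construction of the envelopes $M^\pm$ subject to the entropy bound $\lambda[M^\pm]\le \lambda[\cC]$, which is what activates the low-entropy rigidity. Simple tubular-neighborhood or convex-hull constructions around $\Gamma_1\cup\Gamma_2$ typically raise the entropy past $\lambda[\cC]$, so one must obtain $M^\pm$ by an indirect construction: for instance as the output of an auxiliary EMCF from a barrier at spatial infinity built directly from the cone $\cC$, or via an entropy-minimization among $C^{k+1}$-a.c.\ hypersurfaces enclosing $\Gamma_1\cup\Gamma_2$ on the prescribed side. A secondary technical issue is the continuity of the EMCF in its initial data, needed so that the sweepout produces a genuine isotopy rather than merely a map between path components; this should follow from standard parabolic stability together with the uniform smoothness guaranteed by the rigidity step.
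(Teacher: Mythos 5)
Your plan contains the right ambient tools (rescaled/expander MCF, low-entropy regularity, maximum-principle sweepouts), but it is missing two ideas the paper cannot do without, and one intermediate claim is actually false as stated.

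The false claim is the entropy-rigidity step: $\lambda[\cC]<\lambda[\mathbb{S}^{n-1}\times\Real]$ does \emph{not} by itself force all tangent flows of the rescaled MCF to be flat multiplicity-one hyperplanes. Since $\lambda[\mathbb{S}^{n-1}\times\Real]=\lambda[\mathbb{S}^{n-1}]>\lambda[\mathbb{S}^n]$, the entropy hypothesis is compatible with $\lambda[\cC]\geq\lambda[\mathbb{S}^n]$, so a self-shrinking round sphere is a perfectly admissible tangent flow. What rules spheres out in the paper (Lemma~\ref{BoundCurvLem} and Proposition~\ref{LongtimeProp}) is the combination of the entropy bound with \emph{expander mean-convexity}: $E^{O}_{\Sigma_t}>0$ forces $H_\Gamma$ not to change sign on a nontrivial tangent flow, then Colding--Minicozzi classification gives a round sphere, and this is excluded because expander mean-convexity plus the asymptotic cone keeps the flow free of closed connected components. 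Your envelopes $M^\pm$ are specified only by an entropy bound; you neither establish that they are expander mean-convex nor explain why monotonicity (which you assert for the $M^\pm$-flows) should hold. Without expander mean-convexity you have neither monotonicity nor the singularity dichotomy.

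The larger structural gap is that the sweepout-in-initial-data scheme has no mechanism for crossing intermediate stable self-expanders. Since the set of stable self-expanders asymptotic to $\cC$ can be finite with more than one element, and $\Gamma_1$, $\Gamma_2$ can themselves both be stable, any flow started from an envelope simply converges (in infinite time, never ``first contacting'' anything, by the avoidance principle) to one particular stable self-expander, and small deformations of the initial envelope do not continuously move that limit past a stable basin. The paper's device for this is a genuine min-max / mountain-pass theorem (Theorem~\ref{MinMaxThm}): between two distinct strictly stable self-expanders ordered by $\preceq$ there is an \emph{unstable} self-expander, and an unstable self-expander is isotopic to both of its stable neighbors via the first-eigenfunction perturbation plus flow (Theorem~\ref{UnstableExpanderThm}). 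This, together with the existence of greatest/least elements (Theorem~\ref{GreatestThm}) and an induction on a finite ordered set, is what chains $\Gamma_1$ to $\Gamma_2$; nothing in your outline plays this role. A further omission is the degenerate (weakly stable) case, which the paper disposes of by perturbing the cone to a generic one where every stable self-expander is strictly stable (Corollary~\ref{StrictStableCor}, Theorem~\ref{StableIsotopyThm}) and then pulling the isotopy back with Lemma~\ref{IsotopyFixConeLem}.
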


Here two asymptotically conical hypersurfaces are said to be \emph{a.c.-isotopic with fixed cone} if there is an isotopy that respects the asymptotically conical behavior and fixes the asymptotic cone -- see Section \ref{IsotopySubsec} for the precise definition. In particular, $\Gamma_1$ and $\Gamma_2$ are diffeomorphic.

The dimension restriction comes from our use of the regularity theory of stable minimal hypersurfaces. In fact under, additional, possibly stronger assumptions on the entropy of the asymptotic cone, one has the same result in dimension $n\geq 7$.  In order to state this extra assumption, first let $\mathcal{RMC}_n$ denote the space of \emph{regular minimal cones} in $\Real^{n+1}$, that is $\cC\in \mathcal{RMC}_n$ if and only if it is a proper subset of $\Real^{n+1}$ and $\cC$ is a hypersurface in $\Real^{n+1}\setminus\set{\OO}$ that is invariant under dilation about $\OO$ and with vanishing mean curvature. Let $\mathcal{RMC}_n^*$ denote the set of non-flat elements of $\mathcal{RMC}_n$ -- i.e., cones with non-zero curvature somewhere. For any $\Lambda>0$, let 
$$
\mathcal{RMC}_n(\Lambda)=\set{\cC\in \mathcal{RMC}_n\colon \lambda[\cC]< \Lambda} \mbox{ and } \mathcal{RMC}_n^*(\Lambda)=\mathcal{RMC}^*_n \cap \mathcal{RMC}_n(\Lambda).
$$
Now fix a dimension $n\geq 3$ and a value $ \Lambda>1$.  Consider the following hypothesis:
\begin{equation} \label{Assump1}
\mbox{For all $3\leq l\leq n$, }\mathcal{RMC}_{l}^*(\Lambda)=\emptyset \tag{$\star_{n,\Lambda}$}.
\end{equation}
Observe that all regular minimal cones in $\mathbb{R}^2$ consist of unions of rays and so $\mathcal{RMC}^*_1=\emptyset$. Likewise, as great circles are the only closed geodesics in $\mathbb{S}^2$, $\mathcal{RMC}_2^*=\emptyset$. As a consequence of Allard's regularity theorem and a dimension reduction argument, there is always some $\Lambda>1$ so that $\rstar{n,\Lambda}$ holds. Let
$$
\Lambda_n=\sup \set{ \Lambda\in (1, 2)\colon \rstar{n,\Lambda}\mbox{ holds}}
$$
and
$$
\Lambda_n^*=\left\{ \begin{array}{cc} \lambda[\mathbb{S}^{n-1}\times \Real] & 2\leq n \leq 6 \\
\min\set{\Lambda_n, \lambda[\mathbb{S}^{n-1}\times\Real]} & n\geq 7 \end{array}.
\right.
$$
Observe that $2=\Lambda_2>\Lambda_1^*=\lambda[\mathbb{S}^1\times \Real]$ and that it follows from Marques-Neves's \cite[Theorem B]{MarquesNeves} proof of the Willmore conjecture that $2>\Lambda_3>\lambda[\mathbb{S}^2\times \Real]$ and so it is possible that $\Lambda_n^*= \lambda[\mathbb{S}^{n-1}\times \Real]$ for all $n$. However, this is still an open question when $n\geq 4$.

Using $\Lambda_n^*$, we are able to generalize Theorem \ref{MainThm} to all dimensions.

\begin{thm} \label{AuxThm}  
For any $k,n\geq 2$, let $\cC\subset \Real^{n+1}$ be a $C^{k+1}$-regular cone that satisfies
$$
\lambda[\cC]<\Lambda_n^*.
$$
If $\Gamma_1, \Gamma_2$ are both $C^{k+1}$-asymptotically conical self-expanders with $\mathcal{C}(\Gamma_1)=\mathcal{C}(\Gamma_2)=\cC$, then $\Gamma_1$ and $\Gamma_2$ are $C^k$ a.c.-isotopic with fixed cone.
\end{thm}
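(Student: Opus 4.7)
My plan is to construct a $C^k$ a.c.-isotopy from $\Gamma_1$ to $\Gamma_2$ by combining a continuous interpolation of initial data with a parabolic smoothing --- the gradient flow of the expander energy (equivalently, a rescaled mean curvature flow) --- whose long-time existence, regularity, and convergence are all forced by the entropy hypothesis $\lambda[\cC]<\Lambda_n^*$.

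First, I would build a continuous one-parameter family $\set{\Sigma_s}_{s\in[0,1]}$ of $C^{k+1}$-asymptotically conical hypersurfaces with asymptotic cone $\cC$, satisfying $\Sigma_0=\Gamma_1$ and $\Sigma_1=\Gamma_2$, with $\lambda[\Sigma_s]$ bounded strictly below $\Lambda_n^*$ uniformly in $s$. Such a path exists because both $\Gamma_i$ agree with $\cC$ up to a $C^{k+1}$ error at infinity, so they can be joined by an interior modification inside some large ball $B_R$; a careful partition-of-unity interpolation at scale $R$, with $R$ large, keeps the entropy close to $\lambda[\cC]$ along the path by lower semicontinuity of $\lambda$ under $C^{k+1}$-convergence on compact sets. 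Next, for each $s$ I would run the negative gradient flow of the expander functional $F[\,\cdot\,]$ with drift $\mathbf{x}^\perp/2$, with initial datum $\Sigma_s$.

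Under the entropy bound and the hypothesis $\rstar{n,\Lambda_n^*}$, this flow should exist for all time, remain $C^{k+1}$-asymptotically conical with fixed cone $\cC$ (the cone provides a barrier at infinity), and converge smoothly as $t\to\infty$ to a self-expander $\tilde\Sigma_s$ asymptotic to $\cC$. By construction $\tilde\Sigma_0=\Gamma_1$ and $\tilde\Sigma_1=\Gamma_2$, since self-expanders are the stationary points of this flow. Continuity of the flow in the initial data, together with the smooth manifold structure (at generic $s$) on the moduli space of self-expanders asymptotic to $\cC$ obtained by perturbation, should then allow one to promote the family $\{\tilde\Sigma_s\}$ to a bona fide $C^k$ a.c.-isotopy with fixed cone.

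The main obstacle is ruling out topology-changing singularities in the flow. By entropy monotonicity and Huisken's monotonicity formula, any singular spacetime point has a tangent flow which is a self-shrinker with entropy strictly below $\Lambda_n^*\le\lambda[\mathbb{S}^{n-1}\times\Real]$. The classification of self-shrinkers with entropy below this cylindrical threshold (via the theorem of Colding--Ilmanen--Minicozzi--White and the authors' earlier work in the series) forces such a tangent flow to be a multiplicity-one hyperplane, so the flow is in fact $\eps$-regular there; dually, any singular tangent cone at spatial infinity would give an element of $\mathcal{RMC}^*_l$ for some $3\le l\le n$, which is ruled out by $\rstar{n,\Lambda_n^*}$ for $n\ge7$ and by the Simons-type classification for $n\le6$. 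A secondary difficulty is verifying that the flow preserves the asymptotic cone along $s$; this should follow from uniqueness of the flow started from conical data combined with a pseudolocality argument applied in a large annulus $B_{2R}\setminus B_R$ with $R\to\infty$.
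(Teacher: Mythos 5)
Your strategy has a genuine gap, and it is precisely the gap that forces the paper's argument to be as intricate as it is. You propose to take a continuous path $\{\Sigma_s\}_{s\in[0,1]}$ of initial data joining $\Gamma_1$ to $\Gamma_2$, run the rescaled MCF from each $\Sigma_s$, and use the limits $\tilde\Sigma_s$ as the desired isotopy. Even granting long-time existence and subsequential convergence, the map $s\mapsto\tilde\Sigma_s$ is \emph{not} continuous in the interesting cases: when $\mathcal{E}(\cC)$ contains several distinct self-expanders the fiber $\Pi^{-1}(\mathbf{x}|_{\mathcal{L}(\cC)})$ is generically a discrete set, so the limit must jump as $s$ crosses the stable/unstable manifolds of unstable self-expanders. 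Indeed $\tilde\Sigma_0=\Gamma_1$ and $\tilde\Sigma_1=\Gamma_2$ only if the flow converges uniquely, and in between there will be threshold values of $s$ at which the flow does not converge to a single stable self-expander (or the limit changes discontinuously). You cannot ``promote'' a discontinuous family to an isotopy via the Banach-manifold structure, because that structure only yields local continuation near regular (strictly stable) points, not across a jump. Invoking genericity in $s$ makes the problem worse, not better: the non-generic $s$ are exactly where the topology-change question lives.

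A related issue is that the convergence theory you are relying on (Proposition \ref{FlowProp} in the paper) requires expander mean-convexity ($E^{O,1}_\Sigma>0$ with a quantitative decay bound), which your interpolated $\Sigma_s$ will not satisfy in general. Without monotonicity of $t\mapsto\Omega_t$ the flow need not converge to a single limit, and Lemma \ref{BoundCurvLem} (whose proof divides by $E^O_{\Sigma_t}$) is unavailable to control curvature.

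The paper avoids both problems. It never flows from an interpolated path; instead it (i) deforms every self-expander to a stable one via a perturbation by the first eigenfunction of $L_\Gamma$ followed by an expander mean-convex flow (Theorem \ref{UnstableExpanderThm}); (ii) endows $\mathcal{E}(\cC)$ with the partial order $\preceq$, establishes a greatest and a least element (Theorem \ref{GreatestThm}), and, when all stable self-expanders are strictly stable, induces on the finite set $\mathcal{P}(\Gamma)$ using the mountain-pass Theorem \ref{MinMaxThm} to produce an unstable self-expander strictly between two stable ones, to which Theorem \ref{UnstableExpanderThm} applies from both sides; and (iii) reduces the general cone to this situation by a generic perturbation of the link (Corollary \ref{StrictStableCor} and Theorem \ref{StableIsotopyThm}), using Lemma \ref{IsotopyFixConeLem} to correct the isotopy to fix the cone. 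The crucial idea you are missing is the use of unstable critical points as ``bridges'' between the basins of attraction of different stable ones; the continuity-in-$s$ heuristic cannot produce these bridges.
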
  

Next we discuss applications of Theorem \ref{AuxThm}. First we observe that Theorem \ref{AuxThm} implies that low entropy cones with disconnected link can't resolve into connected self-expanders.  

\begin{cor}\label{ConnectCor}
For $k,n\geq 2$, let $\cC\subset \Real^{n+1}$ be a $C^{k+1}$-regular cone with $\lambda[\cC]<\Lambda_n^*$. If $\mathcal{L}(\cC)$ has $m$ connected components, then any asymptotically conical self-expander $\Gamma$ with $\cC(\Gamma)=\cC$ has exactly $m$ connected components.
\end{cor}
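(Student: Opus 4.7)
The plan is to apply Theorem \ref{AuxThm} by comparing $\Gamma$ against a ``model'' $\Gamma_0$ asymptotic to $\cC$ that manifestly splits into $m$ connected components.

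\textbf{Upper bound.} I first check that any such $\Gamma$ has at most $m$ components. From the asymptotic convergence $\rho\Gamma \to \cC$ in $C^k_{loc}(\R^{n+1}\setminus\{\OO\})$, for $R$ large enough $\Gamma\cap(\R^{n+1}\setminus B_R)$ is a small normal graph over $\cC\cap(\R^{n+1}\setminus B_R)$ and therefore has exactly $m$ connected ends, one over each component $\ell_i$ of $\mathcal{L}(\cC)$. At the same time, $\Delta_\Sigma |\xX|^2/2 = n + |\xX^\perp|^2/2 \geq n > 0$ together with the strong maximum principle rules out any closed component, so every component of $\Gamma$ is non-compact and must contain at least one of these ends. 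Hence $\Gamma$ has at most $m$ components.

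\textbf{Model construction and conclusion.} For each link component $\ell_i$, let $C_i$ be the cone over $\ell_i$. Then $C_i$ is a $C^{k+1}$-regular cone with $\lambda[C_i] \leq \lambda[\cC] < \Lambda_n^*$, so the existence theory for asymptotically conical self-expanders of low entropy developed in the authors' prior work yields a $C^{k+1}$-asymptotically conical self-expander $\tilde{\Gamma}_i$ with $\cC(\tilde{\Gamma}_i)=C_i$; by the upper bound applied to $\tilde{\Gamma}_i$, each $\tilde{\Gamma}_i$ is connected. A maximum principle argument for the self-expander equation (equivalently, avoidance for the homothetic flows $\{\sqrt{t}\,\tilde{\Gamma}_i\}_{t>0}$) gives $\tilde{\Gamma}_i \cap \tilde{\Gamma}_j = \emptyset$ whenever $i \neq j$, since the asymptotic cones intersect only at $\OO$. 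Setting $\Gamma_0 = \bigsqcup_{i=1}^m \tilde{\Gamma}_i$ produces a $C^{k+1}$-asymptotically conical self-expander with $\cC(\Gamma_0)=\cC$ and exactly $m$ connected components, and Theorem \ref{AuxThm} applied to $\Gamma$ and $\Gamma_0$ gives that they are $C^k$ a.c.-isotopic with fixed cone, hence diffeomorphic, so $\Gamma$ has exactly $m$ components.

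The main obstacle is the disjointness of the pieces $\tilde{\Gamma}_i$ used to assemble $\Gamma_0$: since $C_i$ and $C_j$ share the origin, the standard avoidance principle cannot be applied naively, and one must combine the asymptotic decay of $\tilde{\Gamma}_i$ toward $C_i$ near infinity with a strong maximum principle (or a barrier argument near $\OO$) to exclude interior intersections. Granting this ingredient and the cited existence result, the corollary is an immediate consequence of Theorem \ref{AuxThm}.
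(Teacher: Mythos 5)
Your overall strategy matches the paper's: build a model self-expander asymptotic to $\cC$ with exactly $m$ components from pieces $\tilde{\Gamma}_i$ asymptotic to the sub-cones $\cC[\sigma_i]$, and then invoke Theorem \ref{AuxThm} to force $\Gamma$ to be diffeomorphic to it. The observation that each $\tilde{\Gamma}_i$ is connected (connected asymptotic link, no closed components) is the same as in the paper, and the ``upper bound'' paragraph, while correct, is redundant once Theorem \ref{AuxThm} is in hand.

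The gap is exactly where you flagged it: the claim that $\tilde{\Gamma}_i\cap\tilde{\Gamma}_j=\emptyset$. A maximum principle or avoidance argument for the homothetic flows does not close this. The strong maximum principle only forbids tangential intersections (two distinct self-expanders that touch tangentially would coincide, contradicting the distinct asymptotic cones), but it says nothing about \emph{transversal} intersections. The avoidance principle also fails: if $\tilde{\Gamma}_i\cap\tilde{\Gamma}_j\ni p$, then $\sqrt{t}\,\tilde{\Gamma}_i$ and $\sqrt{t}\,\tilde{\Gamma}_j$ intersect for all $t>0$ with the intersection points tending to $\OO$ as $t\to0^+$, which is perfectly consistent with the limiting cones meeting only at $\OO$; so no contradiction arises from avoidance, even granting a version valid for non-compact hypersurfaces with singular initial data. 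In short, there is no local argument that gives disjointness from the information you invoke.

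The paper sidesteps this entirely via the entropy hypothesis, which is the crucial missing ingredient: set $\Gamma'=\bigcup_i\tilde{\Gamma}_i$ and note that, whether or not the pieces intersect, $\Gamma'$ is a (possibly immersed) asymptotically conical self-expander with asymptotic cone $\cC$, so Huisken's monotonicity formula and lower semi-continuity of entropy give $\lambda[\Gamma']=\lambda[\cC]<\Lambda_n^*<2$. If two pieces met at a point $p$, then by the strong maximum principle they would have to meet transversally, so the associated varifold would have Gaussian density at least $2$ at $p$, forcing $\lambda[\Gamma']\geq 2$ --- a contradiction. Hence $\Gamma'$ is embedded with exactly $m$ components, and Theorem \ref{AuxThm} finishes the argument. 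If you replace your maximum-principle disjointness claim by this entropy argument, your proof is complete and coincides with the paper's.
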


\begin{rem}
Examples of Angenent-Ilmanen-Chopp \cite{AIC} and Bernstein-Wang \cite{BWDegree} show that there are many cones $\cC$ with disconnected link which flow into connected self-expanders. Numerical computations also show that there are rotationally symmetric double cones in $\Real^3$ that have entropy below $\Lambda_2^*=\lambda[\mathbb{S}^1\times \Real]$.
\end{rem}

\begin{proof}[Proof of Corollary \ref{ConnectCor}]
Let $\sigma_1, \ldots, \sigma_m$ be the connected components of $\mathcal{L}(\cC)$ and let $\cC_i=\cC[\sigma_i]$ be the corresponding cones. Observe that $\lambda[\cC_i]\leq \lambda[\cC]<\Lambda_n^*$. By a minimization procedure sketched by Ilmanen \cite{IlmanenLec} (see Ding \cite[Theorem 6.3]{Ding} for full details), a dimension reduction argument \cite[Theorem 4]{WhiteStrata} and Allard's regularity theorem \cite[Theorem 24.4]{Simon}, there is a self-expander $\Gamma_i^\prime$ asymptotic to $\cC_i$. As each $\sigma_i$ is connected and there are no closed self-expanders, each $\Gamma_i^\prime$ is connected.  Set $\Gamma^\prime=\bigcup_{i=1}^m \Gamma_i^\prime$.  Notice that $\Gamma'$ is a (possibly immersed) asymptotically conical self-expander that is asymptotic to $\cC$.  However, $\lambda[\Gamma^\prime]=\lambda[\cC]<\Lambda_n^*<2$ and so $\Gamma^\prime$ is an embedded hypersurface and, hence, has $m$ components.  Hence, Theorem \ref{AuxThm} implies $\Gamma^\prime$ and $\Gamma$ have the same number of components, proving the claim.
\end{proof}

In order to state the second application, we introduce the following notation for the links of $C^k$-regular cones with entropy bounded by $\Lambda$,
$$
\mathcal{S}^k(\Lambda)=\set{\sigma\subset \mathbb{S}^n\colon \sigma \mbox{ is a $C^k$-hypersurface with } \lambda[\mathcal{C}[\sigma]]<\Lambda}.
$$
Here $\mathcal{C}[\sigma]$ is the cone whose link is equal to $\sigma$. For $\Lambda>1$, let $\mathcal{S}^k_0(\Lambda)\subseteq \mathcal{S}^k(\Lambda)$ denote the set of all such links that are isotopic (inside $\mathcal{S}^k(\Lambda)$) to the equatorial sphere in $\mathbb{S}^n$. We prove existence and topological uniqueness results for asymptotically conical self-expanders with asymptotic link in $\mathcal{S}^{k+1}_0(\Lambda_n^*)$ for $k,n\geq 2$. When $3\leq n\leq 6$ this entails a new existence result for topologically trivial self-expanders asymptotic to small entropy cones. 

\begin{cor}\label{Uniq2Cor}
For any $k,n\geq 2$, if $\sigma\in \mathcal{S}^{k+1}_0(\Lambda_n^*)$, then there is a $C^k$-asymptotically conical self-expander $\Gamma$ with $\mathcal{L}(\Gamma)=\sigma$. Moreover, any such $\Gamma$ is $C^k$ a.c.-isotopic to $\Real^n\times \set{0}$. 	
\end{cor}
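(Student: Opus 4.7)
The plan is to split the statement into existence and isotopic uniqueness, treating existence directly and uniqueness via a continuity argument along the given isotopy of links. For \textbf{existence}, since $\lambda[\cC[\sigma]]<\Lambda_n^*\leq\lambda[\mathbb{S}^{n-1}\times\Real]<2$, I would repeat the machinery used in the proof of Corollary \ref{ConnectCor}: Ilmanen's $E$-functional minimization (together with Ding's detailed construction), White's dimension reduction, and Allard's regularity theorem produce a $C^k$-asymptotically conical embedded self-expander $\Gamma$ with $\mathcal{L}(\Gamma)=\sigma$, the strict entropy bound below $2$ forcing embeddedness.

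For \textbf{isotopy}, fix an isotopy $\set{\sigma_s}_{s\in[0,1]}\subset \mathcal{S}^{k+1}(\Lambda_n^*)$ with $\sigma_0$ the equator and $\sigma_1=\sigma$, and set
$$
A=\set{s\in[0,1]\colon \text{some self-expander } \Gamma^\prime_s \text{ with } \mathcal{L}(\Gamma^\prime_s)=\sigma_s \text{ is $C^k$ a.c.-isotopic to } \Real^n\times\set{0}}.
$$
Since $\Real^n\times\set{0}$ is the unique self-expander asymptotic to the flat cone (Ecker--Huisken), $0\in A$. I would then show $A$ is both open and closed in $[0,1]$, yielding $1\in A$. \emph{Openness at $s_0\in A$:} for each $s$ near $s_0$, the existence argument above yields a self-expander $\Gamma^\prime_s$ with link $\sigma_s$, and a smooth subsequential compactness result for $C^k$-a.c.\ self-expanders with entropy below $2$ (from the Bernstein--Wang theory) extracts a limit $\tilde\Gamma$ with $\mathcal{L}(\tilde\Gamma)=\sigma_{s_0}$ as $s\to s_0$. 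Theorem \ref{AuxThm} provides a fixed-cone a.c.-isotopy between $\tilde\Gamma$ and the witness $\Gamma^\prime_{s_0}$; and for $s$ close to $s_0$, $\Gamma^\prime_s$ is realized as a small normal graph over $\tilde\Gamma$ and hence is a.c.-isotopic to $\tilde\Gamma$ (with varying cone). Concatenating with the given a.c.-isotopy from $\Gamma^\prime_{s_0}$ to $\Real^n\times\set{0}$ shows $s\in A$. \emph{Closedness} is analogous, with the witnesses of a sequence $s_n\to s_\infty$ in $A$ playing the role of the minimizers. Once $1\in A$, Theorem \ref{AuxThm} gives a fixed-cone a.c.-isotopy from any $C^k$-a.c.\ self-expander $\Gamma$ with $\mathcal{L}(\Gamma)=\sigma$ to the witness $\Gamma^\prime_1$, and concatenation completes the proof.

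\textbf{The main obstacle} is the compactness input underpinning the continuity method: one must rule out concentration, multiplicity jumps, and formation of nontrivial minimal-cone singularities at infinity as the links $\sigma_{s_n}\to\sigma_{s_0}$ vary in $\mathcal{S}^{k+1}(\Lambda_n^*)$, and one needs smooth convergence near the limit so that the ``normal graph'' bridging step is available. The definition of $\Lambda_n^*$ is tailored precisely for this: the strict bound $\lambda<\Lambda_n^*$ packages $\rstar{n,\Lambda}$ in high dimensions (excluding the problematic non-flat regular minimal cones), while $\Lambda_n^*\leq\lambda[\mathbb{S}^{n-1}\times\Real]$ controls the possible cylindrical tangents at infinity.
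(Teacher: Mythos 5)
Your proposal is correct and establishes the result via a uniform continuity argument in all dimensions $n\geq 2$, which is a genuinely different route from the paper's. The paper splits the proof of existence of a self-expander in the right isotopy class by dimension: for $n=2$ and $n\geq 7$ it invokes the $\mathbb{Z}_2$-degree of \cite[Corollary 1.3]{BWBanach} together with the compactness results of \cite{BWProperness}, while for $3\leq n\leq 6$ it runs a continuity argument (Appendix~\ref{IsotopyTrivialApp}) whose openness step rests on Theorem~\ref{StableIsotopyThm}, using the Banach-manifold projection $\Pi_{\Gamma_0}$ as a local diffeomorphism at strictly stable points and a separate perturbation argument at weakly stable ones. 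Your proposal instead sources existence from Ilmanen/Ding minimization at each parameter and bridges consecutive parameters by the compactness of Proposition~\ref{CpctProp} plus the full strength of Theorem~\ref{AuxThm}, thereby sidestepping the strict/weak stability dichotomy entirely and treating all dimensions the same way. What each approach buys: yours is cleaner and more modular once Theorem~\ref{AuxThm} is available as a black box; the paper's keeps Appendix~\ref{IsotopyTrivialApp} logically independent of Theorem~\ref{AuxThm} and illustrates where degree theory enters. One small point worth making explicit in your write-up: Proposition~\ref{CpctProp} requires stability of the sequence (this matters for $3\leq n\leq 6$, where stability is what furnishes the curvature estimates), so in the closedness step the witnesses $\Gamma'_{s_n}$ should be replaced by the stable minimizers with the same cones --- which by Theorem~\ref{AuxThm} are a.c.-isotopic with fixed cone to the given witnesses, so nothing is lost --- before passing to a subsequential limit.
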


\begin{proof}
First of all, by the maximum principle, the only self-expanders that are asymptotic to a given hyperplane is the hyperplane itself. When $n=2$ or $n\geq 7$ the existence of at least one self-expander of the desired topological type is then an immediate consequence of \cite[Theorems 1.1 and 1.2]{BWProperness} and the existence of a $\mathbb{Z}_2$-degree \cite[Corollary 1.3]{BWBanach}. When $3\leq n \leq 6$ one uses Theorem \ref{StableIsotopyThm} to see that there is always at least one $C^k$-asymptotically conical stable self-expander $\Gamma$ with $\mathcal{C}(\Gamma)=\cC$ that is $C^k$ a.c.-isotopic to the hyperplane for any $\cC$ with $\mathcal{L}(\cC)\in \mathcal{S}^{k+1}_0(\Lambda_n^*)$ -- see Appendix \ref{IsotopyTrivialApp} for details. The topological uniqueness follows directly from Theorem \ref{AuxThm}.
\end{proof}

A final application is to the topological properties of closed hypersurfaces of small entropy. It is known by work of ourselves \cite{BWInvent} and J. Zhu \cite{JZhu} that round spheres uniquely minimize the entropy within the class of closed hypersurfaces in $\mathbb{R}^{n+1}$.  In \cite{BWDuke}, we classify all low entropy self-shrinkers in $\Real^3$ and, as a consequence, show that any closed surface in $\Real^3$ of sufficiently small entropy is isotopic, via a MCF, to the round sphere.  This argument is specific to $n=2$ as such a complete classification of self-shrinkers is not known in higher dimensions. However, using a weak flow and a topological classification of low entropy self-shrinkers in $\Real^4$, we show, in \cite{BWGT}, that any closed hypersurface in $\Real^4$ of sufficiently small entropy is diffeomorphic to $\mathbb{S}^3$.  In  \cite{BWIsotopy}, we combine Theorem \ref{AuxThm} with the weak flow of \cite{BWGT} to prove a stronger topological stability theorem. Namely, that any closed hypersurface in $\mathbb{R}^4$ with entropy less than or equal to that of the round cylinder is isotopic to the standard $\mathbb{S}^3$. That is, the $4$-dimensional smooth Schoenflies conjecture holds for closed hypersurfaces in $\mathbb{R}^4$ of low entropy.

The paper is organized as follows: In Section \ref{NotationSec} we fix the notation for the remainder of the paper and discuss background about the question under consideration. In Section \ref{UniversalBarrierSec} we construct a universal barrier which is used in later sections to show the existence of self-expanders with prescribed asymptotic cones. In Section \ref{PartialOrderSec} we introduce a natural partial order on the space of asymptotically conical self-expanders and prove the existence and uniqueness of the greatest and least elements. In Section \ref{FlowSec} we investigate properties of the MCF starting from an asymptotically conical hypersurface of low entropy that is expander mean-convex, and show that such a hypersurface is a.c.-isotopic with fixed cone, via the flow, to a stable self-expander. In Section \ref{DeformUnstableExpanderSec} we use a perturbation by the first eigenfunction of the stability operator for self-expanders together with results of the preceding section to deform any low entropy asymptotically conical unstable self-expander, in the a.c.-isotopy class and preserving the asymptotic cone, to a stable self-expander. In Section \ref{PerturbStableExpanderSec} we apply the analysis carried out in our previous work \cite{BWBanach} and results from Section \ref{FlowSec} to show that one may connect, via an a.c.-isotopy that does not move the asymptotic cones much along the path, any weakly stable self-expander to a self-expander asymptotic to a cone which is a generic perturbation of the asymptotic cone of the initial self-expander. In Section \ref{ProofMainThmSec} we complete the proof of Theorem \ref{MainThm} and Theorem \ref{AuxThm}.

\subsection*{Acknowledgements}
The first author was partially supported by the NSF Grants DMS-1609340 and DMS-1904674 and the Institute for Advanced Study with funding provided by the Charles Simonyi Endowment. The second author was partially supported by an Alfred P. Sloan Research Fellowship, the NSF Grants DMS-2018220 (formerly DMS-1811144) and DMS-2018221 (formerly DMS-1834824), the Office of the Vice Chancellor for Research and Graduate Education at University of Wisconsin-Madison with funding from the Wisconsin Alumni Research Foundation, a Vilas Early Career Investigator Award, and a von Neumann Fellowship by the Institute for Advanced Study with funding from the Z\"{u}rich Insurance Company and the NSF Grant DMS-1638352.

\section{Background and notation} \label{NotationSec}
For the reader's convenience, we recall, in  Sections \ref{NotionSubsec}-\ref{IndexSubsec}, some of the notation and background introduced in our previous works \cite{BWBanach,BWDegree}. In Section \ref{IsotopySubsec} we define an a.c.-isotopy between two asymptotically conical hypersurfaces and discuss some basic properties of a.c.-isotopies.

\subsection{Basic notions} \label{NotionSubsec}
Denote a (open) ball in $\mathbb{R}^n$ of radius $R$ and center $x$ by $B_R^n(x)$ and the closed ball by $\bar{B}^n_R(x)$. We often omit the superscript, $n$, when its value is clear from context. We also omit the center when it is the origin. Given a set $K\subseteq \Real^{n+1}$ the closure of $K$ is denoted by $\mathrm{cl}(K)$ and the \emph{$r$-tubular neighborhood} of $K$ is
$$
\mathcal{T}_r(K)=\bigcup_{p\in K} B_r(p).
$$

For an open subset $U\subseteq\mathbb{R}^{n+1}$, a \emph{(smooth) hypersurface in $U$}, $\Sigma$, is a properly embedded, codimension-one smooth submanifold of $U$. We also consider hypersurfaces of lower regularity and given an integer $k\geq 2$ and $\alpha\in [0,1)$ we define a \emph{$C^{k,\alpha}$-hypersurface in $U$} to be a properly embedded, codimension-one $C^{k,\alpha}$ submanifold of $U$. When needed, we distinguish between a point $p\in\Sigma$ and its \emph{position vector} $\mathbf{x}(p)$.

Consider the hypersurface $\mathbb{S}^n\subset\mathbb{R}^{n+1}$, the unit $n$-sphere in $\mathbb{R}^{n+1}$. For $n\geq 2$, a \emph{(smooth) hypersurface in $\mathbb{S}^n$}, $\sigma$, is a closed, embedded, codimension-one smooth submanifold of $\mathbb{S}^n$ and \emph{$C^{k,\alpha}$-hypersurfaces in $\mathbb{S}^n$} are defined likewise. Observe that $\sigma$ is a closed codimension-two submanifold of $\mathbb{R}^{n+1}$ and so we may associate to each point $p\in\sigma$ its position vector $\mathbf{x}(p)$. Clearly, $|\mathbf{x}(p)|=1$.

A \emph{cone} is a set $\cC\subset\mathbb{R}^{n+1}\setminus\{\mathbf{0}\}$ that is dilation invariant around the origin. That is, $\rho\mathcal{C}=\mathcal{C}$ for all $\rho>0$. The \emph{link} of the cone is the set $\mathcal{L}(\mathcal{C})=\cC\cap\mathbb{S}^{n}$. The cone is \emph{regular} if its link is a smooth hypersurface in $\mathbb{S}^{n}$ and \emph{$C^{k,\alpha}$-regular} if its link is a $C^{k,\alpha}$-hypersurface in $\mathbb{S}^n$. For any hypersurface $\sigma\subset\mathbb{S}^n$ the \emph{cone over $\sigma$}, $\mathcal{C}[\sigma]$, is the cone defined by 
$$
\mathcal{C}[\sigma]=\left\{\rho p\colon p\in\sigma, \rho>0\right\}\subset\mathbb{R}^{n+1}\setminus\{\mathbf{0}\}.
$$
Clearly, $\mathcal{L}(\mathcal{C}[\sigma])=\sigma$. 

\subsection{Function spaces} \label{FunctionSubsec}
Let $\Sigma$ be a properly embedded, $C^{k,\alpha}$ submanifold of an open subset $U\subseteq\mathbb{R}^{n+1}$. There is a natural Riemannian metric, $g_\Sigma$, on $\Sigma$ of class $C^{k-1,\alpha}$ induced from the Euclidean one. As we always take $k\geq 2$, the Christoffel symbols of this metric, in appropriate coordinates, are well-defined and of regularity $C^{k-2,\alpha}$. Let $\nabla_\Sigma$ be the covariant derivative on $\Sigma$. Denote by $d_\Sigma$ the geodesic distance on $\Sigma$ and by $B^\Sigma_\rho(p)$ the (open) geodesic ball in $\Sigma$ of radius $\rho$ and center $p\in\Sigma$. For $\rho$ small enough so that $B_{\rho}^\Sigma(p)$ is strictly geodesically convex and $q\in B^\Sigma_\rho(p)$, denote by $\tau^\Sigma_{p,q}$ the parallel transport along the unique minimizing geodesic in $B^\Sigma_\rho(p)$ from $p$ to $q$. 

Throughout the rest of this section, let $\Omega$ be a domain in $\Sigma$, and let $l$ be an integer in $[0,k]$, $\gamma\in (0,1)$ and $d\in\mathbb{R}$. Suppose $l+\gamma\leq k+\alpha$. We first consider the following norm for functions on $\Omega$:
$$
\Vert f\Vert_{l; \Omega}=\sum_{i=0}^l \sup_{\Omega} |\nabla_\Sigma^i f|.
$$
We then let
$$
C^l(\Omega)=\left\{f\in C_{loc}^l(\Omega)\colon \Vert f\Vert_{l; \Omega}<\infty\right\}.
$$
We next define the H\"{o}lder semi-norms for functions $f$ and tensor fields $T$ on $\Omega$: 
$$
[f]_{\gamma; \Omega} =\sup_{\substack{p,q\in\Omega \\ q\in B^\Sigma_{\delta}(p)\setminus\{p\}}} \frac{|f(p)-f(q)|}{d_\Sigma(p,q)^\gamma} 
\mbox{ and } 
[T]_{\gamma; \Omega} =\sup_{\substack{p,q\in\Omega \\ q\in B^\Sigma_{\delta}(p)\setminus\{p\}}} \frac{|T(p)-(\tau^\Sigma_{p,q})^* T(q)|}{d_\Sigma(p,q)^\gamma},
$$
where $\delta=\delta(\Sigma,\Omega)>0$ so that, for all $p\in\Omega$, $B^\Sigma_\delta(p)$ is strictly geodesically convex. We further define the norm for functions on $\Omega$:
$$
\Vert f\Vert_{l, \gamma; \Omega}=\Vert f\Vert_{l; \Omega}+[\nabla_\Sigma^l f]_{\gamma; \Omega},
$$
and let 
$$
C^{l, \gamma}(\Omega)=\left\{f\in C_{loc}^{l, \gamma}(\Omega)\colon \Vert f\Vert_{l, \gamma; \Omega}<\infty\right\}.
$$

We also define the following weighted norm for functions on $\Omega$:
$$
\Vert f\Vert_{l; \Omega}^{(d)}=\sum_{i=0}^l\sup_{p\in\Omega} \left(|\mathbf{x}(p)|+1\right)^{-d+i} |\nabla_\Sigma^i f(p)|.
$$
We then let 
$$
C^{l}_d(\Omega)=\left\{f\in C^l_{loc}(\Omega)\colon \Vert f\Vert_{l; \Omega}^{(d)}<\infty\right\}.
$$
We further define the following weighted H\"{o}lder semi-norms for functions $f$ and tensor fields $T$ on $\Omega$:
\begin{align*}
[f]_{\gamma; \Omega}^{(d)} & =\sup_{\substack{p,q\in\Omega \\ q\in B^\Sigma_{\delta_p}(p)\setminus\{p\}}} \left((|\mathbf{x}(p)|+1)^{-d+\gamma}+(|\mathbf{x}(q)|+1)^{-d+\gamma}\right) \frac{|f(p)-f(q)|}{d_\Sigma(p,q)^\gamma}, \mbox{ and}, \\
[T]_{\gamma; \Omega}^{(d)} & =\sup_{\substack{p,q\in\Omega \\ q\in B^\Sigma_{\delta_p}(p)\setminus\{p\}}} \left((|\mathbf{x}(p)|+1)^{-d+\gamma}+(|\mathbf{x}(q)|+1)^{-d+\gamma}\right) \frac{|T(p)-(\tau^\Sigma_{p,q})^* T(q)|}{d_\Sigma(p,q)^\gamma},
\end{align*}
where $\eta=\eta(\Omega,\Sigma)\in \left(0,\frac{1}{4}\right)$ so that for any $p\in\Sigma$, letting $\delta_p=\eta (|\mathbf{x}(p)|+1)$, $B_{\delta_p}^\Sigma(p)$ is strictly geodesically convex. Next we define the norm for functions on $\Omega$:
$$
\Vert f\Vert_{l, \gamma; \Omega}^{(d)}=\Vert f\Vert_{l; \Omega}^{(d)}+[\nabla_\Sigma^l f]_{\gamma; \Omega}^{(d-l)},
$$
and we let
$$
C^{l,\gamma}_d(\Omega)=\left\{f\in C^{l,\gamma}_{loc}(\Omega)\colon \Vert f\Vert_{l, \gamma; \Omega}^{(d)}<\infty\right\}.
$$
We follow the convention that $C^{l,0}_{loc}=C^{l}_{loc}$, $C^{l,0}=C^l$ and $C^{l,0}_d=C^l_d$ and that $C^{0, \gamma}_{loc}=C^\gamma_{loc}$,  $C^{0,\gamma}=C^\gamma$ and $C^{0,\gamma}_d=C^\gamma_d$. The notation for the corresponding norms is abbreviated in the same fashion.

In all above definitions of various norms, we often omit the domain $\Omega$ when it is clear from context. These norms can be extended in a straightforward manner to vector-valued functions and tensor fields.  It is a standard exercise to verify that these spaces equipped with the corresponding norms are Banach spaces.

\subsection{Asymptotically conical hypersurfaces} \label{ACHSubsec}
For $k,n\geq 2$ and $\alpha\in [0,1)$, let $\mathcal{C}$ be a $C^{k,\alpha}$-regular cone in $\mathbb{R}^{n+1}$. Let $\mathbf{V}\colon\mathcal{C}\to\mathbb{R}^{n+1}$ be a homogeneous transverse section on $\cC$, that is a $C^{k,\alpha}$ vector field along $\mathcal{C}$ so that 
\begin{itemize}
\item $|\mathbf{V}|=1$;
\item $\mathbf{V}(p)$ does not lie in $T_p\mathcal{C}$;
\item $\mathbf{V}(\rho p)=\mathbf{V}(p)$ for all $\rho>0$ and $p\in\mathcal{C}$.
\end{itemize}
A $C^{k,\alpha}$-hypersurface, $\Sigma\subset\mathbb{R}^{n+1}$, is \emph{$C^{k,\alpha}_{*}$-asymptotically conical} with asymptotic cone $\mathcal{C}$ if there is a radius $R>1$ and a function $u\in C^{k,\alpha}_1(\mathcal{C}\setminus\bar{B}_R)$ with
$$
\lim_{\rho\to 0^+} \rho u(\rho^{-1} p)=0 \mbox{ in $C^k_{loc}(\mathcal{C})$}
$$
so that 
$$
\Sigma\setminus\bar{B}_{2R}\subset\set{\mathbf{x}(p)+u(p)\mathbf{V}(p)\colon p\in\mathcal{C}\setminus\bar{B}_R}\subset\Sigma
$$
When $\alpha=0$ this means that $\Sigma$ is $C^k$-asymptotically conical as defined in Section \ref{IntroSec}. As observed in \cite{BWBanach}, this definition is independent of the choice of $\mathbf{V}$. Clearly, the asymptotic cone, $\cC$, is uniquely determined by $\Sigma$ and so we denote it by $\mathcal{C}(\Sigma)$ and its link by $\mathcal{L}(\Sigma)$. Denote the space of $C^{k,\alpha}_{*}$-asymptotically conical $C^{k,\alpha}$-hypersurfaces in $\mathbb{R}^{n+1}$ by $\mathcal{ACH}^{k,\alpha}_n$. As before we denote $\mathcal{ACH}^{k}_n=\mathcal{ACH}^{k,0}_n$.

If $\Sigma\in\mathcal{ACH}^2_n$ is a self-expander, then the interior estimates for MCF (see \cite{EHInvent}) imply that for all $i\geq 0$
\begin{equation} \label{CurvExpanderEqn}
\sup_{p\in\Sigma} (1+|\mathbf{x}(p)|) |\nabla^i A_\Sigma|<\infty.
\end{equation}

\subsection{Traces at infinity} \label{TraceSubsec}
Fix an element $\Sigma\in\mathcal{ACH}_n^{k,\alpha}$ and let $\mathbf{V}$ be a homogeneous transverse section on the asymptotic cone $\mathcal{C}(\Sigma)$. If $\pi_{\mathbf{V}}$ denotes the projection to $\mathcal{C}(\Sigma)$ along $\mathbf{V}$, then $\pi_{\mathbf{V}}$ restricts to a $C^{k,\alpha}$ diffeomorphism of $\Sigma^\prime=\Sigma\setminus K$ for some compact set $K$ onto $\mathcal{C}(\Sigma)\setminus\bar{B}_R$ and denote its inverse by $\theta_{\mathbf{V};\Sigma^\prime}$. Let $l\geq 0$ be an integer and $\gamma\in [0,1)$ such that $l+\gamma\leq k+\alpha$. 

A map $\mathbf{f}\in C_{loc}^{l,\gamma}(\Sigma; \mathbb{R}^M)$ is \emph{asymptotically homogeneous of degree $d$} if 
$$
\lim_{\rho\to 0^+} \rho^d \mathbf{f}\circ\theta_{\mathbf{V};\Sigma^\prime}(\rho^{-1} p)=\mathbf{g}(p) \mbox{ in $C_{loc}^{l,\gamma}(\mathcal{C}(\Sigma);\mathbb{R}^M)$}
$$
where $\mathbf{g}$ is homogeneous of degree $d$, i.e., $\rho^d\mathbf{g}(\rho^{-1} p)=\mathbf{g}(p)$ for all $\rho>0$ and $p\in\mathcal{C}(\Sigma)$. For such $\mathbf{f}$ we define the \emph{trace at infinity of $\mathbf{f}$} by
$$
\mathrm{tr}_\infty^d[\mathbf{f}]=\mathbf{g}|_{\mathcal{L}(\Sigma)}\in C^{l,\gamma}(\mathcal{L}(\Sigma);\mathbb{R}^M).
$$
Whether $\mathbf{f}$ is asymptotically homogeneous of degree $d$ and the definition of $\mathrm{tr}_{\infty}^d$ are independent of the choice of $\mathbf{V}$. Clearly, $\mathbf{x}|_{\Sigma}$ is asymptotically homogeneous of degree one and $\mathrm{tr}_\infty^{1}[\mathbf{x}|_\Sigma]=\mathbf{x}|_{\mathcal{L}(\Sigma)}$.

We next define the space
$$
C_{d,\mathrm{H}}^{l,\gamma}(\Sigma; \mathbb{R}^M)=\left\{\mathbf{f}\in C_{d}^{l,\gamma}(\Sigma; \mathbb{R}^M)\colon \mbox{$\mathbf{f}$ is asymptotically homogeneous of degree $d$}\right\}.
$$
One can check that $C_{d,\mathrm{H}}^{l,\gamma}(\Sigma; \mathbb{R}^M)$ is a closed subspace of $C_d^{l,\gamma}(\Sigma; \mathbb{R}^M)$ and the map 
$$
\mathrm{tr}_{\infty}^d\colon C_{d,\mathrm{H}}^{l,\gamma}(\Sigma; \mathbb{R}^M)\to C^{l,\gamma}(\mathcal{L}(\Sigma); \mathbb{R}^M)
$$
is a bounded linear map. We further define the set $C^{l,\gamma}_{d,0}(\Sigma;\mathbb{R}^M)\subset C_{d,\mathrm{H}}^{l,\gamma}(\Sigma; \mathbb{R}^M)$ to be the kernel of $\mathrm{tr}_\infty^d$.

\subsection{Asymptotically conical embeddings} \label{ACESubsec}
Fix an element $\Sigma\in \mathcal{ACH}^{k,\alpha}_n$. Given a map $\varphi\colon\mathcal{L}(\Sigma)\to\mathbb{R}^{n+1}$, the \emph{homogeneous extension of degree one of $\varphi$} is the map $\mathscr{E}^\mathrm{H}_1[\varphi]\colon\mathcal{C}(\Sigma)\to\mathbb{R}^{n+1}$ defined by 
$$
\mathscr{E}^\mathrm{H}_1[\varphi](p)=|\mathbf{x}(p)| \varphi(|\mathbf{x}(p)|^{-1}p).
$$
We define the space of $C^{k,\alpha}_{*}$-asymptotically conical embeddings of $\Sigma$ into $\mathbb{R}^{n+1}$ to be
$$
\mathcal{ACH}^{k,\alpha}_n(\Sigma)=\left\{\mathbf{f}\in C_{1}^{k,\alpha}\cap C^k_{1,\mathrm{H}}(\Sigma; \mathbb{R}^{n+1})\colon\mbox{$\mathbf{f}$ and $\mathscr{E}^\mathrm{H}_1[\mathrm{tr}_\infty^1[\mathbf{f}]]$ are embeddings}\right\}.
$$
Clearly, $\mathcal{ACH}^{k,\alpha}_n(\Sigma)$ is an open subset of the Banach space $C^{k,\alpha}_{1}\cap C^{k}_{1,\mathrm{H}}(\Sigma; \mathbb{R}^{n+1})$ with the $C^{k,\alpha}_1$ norm. For $\mathbf{f}\in\mathcal{ACH}^{k,\alpha}_n(\Sigma)$, as $\mathscr{E}^\mathrm{H}_1[\mathrm{tr}_\infty^1[\mathbf{f}]]$ is a $C^{k,\alpha}$ embedding that is homogeneous of degree one, it parameterizes the $C^{k,\alpha}$-regular cone $\mathcal{C}(\mathbf{f}(\Sigma))$ -- see  \cite[Proposition 3.3]{BWBanach}.

\subsection{Morse index} \label{IndexSubsec}
We recall the notion of index and nullity for asymptotically conical self-expanders and relate these integers to certain other spectral invariants. First observe that the self-expander equation \eqref{ExpanderEqn} is the Euler-Lagrangian equation for the formally defined functional
$$
E[\Sigma]=\int_\Sigma e^{\frac{|\mathbf{x}|^2}{4}} \, d\mathcal{H}^n.
$$
For a self-expander $\Sigma$, if $\{\Phi_s(\Sigma)\}_{|s|<\epsilon}$ is a compactly supported variation of $\Sigma$ such that ${\frac{d\Phi_s}{ds}\vline}_{s=0}=u\mathbf{n}_\Sigma$, then, by a computation in \cite[Section 4]{BWBanach},
$$
{\frac{d^2}{ds^2}\vline}_{s=0} E[\Phi_s(\Sigma)]=\int_\Sigma \left(|\nabla_\Sigma u|^2+\left(\frac{1}{2}-|A_\Sigma|^2\right)u^2\right) e^{\frac{|\mathbf{x}|^2}{4}} \, d\mathcal{H}^n.
$$
Denote by $Q_\Sigma[u]$ the integral on the right side of the above equation. We define the \emph{(Morse) index} of a self-expander, $\Sigma$, to be
$$
\mathrm{ind}(\Sigma)=\sup \set{\dim V\colon V\subset C^{2}_c(\Sigma) \mbox{ so that } Q_{\Sigma}[u]<0, \forall u\in V\backslash \set{0}}.
$$

In \cite{BWDegree} we introduced a weighted inner product for functions on $\Sigma$,
$$
B_\Sigma[u,v]=\int_\Sigma uv e^{\frac{|\mathbf{x}|^2}{4}} \, d\mathcal{H}^n.
$$
We further showed, in Section 4 of \cite{BWDegree}, that if $\Sigma$ is an asymptotically conical self-expander, then there is a self-adjoint (with respect to $B_\Sigma$) operator 
$$
L_\Sigma=\Delta_\Sigma+\frac{1}{2}\mathbf{x}\cdot\nabla_\Sigma+|A_\Sigma|^2-\frac{1}{2}
$$
so that $Q_\Sigma[u]=-B_\Sigma[u,L_\Sigma u]$ for any functions $u\in C^2_c(\Sigma)$. The operator $L_\Sigma$ has a discrete spectrum with a finite spectral bottom. Thus, $\mathrm{ind}(\Sigma)$ equals the number of negative eigenvalues (counted with multiplicities) of $-L_\Sigma$, and in particular, it is finite.

We also define the \emph{nullity} of an asymptotically conical self-expander $\Sigma$, $\mathrm{null}(\Sigma)$ to be the dimension of the kernel of $L_\Sigma$,
$$
\mathcal{K}_\Sigma=\set{\kappa\in C^2_{loc}\cap C^0_{1,0}(\Sigma)\colon L_\Sigma\kappa=0}.
$$

We call a self-expander \emph{stable} if it has index $0$, and \emph{unstable} otherwise. Moreover, if a stable self-expander has nullity $0$, then we call the self-expander \emph{strictly stable}; otherwise, it is called \emph{weakly stable}.

\subsection{Isotopy} \label{IsotopySubsec}
Two elements $\Sigma_1, \Sigma_2\in \mathcal{ACH}_n^{k,\alpha}$ are \emph{$C^{k,\alpha}$ a.c.-isotopic} if there is a continuous map
$$
\mathbf{F}\colon [0,1]\to \mathcal{ACH}^{k,\alpha}_n(\Sigma_1)
$$
which satisfies $\mathbf{F}(0)=\mathbf{x}|_{\Sigma_1}$ and $\mathbf{F}(1)=\mathbf{f}_1$ with $\mathbf{f}_1(\Sigma_1)=\Sigma_2$. We call $\mathbf{F}$ a \emph{$C^{k,\alpha}$ a.c.-isotopy} between $\Sigma_1$ and $\Sigma_2$. 

An a.c.-isotopy $\mathbf{F}$ between $\Sigma_1$ and $\Sigma_2$, \emph{fixes the asymptotic cone} if 
$$
\frac{\mathrm{tr}_\infty^1[\mathbf{F}(t)]}{|\mathrm{tr}_\infty^1[\mathbf{F}(t)]|}=\mathbf{x}|_{\mathcal{L}(\Sigma_1)} \mbox{ for all $t\in [0,1]$}.
$$ 
If there is an isotopy fixing the asymptotic cone between $\Sigma_1$ and $\Sigma_2$, then we say $\Sigma_1$ and $\Sigma_2$ are \emph{a.c.-isotopic with fixed cone}.

We will use the following lemma repeatedly:

\begin{lem}\label{IsotopyLem}
Let $k,n\geq 2$ and $\alpha\in [0,1)$. If $\Sigma\in \mathcal{ACH}^{k,\alpha}_n$, then there is an $\epsilon_0=\epsilon_0(\Sigma)$ so that if $\mathbf{f}\in \mathcal{ACH}^{k,\alpha}_n(\Sigma)$ satisfies $\Vert \mathbf{f}-\mathbf{x}|_{\Sigma}\Vert^{(1)}_{1}<\epsilon_0$, then the map $\mathbf{F}\colon [0,1]\to\mathcal{ACH}^{k,\alpha}_n(\Sigma)$ defined by $\mathbf{F}(t)=(1-t)\mathbf{x}|_\Sigma+t\mathbf{f}$, provides a $C^{k,\alpha}$ a.c.-isotopy between $\Sigma$ and $\mathbf{f}(\Sigma)$.
\end{lem}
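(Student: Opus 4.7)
The plan is to show that the straight-line path $\mathbf{F}(t)=(1-t)\mathbf{x}|_\Sigma+t\mathbf{f}$ takes values in the open subset $\mathcal{ACH}^{k,\alpha}_n(\Sigma)$ of the Banach space $C^{k,\alpha}_1\cap C^k_{1,\mathrm{H}}(\Sigma;\mathbb{R}^{n+1})$ for all $t\in[0,1]$. Continuity of $t\mapsto\mathbf{F}(t)$ in the $C^{k,\alpha}_1$ norm is immediate from the affine dependence on $t$, and since the ambient Banach space is closed under convex combinations, $\mathbf{F}(t)$ always belongs to it. By linearity of the trace at infinity, $\mathrm{tr}_\infty^1[\mathbf{F}(t)]=\mathbf{x}|_{\mathcal{L}(\Sigma)}+t\,\mathrm{tr}_\infty^1[\mathbf{h}]$ with $\mathbf{h}=\mathbf{f}-\mathbf{x}|_\Sigma$. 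Thus what remains is to verify, for $\epsilon_0$ small enough depending on $\Sigma$, that both $\mathbf{F}(t)$ and the cone parameterization $\mathscr{E}^\mathrm{H}_1[\mathrm{tr}_\infty^1[\mathbf{F}(t)]]$ are embeddings.

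The hypothesis $\|\mathbf{h}\|_1^{(1)}<\epsilon_0$ yields the pointwise bounds $|\mathbf{h}(p)|\le \epsilon_0(|\mathbf{x}(p)|+1)$ and $|\nabla_\Sigma\mathbf{h}(p)|\le \epsilon_0$. From these, immersion is elementary: $d\mathbf{F}(t)|_{T_p\Sigma}$ equals the inclusion plus the operator $t\nabla_\Sigma\mathbf{h}$ of norm at most $t\epsilon_0$, and hence remains a linear injection with image transverse to $\mathbf{n}_\Sigma(p)$ whenever $\epsilon_0<1$. Properness follows from the lower bound $|\mathbf{F}(t)(p)|\ge (1-\epsilon_0)|\mathbf{x}(p)|-\epsilon_0$. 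For local injectivity one invokes the uniformly weighted geodesic-convexity radius $\delta_p=\eta(|\mathbf{x}(p)|+1)$ introduced in Section \ref{FunctionSubsec}: on each $B^\Sigma_{\delta_p}(p)$, $\mathbf{F}(t)$ is a $C^1$-perturbation of the embedded inclusion by a map of Lipschitz constant at most $\epsilon_0$, and so is itself bilipschitz for $\epsilon_0$ small.

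The main obstacle is ruling out self-intersections across larger scales. I would argue by contradiction: suppose there exist $t_j\in[0,1]$ and $p_j\ne q_j$ with $\mathbf{F}(t_j)(p_j)=\mathbf{F}(t_j)(q_j)$ and $d_\Sigma(p_j,q_j)\ge \delta_{p_j}$, and split into the case $|\mathbf{x}(p_j)|$ bounded, where one extracts a convergent subsequence and contradicts the bilipschitz property on small balls together with the properness and embeddedness of $\Sigma$, and the case $|\mathbf{x}(p_j)|\to\infty$. In the latter one rescales by $|\mathbf{x}(p_j)|^{-1}$; since $\|\cdot\|_1^{(1)}$-smallness is scale-invariant, the rescaled maps on $|\mathbf{x}(p_j)|^{-1}\Sigma$ converge locally to a $C^1$-small perturbation of the inclusion of the asymptotic cone $\mathcal{C}(\Sigma)$, and the embeddedness of $\mathcal{C}(\Sigma)$ (guaranteed because $\mathbf{f}\in\mathcal{ACH}^{k,\alpha}_n(\Sigma)$ implies $\mathscr{E}^\mathrm{H}_1[\mathrm{tr}_\infty^1[\mathbf{f}]]$ and hence $\mathscr{E}^\mathrm{H}_1[\mathbf{x}|_{\mathcal{L}(\Sigma)}]$ embed) forces the rescaled $p_j,q_j$ to coincide in the limit, contradicting their separation.

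The analogous analysis for $\mathscr{E}^\mathrm{H}_1[\mathrm{tr}_\infty^1[\mathbf{F}(t)]]$ is strictly easier, as $\mathcal{L}(\Sigma)$ is compact and no rescaling is needed: the boundedness of $\mathrm{tr}_\infty^1\colon C^{k,\alpha}_{1,\mathrm{H}}\to C^{k,\alpha}$ controls $\|\mathrm{tr}_\infty^1[\mathbf{h}]\|_1$ by a constant multiple of $\|\mathbf{h}\|_1^{(1)}$, and the same immersion/properness/injectivity scheme applies to the cone parameterization directly. The substantive difficulty is the injectivity-at-infinity step, which reduces to the scale-invariance of the $\|\cdot\|_1^{(1)}$ norm and the fact that the embedded asymptotic cone provides the correct limiting object for blow-down.
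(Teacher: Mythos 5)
Your argument is correct in spirit and fills in the details behind the paper's one-line invocation of the implicit function theorem: both proofs rest on the fact that $\mathcal{ACH}^{k,\alpha}_n(\Sigma)$ is open in the (weaker) $C^1_1$ topology near $\mathbf{x}|_\Sigma$, so that $C^1_1$-closeness of $\mathbf{f}$ to $\mathbf{x}|_\Sigma$ propagates along the entire line segment. Your immersion, properness, and local-injectivity steps are exactly what one would feed into a tubular-neighborhood / implicit-function-theorem argument, and your blow-down to the cone is the natural way to establish the required uniformity at infinity.

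One small point worth tightening: as phrased, your contradiction argument fixes a single $\mathbf{f}$ and $\epsilon_0$ and produces sequences $(t_j,p_j,q_j)$ with $d_\Sigma(p_j,q_j)\ge\delta_{p_j}$; but for a fixed $\epsilon_0$ there need not be any contradiction in the bounded case, since a small-but-fixed $C^1$ perturbation can fail to be injective. The clean way to organize this is to first establish the quantitative embeddedness constant
\[
c_\Sigma \;=\; \inf\set{\frac{|\mathbf{x}(p)-\mathbf{x}(q)|}{\max(|\mathbf{x}(p)|,|\mathbf{x}(q)|)+1}\colon p,q\in\Sigma,\; d_\Sigma(p,q)\ge \delta_p}\;>\;0,
\]
where positivity on compact regions follows from properness/embeddedness and positivity at infinity follows from the blow-down to $\mathcal{C}(\Sigma)$ (together with compactness of $\mathcal{L}(\Sigma)$), and then take $\epsilon_0$ small relative to $c_\Sigma$. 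The pointwise bound $|\mathbf{F}(t)(p)-\mathbf{F}(t)(q)|\ge |\mathbf{x}(p)-\mathbf{x}(q)|-2\epsilon_0(\max(|\mathbf{x}(p)|,|\mathbf{x}(q)|)+1)$ then gives global injectivity directly, without a two-case argument by contradiction. This is really the same compactness that your blow-down is exploiting, just front-loaded so that $\epsilon_0$ depends only on $\Sigma$, as the lemma requires.
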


\begin{proof}
The result follows from the implicit function theorem.
\end{proof}

We will also need the following perturbation result which says that any a.c.-isotopy that does not move the asymptotic cones ``too much" along the path can be approximated by an a.c.-isotopy with fixed asymptotic cone.

\begin{lem} \label{IsotopyFixConeLem}
For $k,n\geq 2$ and $\alpha\in [0,1)$, let $\Sigma\in\mathcal{ACH}^{k,\alpha}_n$ and $\varphi\in C^{k,\alpha}(\mathcal{L}(\Sigma);\mathbb{R}^{n+1})$ so that $\mathscr{E}^\mathrm{H}_1[\varphi]$ is an embedding. There is a $\delta_0=\delta_0(\Sigma,\varphi)>0$ and $C_0=C_0(\Sigma)>0$ so that if $\mathbf{F}\colon [0,1]\to\mathcal{ACH}^{k,\alpha}(\Sigma)$ is continuous and, for all $t\in [0,1]$,
$$
\Vert\mathrm{tr}^1_\infty[\mathbf{F}(t)]-\varphi\Vert_{k,\alpha}<\delta_0,
$$
then there is a continuous map $\tilde{\mathbf{F}}\colon [0,1]\to\mathcal{ACH}^{k,\alpha}_n(\Sigma)$ so that, for every $t\in [0,1]$, the following holds:
\begin{enumerate}
\item \label{IsotopyTraceItem} $\mathrm{tr}_\infty^1[\tilde{\mathbf{F}}(t)]=\varphi$;
\item \label{IsotopyErrorItem} $\Vert\tilde{\mathbf{F}}(t)-\mathbf{F}(t)\Vert^{(1)}_{k,\alpha}\leq C_0 \Vert\mathrm{tr}_\infty^1[\mathbf{F}(t)]-\varphi\Vert_{k,\alpha}$.
\end{enumerate}
\end{lem}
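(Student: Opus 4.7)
The plan is to construct a bounded linear right-inverse $\mathbf{H}$ to the trace map $\mathrm{tr}_\infty^1$, so as to correct $\mathbf{F}(t)$ by subtracting off its trace error; namely, to set
$$
\tilde{\mathbf{F}}(t):=\mathbf{F}(t)-\mathbf{H}\bigl[\mathrm{tr}^1_\infty[\mathbf{F}(t)]-\varphi\bigr].
$$
Given such an $\mathbf{H}$ with $\Vert \mathbf{H}[\psi]\Vert^{(1)}_{k,\alpha}\leq C_0\Vert\psi\Vert_{k,\alpha}$ and $\mathrm{tr}^1_\infty[\mathbf{H}[\psi]]=\psi$, conclusions \eqref{IsotopyTraceItem} and \eqref{IsotopyErrorItem} will drop out by direct inspection; the remaining tasks are continuity of $t\mapsto\tilde{\mathbf{F}}(t)$ and confirmation that it lands in $\mathcal{ACH}^{k,\alpha}_n(\Sigma)$.

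To construct $\mathbf{H}$, I would fix a homogeneous transverse section $\mathbf{V}$ on $\mathcal{C}(\Sigma)$ and an $R>1$ large enough that $\pi_{\mathbf{V}}\colon\Sigma\setminus\bar{B}_{2R}\to\mathcal{C}(\Sigma)\setminus\bar{B}_R$ is a $C^{k,\alpha}$-diffeomorphism, and then take a smooth cutoff $\chi\colon\Sigma\to[0,1]$ vanishing on $\Sigma\cap\bar{B}_{2R}$ and equal to $1$ on $\Sigma\setminus\bar{B}_{3R}$. The operator would then be
$$
\mathbf{H}[\psi](p)=\chi(p)\cdot\mathscr{E}^{\mathrm{H}}_1[\psi]\circ\pi_{\mathbf{V}}(p)
$$
on $\Sigma\setminus\bar{B}_{2R}$, extended by zero elsewhere. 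Exploiting the degree-one homogeneity of $\mathscr{E}^{\mathrm{H}}_1[\psi]$, the chain rule, and the comparability of $|\mathbf{x}(\pi_{\mathbf{V}}(p))|$ and $|\mathbf{x}(p)|$ for large $|\mathbf{x}(p)|$, a routine calculation shows $\mathbf{H}[\psi]\in C^{k,\alpha}_1\cap C^k_{1,\mathrm{H}}(\Sigma;\mathbb{R}^{n+1})$ with $\mathrm{tr}^1_\infty[\mathbf{H}[\psi]]=\psi$ and the claimed norm bound, with $C_0$ depending only on $\Sigma$ (and the once-and-for-all choices of $\mathbf{V}$ and $\chi$). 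Linearity of $\mathbf{H}$ is manifest.

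With $\mathbf{H}$ in hand, \eqref{IsotopyTraceItem} and \eqref{IsotopyErrorItem} follow by substitution, while continuity of $\tilde{\mathbf{F}}$ reduces to continuity of $\mathbf{F}$ together with boundedness of $\mathbf{H}$ and $\mathrm{tr}^1_\infty$. The main obstacle will be verifying that each $\tilde{\mathbf{F}}(t)$ actually lies in $\mathcal{ACH}^{k,\alpha}_n(\Sigma)$. Since $\mathscr{E}^{\mathrm{H}}_1[\mathrm{tr}^1_\infty[\tilde{\mathbf{F}}(t)]]=\mathscr{E}^{\mathrm{H}}_1[\varphi]$ is an embedding by hypothesis, the only genuine issue is the embeddedness of $\tilde{\mathbf{F}}(t)$ itself. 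For this I plan to use that $\mathcal{ACH}^{k,\alpha}_n(\Sigma)$ is open in the Banach space $C^{k,\alpha}_1\cap C^k_{1,\mathrm{H}}(\Sigma;\mathbb{R}^{n+1})$: since the image $\mathbf{F}([0,1])$ is compact in this open set, it admits a uniform $\delta_1>0$-neighborhood contained in $\mathcal{ACH}^{k,\alpha}_n(\Sigma)$, and then choosing $\delta_0<\delta_1/C_0$ and invoking \eqref{IsotopyErrorItem} forces $\tilde{\mathbf{F}}(t)\in\mathcal{ACH}^{k,\alpha}_n(\Sigma)$. The delicate point is that this compactness-based $\delta_1$ depends on $\mathbf{F}$, so in practice $\delta_0$ must be chosen with this $\mathbf{F}$-dependence, which is harmless for all intended applications.
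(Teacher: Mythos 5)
Your construction is, up to notation, identical to the paper's. The linear right-inverse $\mathbf{H}[\psi]=(\chi\circ\mathbf{x}|_\Sigma)\,\mathscr{E}^\mathrm{H}_1[\psi]\circ\pi_{\mathbf{V}}\circ\mathbf{x}|_\Sigma$ and the formula $\tilde{\mathbf{F}}(t)=\mathbf{F}(t)-\mathbf{H}\bigl[\mathrm{tr}^1_\infty[\mathbf{F}(t)]-\varphi\bigr]$ are exactly what the paper writes (there denoted $\mathbf{F}(t)+(\chi\circ\mathbf{x}|_\Sigma)\mathscr{E}_{\mathbf{V},\Sigma}[\varphi-\mathrm{tr}^1_\infty[\mathbf{F}(t)]]$), and the verification of Items \eqref{IsotopyTraceItem} and \eqref{IsotopyErrorItem}, as well as the continuity of $t\mapsto\tilde{\mathbf{F}}(t)$, follow by the same direct computation in both.

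The genuine deviation is the argument that $\tilde{\mathbf{F}}(t)\in\mathcal{ACH}^{k,\alpha}_n(\Sigma)$, and it is not a harmless one. You invoke openness of $\mathcal{ACH}^{k,\alpha}_n(\Sigma)$ and compactness of $\mathbf{F}([0,1])$ to get a uniform $\delta_1$-collar inside $\mathcal{ACH}^{k,\alpha}_n(\Sigma)$ and then require $\delta_0<\delta_1/C_0$; but $\delta_1$, and hence your $\delta_0$, depends on the particular path $\mathbf{F}$. The lemma, however, asserts $\delta_0=\delta_0(\Sigma,\varphi)$, and that uniformity is actually used downstream: in the proof of the main theorems, the neighborhoods $\mathcal{V}_{\Gamma_i}$ (and with them the approximating cone $\varphi$, and only then the path $\mathbf{F}$) are shrunk so that the trace hypothesis $\Vert\mathrm{tr}^1_\infty[\mathbf{F}(t)]-\varphi\Vert_{k,\alpha}<\delta_0$ holds; if $\delta_0$ could only be fixed after $\mathbf{F}$ was in hand, that step would be circular. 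The paper avoids the issue by writing, on $\Sigma\setminus B_{2R}$,
\[
\tilde{\mathbf{F}}(t)=\mathscr{E}_{\mathbf{V},\Sigma}[\varphi]+\bigl(\mathbf{F}(t)-\mathscr{E}_{\mathbf{V},\Sigma}[\varphi]\bigr)+\bigl(\tilde{\mathbf{F}}(t)-\mathbf{F}(t)\bigr),
\]
noting $\tilde{\mathbf{F}}(t)=\mathbf{F}(t)$ on $\bar{B}_{2R}$, and controlling both error terms by $O(\delta_0)$ in weighted norm (the first via the compactness/continuity of $\mathbf{F}$ used only to choose $R$, the second via Item \eqref{IsotopyErrorItem}). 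Because the reference embedding $\mathscr{E}_{\mathbf{V},\Sigma}[\varphi]$ parameterizes a cone, its embeddedness margin in $C^{k,\alpha}_1$ on $\Sigma\setminus B_R$ is scale-invariant, i.e., independent of $R$ and hence of $\mathbf{F}$; this is what lets the paper take $\delta_0$ depending only on $(\Sigma,\varphi)$. You should replace the compactness argument by this near-cone decomposition to obtain the $\mathbf{F}$-free $\delta_0$ the statement claims.
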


\begin{proof}
Let $\mathbf{V}$ be a homogeneous transverse section on $\mathcal{C}(\Sigma)$ and let $\pi_\mathbf{V}$ be the projection of an open neighborhood, $U$, of $\mathcal{C}(\Sigma)$ along $\mathbf{V}$. Define $\mathscr{E}_{\mathbf{V},\Sigma}[\varphi]=\mathscr{E}^\mathrm{H}_1[\varphi]\circ\pi_\mathbf{V}\circ\mathbf{x}|_\Sigma$. There is an $R_\Sigma>1$ so $\Sigma\setminus B_{R_\Sigma}\subset U$ and so $\mathscr{E}_{\mathbf{V},\Sigma}$ is well-defined on $\Sigma\setminus B_{R_\Sigma}$. As $\mathbf{F}$ is continuous and $[0,1]$ is compact, there is an $R>R_\Sigma+1$ and $C=C(\Sigma)>0$ so that, for every $t\in [0,1]$,
\begin{equation} \label{UniformAsympEqn}
\Vert\mathbf{F}(t)-\mathscr{E}_{\mathbf{V},\Sigma}[\varphi]\Vert_{k,\alpha; \Sigma\setminus B_R}^{(1)}<C\delta_0.
\end{equation}
Let $\chi\colon\mathbb{R}^{n+1}\to [0,1]$ be a smooth cut-off function so that $\chi\equiv 1$ outside $B_{4R}$, $\chi\equiv 0$ in $B_{2R}$ and $|D\chi|<2R^{-1}$. Define
$$
\tilde{\mathbf{F}}(t)=\mathbf{F}(t)+(\chi\circ\mathbf{x}|_\Sigma)\mathscr{E}_{\mathbf{V},\Sigma}[\varphi-\mathrm{tr}_\infty^1[\mathbf{F}(t)]].
$$
It is straightforward to verify that $\tilde{\mathbf{F}}(t)\in C^{k,\alpha}_1\cap C^k_{1,\mathrm{H}}(\Sigma;\mathbb{R}^{n+1})$ and Items \eqref{IsotopyTraceItem} and \eqref{IsotopyErrorItem} hold with an appropriate choice of $C_0$. It remains only to show $\tilde{\mathbf{F}}(t)\in\mathcal{ACH}^{k,\alpha}_n(\Sigma)$. To see this one observes that $\tilde{\mathbf{F}}(t)=\mathbf{F}(t)$ on $\Sigma\cap \bar{B}_{2R}$ while, on $\Sigma\setminus B_{2R}$,
$$
\tilde{\mathbf{F}}(t) =\mathscr{E}_{\mathbf{V},\Sigma}[\varphi]+(\mathbf{F}(t)-\mathscr{E}_{\mathbf{V},\Sigma}[\varphi])+ (\tilde{\mathbf{F}}(t)-\mathbf{F}(t)).
$$
Hence, by choosing $\delta_0$ sufficiently small and invoking \eqref{UniformAsympEqn} and Item \eqref{IsotopyErrorItem}, it follows from the implicit function theorem that $\tilde{\mathbf{F}}(t)\in\mathcal{ACH}^{k,\alpha}_n(\Sigma)$, finishing the proof.
\end{proof}

\section{Universal barrier} \label{UniversalBarrierSec}
We prove the following existence of a universal barrier for self-expanders adapted to any $C^2$-regular cone.  In what follows it is helpful to consider the map
$$
\Psi_{\cC}\colon \cC\times \Real\to \Real^{n+1}
$$
associated to a $C^2$-regular cone $\cC$ that is given by
$$
\Psi_{\cC}(p,t)= \cos(t) \mathbf{x}(p)+\sin(t) |\mathbf{x}(p)|\mathbf{n}_{\cC}(p)
$$
where $\mathbf{n}_{\cC}$ is a choice of unit normal on $\cC$. Observe $|\Psi_\cC(p,t)|=|\mathbf{x}(p)|$. As $\mathcal{L}(\cC)$ is of class $C^2$ and  compact, it follows that there is an $\epsilon=\epsilon(\cC)>0$ so that if 
$$
V_{\epsilon, R}(\cC)=\set{(p,t)\in (\cC\backslash \bar{B}_R)\times \Real\colon |t|<\epsilon},
$$
then, for any $R\geq 0$, $\Psi_{\cC}|_{V_{\epsilon,R}(\cC)}$ is a $C^1$ diffeomorphism onto its image. When $R=0$ we simply write $V_\epsilon(\cC)$. 

\begin{prop}\label{UniversalBarrierProp}
For $n\geq 2$, let $\cC\subset \Real^{n+1}$ be a $C^{2}$-regular cone. There exists an open domain $\mathcal{B}(\cC)\subset \Real^{n+1}$, constants $N_0=N_0(\cC)>0$ and $R_0=R_0(\cC)>1+N_0$, and a continuous function $\rho_{\cC}\colon (R_0,\infty)\to \Real^+$ with the following properties:
\begin{enumerate}
\item \label{UnivSubsetItem} $\cC\cup B_{1}\subset \Real^{n+1}\setminus \mathcal{B}(\cC)$;
\item \label{UnivTubItem} For all $R\geq R_0$, $\Real^{n+1}\backslash \left(\mathcal{B}(\cC)\cup \bar{B}_R\right) \subset \mathcal{T}_{N_0 R^{-1}} (\cC)$;
\item \label{UnivRegNbhdItem} If  $V(\cC)=\set{(p,t)\in (\cC\backslash \bar{B}_{R_0}) \times\mathbb{R}\colon |t|\leq \rho_{\cC}(|\mathbf{x}(p)|)}$,
then 
$$
\Psi_{\cC}(V(\cC))=\Real^{n+1}\backslash \left(\mathcal{B}(\cC)\cup \bar{B}_{R_0}\right)
$$
and $\Psi_{\cC}|_{V(\cC)}$ is a $C^1$ diffeomorphism onto its image;
\item \label{UnivCpctBarrItem} If $V$ is an integral $n$-varifold in $\Real^{n+1}$ with compact support and $V$ is $E$-stationary in $\mathcal{B}(\cC)$, then $\spt(V)\cap \mathcal{B}(\cC)=\emptyset$;
\item \label{UnivBarrItem} If $\Sigma \subset \Real^{n+1}$ is an asymptotically conical self-expander with asymptotic cone $\cC$, then $\Sigma\cap \mathcal{B}(\cC)=\emptyset$.
\end{enumerate}
\end{prop}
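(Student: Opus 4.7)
The plan is to take $\mathcal{B}(\cC)$ as the complement of a conical tubular neighborhood of $\cC$ together with a closed ball $\bar{B}_{R_0}$, and to verify the barrier properties by computing the expander operator on the boundary of this neighborhood and invoking the strong maximum principle. Since $\mathcal{L}(\cC)$ is compact and $C^2$, the map $\Psi_\cC$ is a $C^1$ diffeomorphism on $V_{\epsilon_1}(\cC)$ for some $\epsilon_1=\epsilon_1(\cC)\in(0,\epsilon(\cC))$. I set $\rho_\cC(r)=N_0/r^2$ for a constant $N_0=N_0(\cC)>0$ to be fixed, pick $R_0>1+N_0$ large enough that $\rho_\cC(R_0)<\epsilon_1$, define $V(\cC)$ as in Item \eqref{UnivRegNbhdItem}, and take $\mathcal{B}(\cC)=\Real^{n+1}\setminus(\Psi_\cC(V(\cC))\cup \bar B_{R_0})$. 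Items \eqref{UnivSubsetItem}--\eqref{UnivRegNbhdItem} then follow directly from the construction once one notes that $\Psi_\cC(p,t)$ has Euclidean distance of order $|t|\,|\mathbf{x}(p)|$ from $\cC$, so the angular width $N_0/|\mathbf{x}(p)|^2$ corresponds to a spatial tube of radius at most $N_0/|\mathbf{x}(p)|\leq N_0/R$ when $|\mathbf{x}(p)|\geq R$.

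The technical heart of the proof is the verification of strict expander mean-convexity of the two boundary hypersurfaces
$$
\Sigma_\pm=\Psi_\cC\bigl(\{(p,\pm\rho_\cC(|\mathbf{x}(p)|)):|\mathbf{x}(p)|\geq R_0\}\bigr)
$$
and of $\partial \bar B_{R_0}$. Each $\Sigma_\pm$ is a $C^2$-graph over $\cC$ with graph function $u_\pm(p)=\pm N_0/|\mathbf{x}(p)|+O(|\mathbf{x}(p)|^{-3})$, obtained by Taylor-expanding $\Psi_\cC$ at $t=0$. A direct calculation using the linearization of the mean curvature operator at $\cC$ (whose Jacobi contribution $\Delta_\cC u_\pm+|A_\cC|^2 u_\pm$ is of order $N_0/|\mathbf{x}|^3$) together with the cone identity $\mathbf{x}(p)\cdot \mathbf{n}_\cC(p)=0$ yields, with $\mathbf{n}_\pm$ chosen to point outward (i.e., into $\mathcal{B}(\cC)$),
$$
H_{\Sigma_\pm}+\tfrac{1}{2}\mathbf{x}\cdot \mathbf{n}_\pm \;=\; \frac{N_0\pm H_{\mathcal{L}(\cC)}}{|\mathbf{x}|}+O(|\mathbf{x}|^{-3}).
$$
Choosing $N_0>2\sup_{\mathcal{L}(\cC)}|H_{\mathcal{L}(\cC)}|$ and enlarging $R_0$ to dominate the remainder gives strict positivity on $\Sigma_\pm$. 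The sphere $\partial \bar B_{R_0}$ with outward normal satisfies the same inequality because $n/R_0+R_0/2>0$.

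Items \eqref{UnivCpctBarrItem} and \eqref{UnivBarrItem} now follow from a sliding-barrier argument. For \eqref{UnivCpctBarrItem}, if a compactly supported integral $n$-varifold $V$ is $E$-stationary in $\mathcal{B}(\cC)$ and meets $\mathcal{B}(\cC)$, then varying $N_0$ and $R_0$ in the preceding construction gives a one-parameter family of barriers sweeping $\mathcal{B}(\cC)$; at the first contact with $\spt(V)$, the strict inequality above contradicts the strong maximum principle for $E$-stationary varifolds (a weighted Solomon--White / Ilmanen maximum principle). For \eqref{UnivBarrItem}, any asymptotically conical self-expander $\Gamma$ asymptotic to $\cC$ satisfies $\Gamma\setminus\bar B_R\subset \Psi_\cC(V_\epsilon(\cC))$ for every $\epsilon>0$ once $R$ is large enough, so $\Gamma$ avoids $\mathcal{B}(\cC)$ at infinity; the classical smooth strong maximum principle applied to the sliding barriers $\Sigma_\pm$ then propagates this conclusion inward, excluding $\Gamma\cap\mathcal{B}(\cC)$ entirely.

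The main obstacle is the leading-order expander-operator computation on $\Sigma_\pm$. One must carefully track the reparametrization induced by $\Psi_\cC$, verify that the $O(|\mathbf{x}|^{-3})$ remainders are uniform in $p\in\mathcal{L}(\cC)$ (which uses only the $C^2$ regularity of $\mathcal{L}(\cC)$), and ensure the sign of the leading term is controlled by the single constant $N_0$. With this asymptotic identity in hand, Items \eqref{UnivSubsetItem}--\eqref{UnivRegNbhdItem} are geometric bookkeeping and Items \eqref{UnivCpctBarrItem}--\eqref{UnivBarrItem} reduce to standard maximum-principle sweeps.
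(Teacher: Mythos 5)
The construction you propose—taking $\mathcal{B}(\cC)$ to be the complement of a shrinking tube $\{|t|\leq N_0/r^2\}$ around $\cC$ together with a ball—produces roughly the same open set as the paper's, and Items \eqref{UnivSubsetItem}--\eqref{UnivRegNbhdItem} are indeed straightforward bookkeeping once that set is in hand. The approaches diverge sharply, however, in how the barrier Items \eqref{UnivCpctBarrItem}--\eqref{UnivBarrItem} are established, and your route has two genuine gaps.

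First, the sliding argument for Item \eqref{UnivCpctBarrItem} does not sweep $\mathcal{B}(\cC)$. Varying $N_0$ moves the leaves $\Sigma_\pm$ only through angular distance $N_0/r^2$ from $\cC$, and this angle must stay below the threshold $\epsilon_1$ for $\Psi_\cC$ to remain a diffeomorphism near $\partial B_{R_0}$. But $\mathcal{B}(\cC)$ contains points at arbitrary angular distance from $\cC$ (e.g., any point $q$ with $|q|>R_0$ whose spherical projection lies far from $\mathcal{L}(\cC)$), and no member of your barrier family ever reaches such points, so the first-touch argument cannot get started for a varifold supported there. Varying $R_0$ instead introduces the issue of corners where $\Sigma_\pm$ meets $\partial B_{R_0}$, and a touching there is not covered by Solomon--White. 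Second, for Item \eqref{UnivBarrItem} you assert that $\Gamma\setminus\bar B_R$ lies in $\Psi_\cC(V_\epsilon(\cC))$ for any $\epsilon>0$ and hence ``avoids $\mathcal{B}(\cC)$ at infinity.'' That inference is false: $C^2$-asymptotic conicality gives only $\mathrm{dist}(p,\cC)=o(|p|)$, while the tube complementary to $\mathcal{B}(\cC)$ has radius $N_0/|p|\to 0$; being within a fixed angular $\epsilon$ of $\cC$ does not put $\Gamma$ inside the shrinking tube. (Self-expanders do in fact converge exponentially fast to their cone, but that decay estimate is not something you may simply invoke here without proof, and the paper deliberately avoids needing it.)

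The paper resolves both issues by a genuinely different construction: $\mathcal{B}(\cC)$ is defined as a \emph{union} of explicit sub-level sets $E_{\mathbf{v},r_0}^\circ$ of the quadratics $f_{\mathbf{v},r_0}(\mathbf{x})=2n+|\mathbf{x}|^2-(1+r_0^2)(\mathbf{x}\cdot\mathbf{v})^2$, taken over all $\mathbf{v}\in\mathbb{S}^n$ outside a conical neighborhood of $\mathcal{L}(\cC)$. Each $E_{\mathbf{v},r_0}^\circ$ comes with its own barrier statement, proved not by sliding but by plugging the compactly supported $C^1$ vector field $\mathbf{Z}=\nabla f_{\mathbf{v},r_0}^3$ (extended by zero) into the first variation of $E$: the integrand has a definite sign inside $E_{\mathbf{v},r_0}^\circ$, forcing $\mu_V(E_{\mathbf{v},r_0}^\circ)=0$ directly, with no appeal to a touching point or to Solomon--White. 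Because $\mathcal{B}(\cC)$ is a union, this automatically covers every point, and because each barrier is a fixed algebraic set the argument works uniformly up to $\partial B_{R_0}$. For Item \eqref{UnivBarrItem}, rather than asserting decay, the paper cuts $\Sigma$ off at $\partial B_R$, notes that $\Sigma\cap\partial B_R$ misses $\mathrm{cl}(U_{\mathbf{v},\beta r_0})$ for $\beta<1$ once $R$ is large (which uses only $C^2$ conicality), applies the compact-support barrier lemma to $\Sigma\cap\bar B_R$, and lets $R\to\infty$, $\beta\to 1$. You would need to either adopt this union-of-explicit-barriers structure or supply a substantially more careful sliding scheme (and a separate decay input for self-expanders) before your Items \eqref{UnivCpctBarrItem}--\eqref{UnivBarrItem} go through.
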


In order to prove this we first introduce simple barriers modeled on one-sheeted hyperboloids -- see \cite{BWDegree} for a related construction or \cite{Ding} where rotationally symmetric solutions to \eqref{ExpanderEqn} are used instead.

To that end, consider the following family of functions depending on parameters $\mathbf{v}\in \mathbb{S}^n$ and $\eta>0$:
$$
f_{\mathbf{v}, \eta}(\mathbf{x})=2n+ |\mathbf{x}|^2-\left(1+\eta^2\right) (\mathbf{x}\cdot \mathbf{v})^2.
$$
Associated to these functions are the following family of connected closed sets
$$
E_{\mathbf{v}, \eta}=\set{\mathbf{x}\in\mathbb{R}^{n+1}\colon f_{\mathbf{v}, \eta}(\mathbf{x})\leq 0\mbox{ and } \mathbf{x}\cdot \mathbf{v}\geq 0}
$$
and their interiors
$$
E_{\mathbf{v}, \eta}^{\circ} =\mathrm{int}(E_{\mathbf{v}, \eta})=\set{\mathbf{x}\in\mathbb{R}^{n+1}\colon f_{\mathbf{v}, \eta}(\mathbf{x})< 0\mbox{ and } \mathbf{x}\cdot \mathbf{v}> 0}.
$$
Consider the connected, rotationally symmetric cone,
$$
\mathcal{C}_{\mathbf{v}, \eta}=\set{\mathbf{x}\in\mathbb{R}^{n+1}\colon |\mathbf{x}|^2= \left(1+\eta^{2}\right)(\mathbf{x}\cdot \mathbf{v})^2 \mbox{ and } \mathbf{x}\cdot \mathbf{v}> 0},
$$
that lies in the half-space $\set{\mathbf{x\cdot \mathbf{v}\geq 0}}$ and has axis parallel to $\mathbf{v}$ and cone aperture $2\tan^{-1}(\eta)$, and observe that $E_{\mathbf{v}, \eta}$ has boundary asymptotic to $\mathcal{C}_{\mathbf{v}, \eta}$. Moreover, letting
$$
U_{\mathbf{v}, \eta}=\set{\mathbf{x}\in\mathbb{R}^{n+1}\colon |\mathbf{x}|^2< \left(1+\eta^{2}\right)(\mathbf{x}\cdot \mathbf{v})^2 \mbox{ and } \mathbf{x}\cdot \mathbf{v}> 0},
$$
be the open cone that is the component of $\Real^{n+1}\backslash \mathrm{cl}(\mathcal{C}_{\mathbf{v}, \eta})$ that contains $\mathbf{v}$, one has $E_{\mathbf{v}, \eta}\subset U_{\mathbf{v}, \eta}$. By construction, $f_{\mathbf{v}, \eta}>0$ on $\set{|\mathbf{x}\cdot \mathbf{v}|^2< 2n \eta^{-2}}$ and so
$$
E_{\mathbf{v}, \eta}\cap \set{\mathbf{x}\cdot \mathbf{v}<\sqrt{2n} \eta^{-1}}=\emptyset.
$$  

First we show the following asymptotic property for $E_{\mathbf{v},\eta}$:

\begin{lem} \label{BarrierDecayLem}
Given $\eta>0$ there is a unique continuous function $\rho_\eta\colon [\sqrt{2n}\eta^{-1},\infty)\to\mathbb{R}^+$ so that, for every $\mathbf{v}\in\mathbb{S}^n$,
$$
E_{\mathbf{v},\eta}=\set{\Psi_{\mathcal{C}_{\mathbf{v},\eta}}(p,t)\colon p\in\mathcal{C}_{\mathbf{v},\eta}\setminus B_{\sqrt{2n}\eta^{-1}}, t \in [\rho_\eta(|\mathbf{x}(p)|) ,\tan^{-1}(\eta)]}
$$
where we choose $\mathbf{n}_{\mathcal{C}_{\mathbf{v},\eta}}$ so to point into $U_{\mathbf{v},\eta}$. Moreover, 
$$
\sin(\rho_\eta(r)) < 4n\eta^{-1}r^{-2}.
$$
\end{lem}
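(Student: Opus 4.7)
The approach is to exploit the $O(n)$-rotational symmetry of $E_{\mathbf{v},\eta}$ and $\mathcal{C}_{\mathbf{v},\eta}$ about their common axis $\mathbf{v}$ to reduce to a one-variable trigonometric problem in a half-plane. Set $\alpha=\tan^{-1}(\eta)$. Any $p\in\mathcal{C}_{\mathbf{v},\eta}$ with $|p|=\lambda$ can be written uniquely as $p=\lambda(\cos\alpha\,\mathbf{v}+\sin\alpha\,\mathbf{w})$ for some unit $\mathbf{w}\in\mathbf{v}^\perp$, and the unit normal to $\mathcal{C}_{\mathbf{v},\eta}$ at $p$ pointing into $U_{\mathbf{v},\eta}$ is $\mathbf{n}_{\mathcal{C}_{\mathbf{v},\eta}}(p)=\sin\alpha\,\mathbf{v}-\cos\alpha\,\mathbf{w}$. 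Substituting into the definition of $\Psi_{\mathcal{C}_{\mathbf{v},\eta}}$ and applying angle addition formulas gives
$$
\Psi_{\mathcal{C}_{\mathbf{v},\eta}}(p,t)=\lambda\bigl(\cos(\alpha-t)\,\mathbf{v}+\sin(\alpha-t)\,\mathbf{w}\bigr),
$$
so $\Psi$ preserves the radius and simply rotates $p$ within $\mathrm{span}(\mathbf{v},\mathbf{w})$ toward the axis $\mathbf{v}$. By the symmetry in $\mathbf{w}$ it suffices to work in this half-plane and then verify that the resulting $\rho_\eta$ depends only on $\lambda$ and $\eta$.

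Next I would analyze the one-variable function $g_\lambda(t):=f_{\mathbf{v},\eta}(\Psi_{\mathcal{C}_{\mathbf{v},\eta}}(p,t))$. Using $|\Psi|=\lambda$ and $\Psi\cdot\mathbf{v}=\lambda\cos(\alpha-t)$, together with the identity $\cos^2A-\cos^2B=-\sin(A+B)\sin(A-B)$ and the relation $(1+\eta^2)\cos^2\alpha=1$, this collapses to
$$
g_\lambda(t)=2n-\lambda^2(1+\eta^2)\sin(2\alpha-t)\sin t,
$$
and a direct differentiation gives $g_\lambda'(t)=-\lambda^2(1+\eta^2)\sin\bigl(2(\alpha-t)\bigr)$, which is strictly negative on $(0,\alpha)$ since $\alpha\in(0,\pi/2)$. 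Because $g_\lambda(0)=2n>0$ while $g_\lambda(\alpha)=2n-\lambda^2\eta^2$ is nonpositive precisely when $\lambda\geq\sqrt{2n}\,\eta^{-1}$, for each such $\lambda$ there is a unique $\rho_\eta(\lambda)\in(0,\alpha]$ with $g_\lambda(\rho_\eta(\lambda))=0$. This value depends only on $\lambda$ and $\eta$ by the rotational symmetry noted above, and continuity of $\rho_\eta$ follows from the implicit function theorem applied to $g_\lambda$, using $g_\lambda'(\rho_\eta(\lambda))<0$.

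With $\rho_\eta$ in hand, the claimed set equality becomes a change-of-variables statement: inside each half-plane through the axis, the strict monotonicity of $g_\lambda$ says that on the radius-$\lambda$ circle the locus $\{f_{\mathbf{v},\eta}\leq 0,\ \mathbf{x}\cdot\mathbf{v}\geq 0\}$ is precisely the arc swept out by $t\in[\rho_\eta(\lambda),\alpha]$, and rotating around $\mathbf{v}$ recovers all of $E_{\mathbf{v},\eta}$. Finally, the decay estimate is read off from the defining relation $\lambda^2(1+\eta^2)\sin(2\alpha-\rho_\eta(\lambda))\sin\rho_\eta(\lambda)=2n$: for $t\in(0,\alpha]$ we have $2\alpha-t\in[\alpha,2\alpha)$, so $\sin(2\alpha-t)\geq\min(\sin\alpha,\sin 2\alpha)$, and a short calculation using $\sin\alpha=\eta/\sqrt{1+\eta^2}$ and $\sin 2\alpha=2\eta/(1+\eta^2)$ shows this minimum is at least $\eta/(1+\eta^2)$, giving $\sin\rho_\eta(r)\leq 2n\eta^{-1}r^{-2}<4n\eta^{-1}r^{-2}$. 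The main subtle point is that the lower bound on $\sin(2\alpha-t)$ must be uniform across both the narrow-cone regime ($\eta\to 0$, $\alpha\to 0$) and the wide-cone regime ($\eta\to\infty$, $\alpha\to\pi/2$); the simultaneous comparison with both $\sin\alpha$ and $\sin 2\alpha$ is what handles this uniformly.
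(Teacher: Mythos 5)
Your argument is correct and follows essentially the same route as the paper: exploit the $O(n)$-symmetry about the axis $\mathbf{v}$ to reduce to a one-variable monotone trigonometric function and locate $\rho_\eta$ as its unique zero. The main stylistic difference is that the paper passes to spherical coordinates $(r,\tau,\omega)$ and solves for $\theta_\eta(r)$ in closed form, while you stay in the $\Psi$-parameter $t$ and apply the identity $\cos^2 A-\cos^2 B=-\sin(A+B)\sin(A-B)$ to get the factored form $g_\lambda(t)=2n-\lambda^2(1+\eta^2)\sin(2\alpha-t)\sin t$; this lets you read the decay estimate off the defining relation via a uniform lower bound on $\sin(2\alpha-t)$ rather than from the explicit formula, and in fact yields the slightly sharper constant $\sin\rho_\eta(r)\leq 2n\eta^{-1}r^{-2}$. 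One small gap to repair: for continuity of $\rho_\eta$ you invoke the implicit function theorem via $g_\lambda'(\rho_\eta(\lambda))<0$, but at the left endpoint $\lambda=\sqrt{2n}\,\eta^{-1}$ one has $\rho_\eta(\lambda)=\alpha$ and $g_\lambda'(\alpha)=-(1+\eta^2)\lambda^2\sin\bigl(2(\alpha-\alpha)\bigr)=0$, so the IFT does not apply there; one-sided continuity at that endpoint follows instead from the locally uniform convergence of $g_\lambda$ in $\lambda$ together with strict monotonicity of $g_\lambda$ (or, as in the paper, from the explicit formula for $\rho_\eta$, which your defining relation also implies after a product-to-sum rewrite).
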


\begin{proof}
Without loss of generality we assume $\mathbf{v}$ is the north pole of $\mathbb{S}^n$. Denote spherical coordinates on $\mathbb{R}^{n+1}$ by the map $\Phi\colon [0,\infty)\times [0,\pi]\times\mathbb{S}^{n-1}\to\mathbb{R}^{n+1}$ given by
$$
\Phi(r,\tau,\omega)= (r\sin(\tau)\omega,r\cos(\tau)).
$$
Define
$$
C_a=(0,\infty)\times (0,a)\times\mathbb{S}^{n-1}.
$$
Let $\epsilon=\tan^{-1}(\eta)$. As $\cC_{\mathbf{v},\eta}$ is rotationally symmetric, it is straightforward to verify that $\Phi|_{C_\epsilon}$ and $\Psi_{\mathcal{C}_{\mathbf{v},\eta}}|_{V_\epsilon(\cC_{\mathbf{v},\eta})}$ are both $C^1$ diffeomorphisms onto $U_{\mathbf{v},\eta}\setminus(\set{\mathbf{0}}\times\mathbb{R})$. Moreover,
$$
\Psi_{\mathcal{C}_{\mathbf{v},\eta}}^{-1}\circ\Phi|_{C_\epsilon}(r,\tau,\omega)=(\Phi(r,\epsilon,\omega),\epsilon-\tau).
$$
We also observe that $E_{\mathbf{v},\eta}\subset\mathbb{R}^{n+1}\setminus B_{\sqrt{2n}\eta^{-1}}$ and 
$$
E_{\mathbf{v},\eta}\cap (\set{\mathbf{0}}\times\mathbb{R})=\set{\mathbf{0}}\times [\sqrt{2n}\eta^{-1},\infty) =\Psi_{\cC_{\mathbf{v},\eta}}\left((\cC_{\mathbf{v},\eta}\setminus B_{\sqrt{2n}\eta^{-1}})\times\set{\epsilon}\right).
$$

Fix any $r\geq \sqrt{2n}\eta^{-1}$ and $\omega\in\mathbb{S}^{n-1}$. One readily evaluates
\begin{align*}
f(\tau)=f_{\mathbf{v},\eta}(\Phi(r,\tau,\omega)) & =2n+r^2-(1+\eta^2)r^2\cos^2(\tau) \\
& =2n-\eta^2r^2+r^2(1+\eta^2)\sin^2(\tau).
\end{align*}
One notices that $f(\tau)$ is strictly decreasing for $\tau\in [0,\epsilon]$ and $f(0)\leq 0$ while $f(\epsilon)>0$. Thus, there is a unique function $\theta_\eta(r)\in [0,\epsilon)$ so that if $\tau\in [0,\theta_\eta(r)]$ then $f(\tau)\leq 0$ while for $\tau\in (\theta_\eta(r),\epsilon]$ one has $f(\tau)>0$. In fact, 
$$
\theta_\eta(r)=\sin^{-1}\left(\frac{\sqrt{\eta^2-2nr^{-2}}}{\sqrt{1+\eta^2}}\right).
$$
Hence,
$$
E_{\mathbf{v},\eta}=\set{\Phi(r,\tau,\omega)\colon r\geq \sqrt{2n}\eta^{-1}, \tau\in [0,\theta_\eta(r)], \omega\in\mathbb{S}^{n-1}}.
$$

Now define 
$$
\rho_\eta(r)=\epsilon-\theta_\eta(r).
$$
As $\theta_\eta$ is continuous, so is $\rho_\eta$. Using the coordinates transformation formula between $\Phi$ and $\Psi_{\mathcal{C}_{\mathbf{v},\eta}}$ one obtains
$$
E_{\mathbf{v},\eta}=\set{\Psi_{\mathcal{C}_{\mathbf{v},\eta}}(p,t)\colon p\in \mathcal{C}_{\mathbf{v},\eta}\setminus B_{\sqrt{2n}\eta^{-1}}, t\in [\rho(|\mathbf{x}(p)|),\epsilon]}.
$$
Finally, as $\rho_\eta$ has an explicit formula, the claimed estimate can be checked directly.
\end{proof}

Next we show the following barrier property for $E_{\mathbf{v}, \eta}$:

\begin{lem}\label{CpctMaxLem}
Let $V$ be an integral $n$-varifold in $\mathbb{R}^{n+1}$.  If $V$ has compact support and is $E$-stationary in $E_{\mathbf{v}, \eta}^\circ$, then $\spt(V) \cap E_{\mathbf{v}, \eta}^\circ=\emptyset$.
\end{lem}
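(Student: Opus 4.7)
The plan is a barrier argument: use the smooth hyperboloidal level sets of $f_{\mathbf{v},\eta}$ as foliating barriers and invoke the strong maximum principle for $E$-stationary varifolds.

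First I would note that $\nabla f_{\mathbf{v},\eta}=2\mathbf{x}-2(1+\eta^2)(\mathbf{x}\cdot\mathbf{v})\mathbf{v}$ vanishes only at $\mathbf{0}$, which lies outside $\{f_{\mathbf{v},\eta}\leq 0\}$, so for each $c\leq 0$ the level set $\Sigma_c=\{f_{\mathbf{v},\eta}=c\}\cap\{\mathbf{x}\cdot\mathbf{v}>0\}$ is a smooth, connected, non-compact one-sheeted hyperboloid, and these foliate $E_{\mathbf{v},\eta}$. Suppose for contradiction that $\spt(V)\cap E_{\mathbf{v},\eta}^\circ\neq\emptyset$. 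Since $V$ has compact support, $f_{\mathbf{v},\eta}$ attains a minimum $c_0$ on the nonempty compact set $\spt(V)\cap E_{\mathbf{v},\eta}$ at some point $x_0$. If $c_0=0$ we are done, so assume $c_0<0$; then $x_0\in E_{\mathbf{v},\eta}^\circ$, and with $\mathbf{n}=\nabla f_{\mathbf{v},\eta}/|\nabla f_{\mathbf{v},\eta}|$ the unit normal to $\Sigma_{c_0}$ pointing into $\{f_{\mathbf{v},\eta}>c_0\}$, locally near $x_0$ the support $\spt(V)$ lies on the $+\mathbf{n}$ side of $\Sigma_{c_0}$ and is tangent to $\Sigma_{c_0}$ at $x_0$.

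Next I would verify that $\Sigma_{c_0}$ is a \emph{strict} expander supersolution in the direction of $\mathbf{n}$. Using $\Delta_{\Sigma_{c_0}}f_{\mathbf{v},\eta}=0$ together with the ambient identity $\Delta_\Sigma g=\Delta g-\mathrm{Hess}(g)(\mathbf{n},\mathbf{n})+\mathbf{H}_\Sigma\cdot\nabla g$ and the elementary identities $\Delta f_{\mathbf{v},\eta}=2n-2\eta^2$, $\mathrm{Hess}(f_{\mathbf{v},\eta})(\mathbf{n},\mathbf{n})=2-2(1+\eta^2)(\mathbf{n}\cdot\mathbf{v})^2$ and $\mathbf{x}\cdot\nabla f_{\mathbf{v},\eta}=2f_{\mathbf{v},\eta}-4n$, a direct computation on $\Sigma_{c_0}$ yields
\[
\bigl(\mathbf{H}_{\Sigma_{c_0}}-\tfrac{1}{2}\mathbf{x}^\perp\bigr)\cdot\mathbf{n}=\frac{2(1+\eta^2)\bigl(1-(\mathbf{n}\cdot\mathbf{v})^2\bigr)-c_0}{|\nabla f_{\mathbf{v},\eta}|}>0,
\]
where I have used $(\mathbf{n}\cdot\mathbf{v})^2\leq 1$ and $c_0<0$.

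Finally, since $V$ is $E$-stationary in $E_{\mathbf{v},\eta}^\circ$, its distributional mean curvature vector equals $\mathbf{x}^\perp/2$, so the inequality above reads $(\mathbf{H}_{\Sigma_{c_0}}-\mathbf{H}_V)\cdot\mathbf{n}>0$ along the smooth barrier. The strict-supersolution sign together with one-sided tangential contact at $x_0$ lets me apply the strong maximum principle for integral $n$-varifolds with prescribed bounded mean curvature (in the Solomon--White / White formulation) to conclude $\Sigma_{c_0}\subset\spt(V)$ in a neighborhood of $x_0$. Since every $y\in\Sigma_{c_0}\cap\spt(V)$ lies in $E_{\mathbf{v},\eta}^\circ$ and realizes the same global minimum $f_{\mathbf{v},\eta}(y)=c_0$, the strong maximum principle can be reapplied at each such $y$, so $\Sigma_{c_0}\cap\spt(V)$ is both open and closed in the connected hypersurface $\Sigma_{c_0}$. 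It must therefore equal $\Sigma_{c_0}$, contradicting the compactness of $\spt(V)$. The main technical obstacle is the strict-sign verification on the barrier, which the explicit calculation above handles; the varifold maximum-principle invocation is standard in this setting.
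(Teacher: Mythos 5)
Your barrier argument is a genuine alternative to the paper's proof, which instead plugs the explicit vector field $\mathbf{Z}=\nabla f^3_{\mathbf{v},\eta}$ (extended by zero outside $E_{\mathbf{v},\eta}$) into the first variation formula for $E$ and observes that the resulting integrand is strictly negative on $E_{\mathbf{v},\eta}^\circ$, forcing $\mu_V(E_{\mathbf{v},\eta}^\circ)=0$. The paper's route is shorter and more self-contained: it needs only the definition of $E$-stationarity, no varifold maximum principle at all. Your foliation-by-level-sets computation is correct, and in particular the identity
\[
\bigl(\mathbf{H}_{\Sigma_{c}}-\tfrac12\mathbf{x}^\perp\bigr)\cdot\mathbf{n}
=\frac{2(1+\eta^2)\bigl(1-(\mathbf{n}\cdot\mathbf{v})^2\bigr)-c}{|\nabla f_{\mathbf{v},\eta}|}
\]
(which is strictly positive for $c<0$) is the right content; note this is essentially the integrand that appears after integration by parts in the paper's first-variation computation.

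That said, the way you invoke the maximum principle is logically off. The Solomon--White strong maximum principle does \emph{not} require (and is not triggered by) the strict supersolution sign: it applies whenever the barrier $\Sigma_{c_0}$ has non-negative $E$-mean curvature toward the side containing $\spt(V)$, and its conclusion is $\Sigma_{c_0}\subset\spt(V)$ near the contact point regardless of strictness. The strictness is what you use \emph{afterwards} to reach a contradiction (e.g.\ via your open-and-closed propagation plus non-compactness of $\Sigma_{c_0}$, or more directly via a constancy-type argument showing $V$ would restrict to a multiple of $\Sigma_{c_0}$, which then cannot be $E$-stationary). Alternatively, you could invoke White's barrier principle for stationary varifolds with a \emph{strict} $C^2$ barrier, whose conclusion is simply that contact cannot occur, in which case the open/closed step is unnecessary. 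Either way the proof goes through, but you should pick one of these two formulations and state it accurately; as written, the sentence ``the strict-supersolution sign \dots lets me apply the strong maximum principle \dots to conclude $\Sigma_{c_0}\subset\spt(V)$'' conflates the hypothesis that makes the theorem apply with the ingredient that delivers the contradiction. You should also record that the Solomon--White result applies here because $E$ is a weighted area $\int e^{|\mathbf{x}|^2/4}\,d\mathcal{H}^n$, hence an even parametric elliptic functional in their sense.
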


\begin{proof}
Consider the $C^1$ vector field, $\mathbf{Z}$, defined by
$$
\mathbf{Z}(\mathbf{x})=\left\{
\begin{array}{cc}
\nabla f^3_{\mathbf{v},\eta}(\mathbf{x}) & \mbox{if $\mathbf{x}\in E_{\mathbf{v},\eta}$} \\
\mathbf{0} & \mbox{otherwise}
\end{array}
\right. .
$$
As $V$ has compact support and $\mathbf{Z}$ is supported in $E_{\mathbf{v}, \eta}$, we may plug $\mathbf{Z}$ into the first variation formula for the functional $E$. The fact that $V$ is $E$-stationary in $E_{\mathbf{v}, \eta}^\circ$ implies that 
\begin{align*}
0 & =\int \left(\mathrm{div}_{S} \mathbf{Z}+ \frac{\mathbf{x}}{2}\cdot \mathbf{Z}\right) e^{\frac{|\mathbf{x}|^2}{4}} \, d\mu_V\\
 &=\int 3f_{\mathbf{v},\eta}\left(2|\nabla_{S} f_{\mathbf{v},\eta}|^2+f^2_{\mathbf{v},\eta}-2f_{\mathbf{v},\eta}(1+\eta^2)|\nabla_{S} (\mathbf{v}\cdot\mathbf{x})|^2\right) e^{\frac{|\mathbf{x}|^2}{4}} \, d(\mu_V\lfloor E_{\mathbf{v},\eta})
\end{align*}
where $S$ is the $\mu_V$-measurable function that maps $\mathbf{x}$ to the generic tangent (hyper)plane of $\mu_V$ at $\mathbf{x}$. By construction $f_{\mathbf{v}, \eta}<0$ in $E_{\mathbf{v}, \eta}^\circ$, and so, in $E_{\mathbf{v}, \eta}^\circ$, $$3f_{\mathbf{v},\eta}\left(2|\nabla_{S} f_{\mathbf{v},\eta}|^2+f^2_{\mathbf{v},\eta}-2f_{\mathbf{v},\eta}(1+\eta^2)|\nabla_{S} (\mathbf{v}\cdot\mathbf{x})|^2\right) \leq 3f_{\mathbf{v},\eta}^3 <0
$$
and so $\mu_V(E_{\mathbf{v}, \eta}^\circ)=0$.  It follows that $\spt(V)\cap E_{\mathbf{v}, \eta}^\circ=\emptyset$.
\end{proof}

As a consequence, we have the following

\begin{lem}\label{NonCpctMaxLem}
Let $\Sigma$ be a $C^2$-asymptotically conical self-expander. If $\mathcal{C}(\Sigma)\cap U_{\mathbf{v}, \eta}=\emptyset$, then $\Sigma \cap E_{\mathbf{v}, \eta}^\circ=\emptyset$.  
\end{lem}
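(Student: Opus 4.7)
The plan is to prove $\Sigma \cap E_{\mathbf{v},\eta'}^\circ = \emptyset$ for every $0 < \eta' < \eta$ by reproducing the barrier computation of Lemma~\ref{CpctMaxLem} on the non-compact $\Sigma$, and then to pass to the limit $\eta' \nearrow \eta$. Working slightly inside $\eta$ is essential, because the compactness of $\Sigma \cap E_{\mathbf{v}, \eta'}$ that drives the argument may fail at $\eta' = \eta$ if $\mathcal{L}(\Sigma)$ happens to be tangent to the boundary of the spherical cap $\mathcal{L}(U_{\mathbf{v},\eta})$.

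First I would observe that the hypothesis reads $\mathcal{L}(\Sigma) \cap \mathcal{L}(U_{\mathbf{v},\eta}) = \emptyset$ in $\mathbb{S}^n$, and that for any $\eta' < \eta$ the closed cap $\overline{\mathcal{L}(U_{\mathbf{v},\eta'})}$ is strictly contained in the open cap $\mathcal{L}(U_{\mathbf{v},\eta})$. Hence $\mathcal{L}(\Sigma)$ and $\overline{\mathcal{L}(U_{\mathbf{v},\eta'})}$ are compact disjoint subsets of $\mathbb{S}^n$, separated by a positive angular distance $d_{\eta'} > 0$. A direct inspection shows every $\mathbf{x} \in E_{\mathbf{v},\eta'}$ satisfies $\mathbf{x}/|\mathbf{x}| \in \overline{\mathcal{L}(U_{\mathbf{v},\eta'})}$, while the asymptotically conical structure of $\Sigma$ (via the graphical description of Section~\ref{ACHSubsec} together with $\rho u(\rho^{-1} \cdot) \to 0$) forces $p/|p|$ to converge uniformly to $\mathcal{L}(\Sigma)$ as $|p| \to \infty$ along $\Sigma$. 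The triangle inequality then yields $\Sigma \cap E_{\mathbf{v},\eta'} \subset \bar{B}_{R_{\eta'}}$ for some $R_{\eta'}$, which is therefore compact by proper embeddedness.

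With this compactness in hand, I would rerun the barrier computation of Lemma~\ref{CpctMaxLem} for the test field $\mathbf{Z}_{\eta'} = \nabla f_{\mathbf{v},\eta'}^3$ extended by $\mathbf{0}$ outside $E_{\mathbf{v},\eta'}$. Its restriction to $\Sigma$ has compact support, so multiplying $\mathbf{Z}_{\eta'}$ by a smooth ambient cutoff equal to $1$ on a ball containing that support produces a compactly supported ambient test field that agrees with $\mathbf{Z}_{\eta'}$ along $\Sigma$. Testing the $E$-stationarity of $\Sigma$ against this cutoff field recovers exactly the identity used in the proof of Lemma~\ref{CpctMaxLem} (the cutoff contributes nothing), and the same pointwise bound $\mathrm{div}_S \mathbf{Z}_{\eta'} + \frac{\mathbf{x}}{2} \cdot \mathbf{Z}_{\eta'} \leq 3 f_{\mathbf{v},\eta'}^3 < 0$ on $E_{\mathbf{v},\eta'}^\circ$ forces $\Sigma \cap E_{\mathbf{v},\eta'}^\circ = \emptyset$. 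Since $f_{\mathbf{v},\eta'} \searrow f_{\mathbf{v},\eta}$ pointwise as $\eta' \nearrow \eta$ wherever $\mathbf{v}\cdot\mathbf{x} \neq 0$, one easily checks $E_{\mathbf{v},\eta}^\circ = \bigcup_{0 < \eta' < \eta} E_{\mathbf{v},\eta'}^\circ$, and the lemma follows. The main obstacle to overcome is the compactness claim in the second paragraph; the uniform decay of $p/|p|$ to $\mathcal{L}(\Sigma)$ supplied by the asymptotically conical hypothesis is the key input, and strictness of the inclusion $\eta' < \eta$ is what makes this decay sufficient.
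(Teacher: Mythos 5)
Your proposal is correct and takes essentially the same approach as the paper: reduce to the strictly smaller barriers $E_{\mathbf{v},\eta'}^\circ$ with $\eta'<\eta$ (the paper writes $\beta\eta$ with $\beta\in(0,1)$) so that they meet $\Sigma$ only in a compact set, run the first-variation computation of Lemma~\ref{CpctMaxLem}, and take the union over $\eta'$. The only cosmetic difference is that the paper truncates $\Sigma$ to $\Sigma\cap\bar B_R$ (using that $\Sigma\cap\partial B_R$ misses $E_{\mathbf{v},\beta\eta}^\circ$ for $R$ large) and applies Lemma~\ref{CpctMaxLem} as a black box, whereas you cut off the test vector field and redo the computation directly on $\Sigma$; both are minor variants of the same idea.
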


\begin{proof}
The hypotheses ensure that, for all $\beta\in (0,1)$, $\mathrm{cl}(U_{\mathbf{v}, \beta \eta})\cap \mathcal{C}(\Sigma)=\emptyset$. As $\Sigma$ is $C^2$-asymptotic to $\mathcal{C}(\Sigma)$ this means there is an $R_{\beta}>0$ so that $R>R_{\beta}$ implies $\Sigma\cap \partial B_R$ is disjoint from $\mathrm{cl}(U_{\mathbf{v}, \beta \eta})$ and hence also from $E_{\mathbf{v}, \beta \eta}^\circ$. As $\Sigma \cap \bar{B}_R$ is compact and $E$-stationary in $\Real^{n+1}\backslash \left(\Sigma\cap \partial B_R\right)$, it follows from Lemma \ref{CpctMaxLem} that $\Sigma\cap B_R$ is disjoint from $E_{\mathbf{v}, \beta \eta}^\circ$. As $R$ is arbitrary, this means $E_{\mathbf{v}, \beta \eta}^\circ\cap \Sigma =\emptyset$. Hence, using 
$$
E_{\mathbf{v},\eta}^\circ=\bigcup_{\beta\in (0,1)}E_{\mathbf{v},\beta \eta}^\circ,
$$
it follows that $\Sigma \cap E_{\mathbf{v}, \eta}^\circ= \emptyset$.
\end{proof}

\begin{proof}[Proof of Proposition \ref{UniversalBarrierProp}] 
Define
$$
C_r=\bigcup_{p\in \mathcal{L}(\cC)} U_{p,r}
$$
to be the open conical neighborhood of aperture $2\tan^{-1}(r)$ about $\cC$, and let
$$
C_{r}^c=\Real^{n+1}\backslash C_{r}.	
$$
We note that as $\mathcal{L}(\mathcal{C})$ is of class $C^2$ and compact, there is an $\epsilon=\epsilon(\mathcal{C})>0$ so that $\Psi_\mathcal{C}|_{V_\epsilon(\mathcal{C})}\colon V_\epsilon(\mathcal{C})\to C_{\tan(\epsilon)}$ is a $C^1$ diffeomorphism. Choose an $r_0$ so that $0<r_0<\min\set{\tan(\epsilon),1}$. It is straightforward to verify that
$$
C_{r_0}=\Psi_{\cC}(V_{\tan^{-1}(r_0)}(\cC))=\bigcup_{p\in \cC} B_{\frac{r_0}{\sqrt{1+r_0^2}} |\mathbf{x}(p)|}(p)
$$
and, for each $\mathbf{v}\in C_{r_0}^c\cap \mathbb{S}^n$,
$$
U_{\mathbf{v}, r_0} \cap \cC=\emptyset.
$$
	
Now let
$$
\mathcal{B}(\cC)= \bigcup_{\mathbf{v}\in C^c_{r_0}\cap \mathbb{S}^n} E_{\mathbf{v}, r_0}^{\circ}.
$$
This is the union of open sets so is open. As $E_{\mathbf{v},r_0}\subset U_{\mathbf{v}, {r_0}}$ and $U_{\mathbf{v}, {r_0}}\cap \cC=\emptyset$, $\mathcal{B}(\cC)\cap \cC=\emptyset$. Moreover, as $r_0<1$, this construction ensures that
$$
B_{\sqrt{2n}}\cap  E_{\mathbf{v}, r_0}=\emptyset
$$
and so $B_{\sqrt{2n}} \cap \mathcal{B}(\cC)=\emptyset$. As $B_1\subset B_{\sqrt{2n}}$, it follows that Item \eqref{UnivSubsetItem} holds. Item \eqref{UnivCpctBarrItem} follows directly from Lemma \ref{CpctMaxLem} and the definition of $\mathcal{B}(\cC)$. Indeed, if $\spt(V)\cap \mathcal{B}(\cC)\neq \emptyset$, then $\spt(V)\cap E_{\mathbf{v}, r_0}^\circ\neq\emptyset$ for some $\mathbf{v}$ while $V$ is $E$-stationary in $ E_{\mathbf{v}, r_0}^\circ\subset \mathcal{B}(\cC)$.  As $\spt(V)$ is compact this would contradict  Lemma \ref{CpctMaxLem}.   Item \eqref{UnivBarrItem} follows from Lemma \ref{NonCpctMaxLem} in the exact same fashion.
	
We next verify that Item \eqref{UnivTubItem} holds. To see this first observe that if $p\in \Real^{n+1}\backslash (C_{r_0}\cup B_{2n r_0^{-1}})$, then by construction, $p\in \mathcal{B}(\cC)$. As such, if we set $R_0= 2n r_0^{-1}$, then for $R\geq R_0$, if $p\in \Real^{n+1}\backslash (\mathcal{B}(\cC)\cup \bar{B}_R)$, then $p\in C_{r_0}$. Let $(q,t)\in V_{\tan^{-1}(r_0)}(\cC)$ be the pre-image of $p$ under $\Psi_\cC$. Without loss of generality assume the unit normal, $\mathbf{n}_\cC(q)$, on $\cC$ points towards $p$ and let
$$
\mathbf{v}=\frac{\Psi_\cC(q,\tan^{-1}(r_0))}{|\Psi_\cC(q,\tan^{-1}(r_0))|}\in\mathbb{S}^n.
$$
Observe that $C_{r_0}\cap \mathbb{S}^n$ is the $\tan^{-1}(r_0)$-tubular open neighborhood of $\mathcal{L}(\mathcal{C})$ in $\mathbb{S}^n$. Thus, one has $\mathbf{v}\in \partial C_{r_0}$ and as such 
$$
p\notin E_{\mathbf{v}, {r_0}}^\circ.
$$
Using the fact $\mathbf{n}_\cC(q)=\mathbf{n}_{\cC_{\mathbf{v},r_0}}(q)$ and so $\Psi_{\cC}(q,t)=\Psi_{\cC_{\mathbf{v}}, r_0}(q,t)$, it follows from Lemma \ref{BarrierDecayLem} that 
$$
0\leq \sin(t)<4nr_0^{-1} |\mathbf{x}(p)|^{-2}.
$$
Hence, by elementary trigonometry, the distance from $p$ to $\cC$ is less than $N_0 |\mathbf{x}(p)|^{-1}$ where $N_0=4nr_0^{-1}$. In particular, this shows $\mathbb{R}^{n+1}\setminus (\mathcal{B}(\cC)\cup\bar{B}_R)\subset\mathcal{T}_{N_0R^{-1}}(\cC)$.

Finally, up to increasing $R_0$ so that 
$$
N_0<\frac{\tan(\epsilon)}{2\sqrt{1+\tan(\epsilon)^2}} R_0^2,
$$
Item \eqref{UnivTubItem} ensures that $\Real^{n+1}\backslash ( \mathcal{B}(\cC)\cup \bar{B}_{R_0}) \subset C_{\tan(\epsilon)}$. Hence, if 
$$
V'(\cC)=(\Psi_\cC|_{V_\epsilon(\cC)})^{-1}(\mathbb{R}^{n+1}\setminus (\mathcal{B}(\cC)\cup \bar{B}_{R_0})),
$$
then $\Psi_\cC|_{V'(\cC)}$ is a $C^1$ diffeomorphism onto its image.

To conclude the proof we observe that, for $p\in\cC\setminus\bar{B}_{R_0}$, setting 
$$
\mathbf{v}_\pm=\frac{\Psi_\cC(p,\pm\tan^{-1}(r_0))}{|\Psi_\cC(p,\pm\tan^{-1}(r_0))|}\in\mathbb{S}^n
$$
ensures that $p_t=\Psi_\cC(p,t)\notin\mathcal{B}(\cC)$ for $|t|<\tan^{-1}(r_0)$ if and only if $p_t\notin E_{\mathbf{v}_+,r_0}^\circ\cup E_{\mathbf{v}_-,r_0}^\circ$. Indeed, if the latter holds, then Lemma \ref{BarrierDecayLem} implies $|t|\leq \rho_{r_0}(|\mathbf{x}(p)|)$ and so the distance from $p_t$ to $\cC$ is at most $\sin(\rho_{r_0}(|\mathbf{x}(p)|))|\mathbf{x}(p)|$. This implies $p_t\notin E_{\mathbf{v},r_0}^\circ$ for all $\mathbf{v}\in C_{r_0}^c\cap\mathbb{S}^n$ as otherwise, invoking Lemma \ref{BarrierDecayLem} again, one sees the distance from $p_t$ to $\cC$ is strictly larger than $\sin(\rho_{r_0}(|\mathbf{x}(p)|))|\mathbf{x}(p)|$ giving a contradiction. That is, $p_t\notin\mathcal{B}(\cC)$. The other direction is obvious by the fact $\mathbf{v}_\pm\in\partial C_{r_0}$ and the definition of $\mathcal{B}(\cC)$. Hence, setting $\rho_\cC=\rho_{r_0}$ one observes that $V(\cC)=V'(\cC)$ and the result is proved.
\end{proof}

\section{Partial ordering of hypersurfaces asymptotic to a fixed cone} \label{PartialOrderSec}
For $n,k\geq 2$ and $\alpha\in (0,1)$, fix a $C^{k,\alpha}$-regular cone $\cC\subset\mathbb{R}^{n+1}$ and let $\mathcal{H}(\cC)$ be the set of all $C^{k,\alpha}_*$-asymptotically conical $C^{k,\alpha}$-hypersurfaces with asymptotic cone $\cC$ and without any closed connected components. Let $\mathcal{E}(\cC)\subset\mathcal{H}(\cC)$ be the subset consisting of self-expanders and let $\mathcal{E}_{S}(\cC)\subseteq \mathcal{E}(\cC)$ denote the subset of stable self-expanders. 

A pair $(\omega, \sigma)$ consisting of a closed subset $\omega\subset \mathbb{S}^n$ and a smooth, possibly disconnected, hypersurface $\sigma\subset \mathbb{S}^{n}$ is a \emph{boundary link} if $\partial \omega=\sigma$. Here neither $\omega$ nor $\sigma$ are assumed to be connected. If $(\omega, \sigma)$ is a boundary link, then so is $(\mathbb{S}^n\backslash \mathrm{int}(\omega), \sigma)$. For any hypersurface, $\sigma\subset\mathbb{S}^n$, $\sigma$ may be thought of as a closed $(n-1)$-chain with $\mathbb{Z}_2$ coefficients and so one has an associated class $[\sigma]\in H_{n-1}(\mathbb{S}^n;\mathbb{Z}_2)$. As $H_{n-1}(\mathbb{S}^n;\mathbb{Z}_2)=\{0\}$, $\sigma$ is a boundary and so there is an $\omega$ so $(\omega,\sigma)$ is a boundary link. If $(\omega^\prime,\sigma)$ is also a boundary link, then both $\omega$ and $\omega'$ may be thought of as $n$-chains with $\mathbb{Z}_2$ coefficients and, as $\partial (\omega+\omega')=2\sigma=0$, $\omega+\omega'$ is a cycle. Hence, $[\omega+ \omega']\in H_{n}(\mathbb{S}^n;\mathbb{Z}_2)=\mathbb{Z}_2$ is either $0$ or $[\mathbb{S}^n]$. That is, either $\omega^\prime=\omega$ or $\omega'=\mathbb{S}^n\setminus\mathrm{int}(\omega)$.

Given a cone $\cC$ pick $\omega$ so that $(\omega,\mathcal{L}(\cC))$ is a boundary link. This choice induces a canonical unit normal on $\mathcal{L}(\cC)$ (and hence also on $\cC$) -- i.e., by choosing the outward normal to $\omega$. For each $\Sigma\in\mathcal{H}(\cC)$, let $\Omega_-(\Sigma)$ be the open subset of $\mathbb{R}^{n+1}$ so that $\partial\Omega_-(\Sigma)=\Sigma$ and 
$$
\lim_{\rho\to 0^+}\mathrm{cl}\left(\rho\Omega_-(\Sigma)\right)\cap \mathbb{S}^n=\omega \mbox{ as closed sets}.
$$
Such $\Omega_-(\Sigma)$ is well-defined by the hypotheses on $\Sigma$ and the discussions in the previous paragraph. Denote by $\Omega_+(\Sigma)=\mathbb{R}^{n+1}\setminus \mathrm{cl}(\Omega_-(\Sigma))$. We orient $\Sigma$ so that its unit normal points into $\Omega_+(\Sigma)$ and out of $\Omega_-(\Sigma)$.

We introduce a relation on $\mathcal{H}(\cC)$ as follows: If $\Sigma_1, \Sigma_2 \in \mathcal{H}(\cC)$, then
$$
\Sigma_1 \preceq \Sigma_2 \mbox{ provided } \Omega_+(\Sigma_2) \subseteq \Omega_+(\Sigma_1).
$$
Notice $\Sigma\preceq \Sigma$ for any $\Sigma\in \mathcal{H}(\cC)$. The construction ensures that if $\Sigma_1\preceq\Sigma_2$ and $\Sigma_2\preceq\Sigma_3$, then $\Sigma_1\preceq\Sigma_3$. That is, $(\mathcal{H}(\cC), \preceq)$ is a partially ordered set. Clearly, $(\mathcal{E}(\cC),\preceq)$ and $(\mathcal{E}_S(\cC),\preceq)$ are also partially ordered sets.  Recall that an element $x$ of a partially ordered set $(X, \leq) $ is \emph{maximal} if, for all $y\in X$, $x\leq y\Rightarrow x=y$ and is \emph{minimal} if, for all $y\in X$, $y\leq x\Rightarrow x=y$. The element $x$ is the \emph{greatest} element of $(X, \leq)$ if $y\leq x$ for all $y\in X$ and is the \emph{least} element of $(X,\leq)$ if $x\leq y$ for all $y\in X$.   Clearly, the greatest (least) element is the unique maximal (minimal) element.

We use the universal barrier of Section \ref{UniversalBarrierSec} with a minimization procedure sketched by Ilmanen \cite{IlmanenLec} -- see Ding \cite[Theorem 6.3]{Ding} for full details -- to show that $(\mathcal{E}(\cC), \preceq)$ admits a greatest and least element.

\begin{thm}\label{GreatestThm}
For $k\geq 2$ and $\alpha\in (0,1)$, let $\cC$ be a $C^{k,\alpha}$-regular cone in $\Real^{n+1}$ and assume either $2\leq n\leq 6$ or $\lambda[\cC]< \Lambda_n$. There are unique elements $\Gamma_G, \Gamma_L\in \mathcal{E}_S(\cC)$ so that, for all $\Gamma\in \mathcal{E}(\cC)$, $\Gamma_{L}\preceq\Gamma \preceq \Gamma_G$. 
\end{thm}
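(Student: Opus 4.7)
Plan: I would construct $\Gamma_G$ as the boundary of an $E$-minimizing Caccioppoli set obtained via the universal barrier of Proposition \ref{UniversalBarrierProp}, in the spirit of the Ilmanen--Ding minimization procedure sketched in \cite{IlmanenLec} and \cite[Theorem 6.3]{Ding}. The least element $\Gamma_L$ is built by the same recipe after swapping the bounding disk $\omega$ of $\LL(\cC)$ with $\mathbb{S}^n \setminus \mathrm{int}(\omega)$.

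Construction. Set $\Omega^* = \bigcup_{\Gamma \in \mathcal{E}(\cC)} \Omega_-(\Gamma)$, understood to be empty if $\mathcal{E}(\cC) = \emptyset$, and let $\mathcal{B}_-(\cC)$ be the portion of $\mathcal{B}(\cC)$ lying on the $\mathbb{S}^n \setminus \mathrm{int}(\omega)$-side of $\cC$ outside a large ball. Consider the class $\mathcal{V}$ of open sets $V \subseteq \Real^{n+1}$ of locally finite perimeter that contain $\Omega^*$, are disjoint from $\mathcal{B}_-(\cC)$, and asymptotically coincide with the open cone over $\omega$. Along a minimizing sequence for $E[\partial^* V]$, BV-compactness together with the confinement $\partial V \subseteq \Real^{n+1} \setminus \mathcal{B}(\cC)$ yields a minimizer $V^* \in \mathcal{V}$. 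Item \eqref{UnivTubItem} of Proposition \ref{UniversalBarrierProp} forces $\partial V^* \subseteq \mathcal{T}_{N_0 R^{-1}}(\cC)$ outside $\bar{B}_R$, and the interior curvature estimate \eqref{CurvExpanderEqn} upgrades this to $C^{k,\alpha}_*$-asymptotic convergence to $\cC$.

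Regularity and stability. By first variation, the regular part of $\partial^* V^*$ satisfies the self-expander equation \eqref{ExpanderEqn}. Standard regularity theory for stable minimal hypersurfaces in the conformally Euclidean Gaussian metric, combined with dimension reduction, rules out singularities: for $2 \leq n \leq 6$ this is Simons' theorem, while for $n \geq 7$ any singular tangent minimal cone would have entropy at most $\lambda[\cC] < \Lambda_n$, which is forbidden by the hypothesis $\rstar{n,\Lambda_n}$. For stability, free (obstacle-respecting) variations give $Q_{\Gamma_G}[u] \geq 0$ on the free portion of $\Gamma_G$. At any touching point $p \in \partial V^* \cap \partial \Omega^*$, the strong maximum principle for the elliptic expander equation forces $\partial V^*$ to coincide with some $\Gamma \in \mathcal{E}(\cC)$ in a neighborhood of $p$; if that $\Gamma$ were unstable, one could perturb outward by its negative eigenfunction to build a competitor in $\mathcal{V}$ of strictly smaller $E$, contradicting minimality. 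Thus $\Gamma_G := \partial V^*$ is smooth and stable, so $\Gamma_G \in \mathcal{E}_S(\cC)$.

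Maximality and uniqueness. By construction $\Omega_-(\Gamma) \subseteq \Omega^* \subseteq V^* = \Omega_-(\Gamma_G)$ for every $\Gamma \in \mathcal{E}(\cC)$, which is precisely $\Gamma \preceq \Gamma_G$. If $\Gamma_G' \in \mathcal{E}_S(\cC)$ were another greatest element, mutual domination forces $\Omega_-(\Gamma_G) = \Omega_-(\Gamma_G')$, and antisymmetry of $\preceq$ on $\mathcal{H}(\cC)$ yields $\Gamma_G = \Gamma_G'$. The main obstacle I foresee is controlling the regularity and two-sided stability of the minimizer at touching points with the obstacle $\partial \Omega^*$; this is where the entropy hypothesis $\lambda[\cC] < \Lambda_n$ (or the dimension restriction $n \leq 6$) enters crucially, via dimension reduction, to exclude singular tangent cones.
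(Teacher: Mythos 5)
The core existence step of your proposal takes a genuinely different route from the paper --- you set up a one-sided $E$-minimization with the set $\Omega^*=\bigcup_{\Gamma\in\mathcal{E}(\cC)}\Omega_-(\Gamma)$ as an \emph{obstacle}, whereas the paper never minimizes against $\Omega^*$; it shows \emph{directly} that $\partial U_-$ (their name for $\partial \Omega^*$) is a smooth stable self-expander by a finite iteration that exploits Proposition \ref{StableOrderProp} (stable expanders sandwiched above and below any pair), Proposition \ref{CpctProp} (compactness of $\mathcal{E}_S(\cC)$), the strong maximum principle, and the fact that the number of components of any $\Gamma\in\mathcal{E}(\cC)$ is bounded by the number of components $M$ of $\mathcal{L}(\cC)$.

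The gap in your route is exactly the step you flag as ``the main obstacle,'' and it is not merely a technical worry. To run regularity and your Solomon--White-style contact argument, you need $\partial\Omega^*$ to be, near any contact point $p\in\partial V^*\cap\partial\Omega^*$, a single smooth piece of some $\Gamma\in\mathcal{E}(\cC)$. This is not known a priori: $\partial\Omega^*$ is the upper envelope of a possibly infinite family of self-expanders (indeed there exist cones with uncountably many self-expanders asymptotic to them), and a priori it is not even a hypersurface at $p$, let alone locally coincident with one of the $\Gamma$'s. Your appeal to ``the strong maximum principle for the elliptic expander equation'' presupposes precisely the local smoothness and single-sheeted structure of $\partial\Omega^*$ that is the actual content of the theorem. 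Without that, the obstacle problem you pose has an obstacle of unknown regularity, and the standard obstacle regularity and first/second variation inequalities at contact points cannot be invoked. You would essentially have to re-derive the paper's iterative compactness argument to justify the contact-point structure, at which point the minimization step becomes superfluous. Two smaller issues: you do not separately establish $\mathcal{E}(\cC)\neq\emptyset$ (needed so that $\Gamma_G\in\mathcal{E}_S(\cC)$ is not vacuous --- the paper does this via the Ilmanen/Ding construction at the start of the proof), and the assertion that ``BV-compactness'' suffices needs the weighted/conical adaptations in Ding's Theorem~6.3 since the density $e^{|\mathbf{x}|^2/4}$ is unbounded. The uniqueness and maximality arguments are fine and match the paper.
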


We will need several auxiliary results to prove this. First a standard regularity result:

\begin{lem}\label{RegLem}
For $k\geq 2$ and $\alpha\in (0,1)$, let $\cC$ be a $C^{k,\alpha}$-regular cone in $\Real^{n+1}$ and assume either $2\leq n\leq 6$ or $\lambda[\cC]< \Lambda_n$. If $V$ is an $E$-stationary integral varifold with tangent cone at infinity equal to $\cC$ and the singular set, $\mathrm{sing}(V)$, has Hausdorff dimension at most $n-7$, then $V=V_\Sigma$ for an element $\Sigma\in\mathcal{E}(\cC)$.
\end{lem}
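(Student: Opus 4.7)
The plan is to first upgrade the partial regularity of $V$ to full smoothness, then identify $V$ with the multiplicity-one varifold of a smooth self-expander $\Sigma$, and finally verify the asymptotic regularity placing $\Sigma$ in $\mathcal{E}(\cC)$. In the range $2\leq n\leq 6$, the hypothesis $\dim_\mathcal{H}\mathrm{sing}(V)\leq n-7<0$ already forces $\mathrm{sing}(V)=\emptyset$. For $n\geq 7$ with $\lambda[\cC]<\Lambda_n$, suppose to the contrary that $p\in\mathrm{sing}(V)$; then any tangent varifold $\mathbf{C}$ to $V$ at $p$ is a subsequential limit of rescalings $\rho(V-p)$ with $\rho\to\infty$, and since the generalized mean curvature of $V$ equals $\mathbf{x}^\perp/2$, the rescaled mean curvatures tend to zero, so $\mathbf{C}$ is an integral area-stationary cone, singular at $\mathbf{0}$. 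Huisken's monotonicity together with (semi-)continuity of Gaussian density yields $\lambda[V]=\lambda[\cC]$ (as in \cite[Lemma 3.5]{BWProperness}), and upper semi-continuity of entropy under blow-ups gives $\lambda[\mathbf{C}]\leq\lambda[\cC]$. Federer's dimension reduction, valid because $\dim_\mathcal{H}\mathrm{sing}(V)\leq n-7$, then iteratively produces a non-flat stationary cone $\mathbf{C}^*\subset\mathbb{R}^{l+1}$ that is smooth away from the origin with $\lambda[\mathbf{C}^*]\leq\lambda[\cC]$; Allard's theorem rules out a flat limit, and $\mathcal{RMC}_1^*=\mathcal{RMC}_2^*=\emptyset$ forces $l\geq 3$. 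Picking $\Lambda\in(\lambda[\cC],\Lambda_n)$ then places $\mathbf{C}^*\in\mathcal{RMC}_l^*(\Lambda)$ for some $3\leq l\leq n$, contradicting $\rstar{n,\Lambda}$.

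Given $\mathrm{sing}(V)=\emptyset$, elliptic regularity for the quasilinear expander equation makes $\Sigma:=\mathrm{spt}(V)$ a smooth hypersurface, and unit multiplicity of the tangent cone at infinity propagates to $V$ via constancy of density on each connected component, so $V=V_\Sigma$. A closed component would admit a maximum $p$ of $|\mathbf{x}|^2$ at which
\[
0\geq \tfrac{1}{2}\Delta_\Sigma|\mathbf{x}|^2=n+\mathbf{x}\cdot\mathbf{H}_\Sigma=n+\tfrac{1}{2}|\mathbf{x}^\perp|^2>0,
\]
so closed components are excluded. Finally, to place $\Sigma$ in $\mathcal{ACH}^{k,\alpha}_n$ with $\mathcal{C}(\Sigma)=\cC$, Allard's theorem applied to rescalings at infinity writes $\Sigma\setminus\bar B_{2R}$ as the graph $\{p+u(p)\mathbf{V}(p):p\in\cC\setminus\bar B_R\}$ of a $C^{1,\alpha}$ function for a homogeneous transverse section $\mathbf{V}$; the graphical expander equation is quasilinear elliptic with coefficients of $C^{k-1,\alpha}$ regularity inherited from $\cC$, so Schauder bootstrapping together with the interior estimates \eqref{CurvExpanderEqn} upgrades $u$ to $C^{k,\alpha}_1$ with $\lim_{\rho\to 0^+}\rho u(\rho^{-1}\cdot)=0$ in $C^k_{loc}(\cC)$, as required.

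The main obstacle is the dimension reduction step when $n\geq 7$: one must check that tangent cones of $V$ at finite points are genuinely area-stationary rather than $E$-stationary, control entropy along the blow-up sequence, and exclude both flat terminal cones (via Allard) and low-dimensional ones; the hypothesis $\lambda[\cC]<\Lambda_n$ is used precisely to convert a non-flat regular minimal cone in some dimension $3\leq l\leq n$ into a contradiction with $\rstar{n,\Lambda}$.
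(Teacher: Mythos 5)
Your proof follows essentially the same route as the paper's: partial regularity is upgraded to full smoothness via dimension reduction, Allard's theorem, and the hypothesis $\rstar{n,\Lambda}$, and the asymptotic structure of the resulting smooth self-expander is then established. The paper is simply more terse, citing \cite[Theorem 4]{WhiteStrata} for the dimension reduction step and \cite[Proposition 3.3]{BWProperness} for the $C^{k,\alpha}_*$-asymptotics, where you carry out by hand the observations that tangent cones at finite points are area-stationary, the Schauder bootstrapping at infinity, and the exclusion of closed components.
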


\begin{proof}
First observe that there is a self-expander $\Sigma\subset\mathbb{R}^{n+1}$ so $V=V_\Sigma$. When $2\leq n\leq 6$ this follows from our hypothesis on the singular set of $V$. When $n\geq 7$, by Huisken's monotonicity formula $\lambda[V]\leq\lambda[\cC]$ and the claim then follows from standard dimension reduction arguments \cite[Theorem 4]{WhiteStrata}, Allard's regularity theorem \cite[Theorem 24.2]{Simon} and the hypothesis that \eqref{Assump1} holds. Next, by \cite[Proposition 3.3]{BWProperness}, $\Sigma$ is $C^{k,\alpha}_*$-asymptotic to $\cC$. That is, $\Sigma\in\mathcal{E}(\cC)$ completing the proof. 
\end{proof}

A key property of the partial order is that there are always elements of $\mathcal{E}_S(\cC)$ that lie above and below any pair of elements of $\mathcal{E}(\cC)$.

\begin{prop}\label{StableOrderProp}
For $k\geq 2$ and $\alpha\in (0,1)$, let $\cC$ be a $C^{k,\alpha}$-regular cone in $\Real^{n+1}$ and assume either $2\leq n\leq 6$ or $\lambda[\cC]< \Lambda_n$. For any two $\Gamma_1,\Gamma_2\in \mathcal{E}(\cC)$ there are $\Gamma_\pm \in \mathcal{E}_S(\cC)$ with $\Gamma_-\preceq \Gamma_i \preceq \Gamma_+$ for $i=1,2$.  Moreover, one of the following three situations occurs:
\begin{enumerate}
\item $\Gamma_-\preceq \Gamma_1\preceq \Gamma_2\preceq \Gamma_+$.
\item $\Gamma_- \preceq \Gamma_2\preceq \Gamma_1\preceq \Gamma_+$.
\item $\Gamma_\pm \neq \Gamma_1$ and $\Gamma_\pm \neq \Gamma_2$.
\end{enumerate}  
\end{prop}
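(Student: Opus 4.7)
The plan is to construct $\Gamma_\pm$ as $E$-minimizers in the open regions lying above and below both $\Gamma_1$ and $\Gamma_2$, applying the minimization scheme of Ilmanen \cite{IlmanenLec}/Ding \cite[Theorem 6.3]{Ding} already invoked in the proof of Corollary \ref{ConnectCor}. The universal barrier $\mathcal{B}(\cC)$ from Proposition \ref{UniversalBarrierProp} will cap the competitors near infinity, while $\Gamma_1$ and $\Gamma_2$ themselves serve as inner obstacles. The three cases of the moreover statement will emerge naturally by examining when the minimizer $\Gamma_+$ (respectively, $\Gamma_-$) happens to coincide with one of $\Gamma_1,\Gamma_2$.

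Concretely, set $W_+ = \Omega_+(\Gamma_1) \cap \Omega_+(\Gamma_2)$. By Item \ref{UnivBarrItem} of Proposition \ref{UniversalBarrierProp}, $\Gamma_1 \cup \Gamma_2$ is disjoint from $\mathcal{B}(\cC)$, so the inner obstacles are genuinely separated from the outer barrier; Items \ref{UnivTubItem}--\ref{UnivRegNbhdItem} describe the complement of $\mathcal{B}(\cC)$ outside a large ball as a smooth tube over $\cC$ whose width decays like $O(R^{-1})$. I minimize $E$ over Caccioppoli sets contained in $W_+ \cap \bar{B}_R \setminus \mathcal{B}(\cC)$ with boundary trace inherited from $\cC$, and then let $R \to \infty$; the decay in Item \ref{UnivTubItem} gives the mass bound needed for compactness as $R \to \infty$. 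The resulting $E$-stationary integral $n$-varifold $V_+$ is supported in $\mathrm{cl}(W_+)$, has tangent cone at infinity equal to $\cC$, and has entropy at most $\lambda[\cC]$ by Huisken's monotonicity. Lemma \ref{RegLem}, under either $2\leq n\leq 6$ or $\lambda[\cC] < \Lambda_n$, upgrades $V_+$ to a smooth self-expander $\Gamma_+ \in \mathcal{E}(\cC)$, and the inclusion $\Gamma_+ \subset \mathrm{cl}(W_+)$ translates to $\Omega_+(\Gamma_+) \subseteq \Omega_+(\Gamma_i)$, i.e., $\Gamma_i \preceq \Gamma_+$ for $i=1,2$. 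The construction of $\Gamma_-$ is entirely symmetric, using $W_- = \Omega_-(\Gamma_1) \cap \Omega_-(\Gamma_2)$.

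For the moreover statement, I examine whether $\Gamma_\pm \in \{\Gamma_1,\Gamma_2\}$. If $\Gamma_+ = \Gamma_1$, then $\Gamma_2 \preceq \Gamma_+ = \Gamma_1$, and combined with $\Gamma_- \preceq \Gamma_2$ this places us in case (2). The three other possible coincidences $\Gamma_+ = \Gamma_2$, $\Gamma_- = \Gamma_1$, $\Gamma_- = \Gamma_2$ are handled symmetrically and each lands in (1) or (2). In the absence of any such coincidence, we are in case (3).

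The principal technical obstacle is verifying that the produced $\Gamma_\pm$ are genuinely stable, i.e., belong to $\mathcal{E}_S(\cC)$ and not merely $\mathcal{E}(\cC)$. The constrained minimization only directly yields one-sided stability relative to the obstacles $\Gamma_1,\Gamma_2$: on components of $\Gamma_\pm$ lying strictly inside $W_\pm$, full two-sided stability is immediate from the unconstrained minimization; on components that touch an obstacle $\Gamma_i$, the strong maximum principle for $E$-stationary hypersurfaces forces coincidence with an entire component of $\Gamma_i$, after which two-sided stability of $L_{\Gamma_\pm}$ must be extracted by either a perturbation/obstacle-removal argument or a careful second-variation analysis exploiting that $\Gamma_i$ is already $E$-stationary. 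Beyond this, the only further essential use of the hypothesis of the proposition is through Lemma \ref{RegLem}, which provides the regularity needed to promote the minimizing varifold to a smooth self-expander.
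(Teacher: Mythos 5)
Your plan is the same in spirit as the paper's — minimize $E$ in the region $\mathrm{cl}(\Omega_\pm(\Gamma_1))\cap\mathrm{cl}(\Omega_\pm(\Gamma_2))$, cap competitors with the universal barrier, pass $R\to\infty$, and apply Lemma~\ref{RegLem}. The cases (1)--(3) are also read off exactly as you describe. But two essential steps are missing, one of which you flag and one you do not.

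First, nothing in your argument prevents the $R\to\infty$ limit from being the zero current. Compactness (from the mass bound and the tube decay) gives a limit, but not a \emph{nonzero} one, and there is no boundary data at infinity surviving the limit to force nontriviality. The paper resolves this by fixing the boundary explicitly: the curve $\gamma^R_\pm=\Psi_\cC(R\mathcal{L}(\cC),\pm(\rho_\cC(R)-\epsilon_R))$ sits at the edge of $\mathcal{B}(\cC)$ inside $\Omega_\pm(\Gamma_1)\cap\Omega_\pm(\Gamma_2)$, and one minimizes integral currents in $\bar B_{2R}$ spanning $\gamma^R_\pm$. Because $\cC\setminus B_{2R_0}$ is a deformation retract of $\Real^{n+1}\setminus(\mathcal{B}(\cC)\cup B_{2R_0})$ (Item~\eqref{UnivRegNbhdItem}), $[\gamma^R_\pm]=[R\mathcal{L}(\cC)]\neq 0$ in $H_{n-2}$ of that set; since the minimizer is supported away from $\mathcal{B}(\cC)$ (Item~\eqref{UnivCpctBarrItem}), it must hit $B_{2R_0}$. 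That uniform anchor is what makes the limit nontrivial. Your Caccioppoli formulation with ``boundary trace inherited from $\cC$'' does not supply any such homological constraint.

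Second, the stability concern you raise — one-sided vs.\ two-sided stability when $\Gamma_\pm$ touches an obstacle — is dispatched differently than the obstacle-removal or second-variation analysis you suggest. For each finite $R$, Solomon--White's strong maximum principle for $E$-stationary varifolds shows that $\Gamma^R_\pm$ cannot touch $\Gamma_1$ or $\Gamma_2$ at any interior point: contact would force an entire noncompact component of $\Gamma_i$ to lie in $\spt(\Gamma^R_\pm)$, which is compact. Hence each $\Gamma^R_\pm$ is supported in the \emph{open} region $\Omega_\pm(\Gamma_1)\cap\Omega_\pm(\Gamma_2)$, so it is an unconstrained local $E$-minimizer, and in particular two-sided stable on its regular part. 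Both the local minimizing property and the interior regularity estimates of \cite[Theorem 37.7]{Simon} pass to the limit $\Gamma_\pm$, so $\Gamma_\pm$ is a locally $E$-minimizing current with $\dim\mathrm{sing}\leq n-7$, and Lemma~\ref{RegLem} then gives $\Gamma_\pm\in\mathcal{E}_S(\cC)$. So there is no separate two-sided stability step to supply once you know each finite-$R$ minimizer avoids the obstacles.

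One smaller remark: your deduction that $\Gamma_+\subset\mathrm{cl}(W_+)$ implies $\Omega_+(\Gamma_+)\subseteq\Omega_+(\Gamma_i)$ needs the orientation of $\Omega_\pm(\Gamma_+)$ pinned down. The paper's construction forces this because the spanning curves $\gamma^R_\pm$ are chosen on the correct side of $\cC$, so the limit surface bounds an $\Omega_+(\Gamma_+)$ that is contained in $W_+$ by construction; with the Caccioppoli formulation you would need to make the analogous normalization explicit.
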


\begin{proof}
Let $\mathcal{B}(\cC)$ be the universal barrier given by Proposition \ref{UniversalBarrierProp} for the cone $\cC$. As $\Gamma_1$ and $\Gamma_2$ are asymptotic to $\cC$ one has, by Item \eqref{UnivBarrItem} of Proposition \ref{UniversalBarrierProp}, that $\Gamma_i\cap \mathcal{B}(\cC)=\emptyset$ for $i=1,2$.  Let $\Sigma_+=\partial (\Omega_+(\Gamma_1)\cap \Omega_+(\Gamma_2))$ and let $\Sigma_-=\partial (\Omega_-(\Gamma_1)\cap \Omega_-(\Gamma_2))$. Notice that $\Sigma_\pm $ are locally given as the graph of Lipschitz functions and are both hypersurfaces away from $\Gamma_1\cap \Gamma_2$.
 
Let $\Psi_{\cC}$ and $\rho_{\cC}$ be given by Proposition \ref{UniversalBarrierProp}. By Item \eqref{UnivRegNbhdItem} of Proposition \ref{UniversalBarrierProp} one has, for $R>2R_0$ large, that there is a sufficiently small $\epsilon_R>0$ so that if
$$
\gamma^R_\pm=\Psi_{\cC}(R\mathcal{L}(\cC), \pm(\rho_{\cC}(R)-\epsilon_R)),
$$
then $\gamma^R_\pm\cap\mathcal{B}(\cC)=\emptyset$ and $\gamma^R_\pm\subset\Omega_\pm(\Gamma_1)\cap\Omega_\pm(\Gamma_2)$. Moreover, each $\gamma^R_\pm$ is, by construction, homologous to $R\mathcal{L}(\cC)$ and hence is null-homologous in $\mathrm{cl}(\Omega_\pm (\Gamma_1))\cap\mathrm{cl}(\Omega_\pm (\Gamma_2))$. As such, one can minimize the expander functional $E$ in the closed set $\mathrm{cl}(\Omega_\pm (\Gamma_1))\cap\mathrm{cl}(\Omega_\pm (\Gamma_2))\cap \bar{B}_{2R}$ to obtain an integral current $\Gamma^R_\pm$ with $\partial\Gamma^R_\pm=\gamma^R_\pm$. Moreover, by \cite[Proposition 6.1 and Theorem 6.2]{WhiteObstaclePrinciple}, the singular set of $\Gamma^R_\pm$ has Hausdorff dimension less than $n-2$. It follows that the associated varifold of $\Gamma^R_\pm$ is $E$-stationary. Hence, by Solomon-White's \cite{SolomonWhite} maximum principle $\Gamma^R_\pm$ is compactly supported in $\bar{B}_{2R}\cap \Omega_\pm(\Gamma_1)\cap\Omega_\pm (\Gamma_2)$, and so Item \eqref{UnivCpctBarrItem} of Proposition \ref{UniversalBarrierProp} implies $\spt(\Gamma^R_\pm)\cap \mathcal{B}(\mathcal{C})=\emptyset$. 

By Item \eqref{UnivRegNbhdItem} of Proposition \ref{UniversalBarrierProp}, $\mathcal{C}\setminus B_{2R_0}$ is a deformation retract of $\mathbb{R}^{n+1}\setminus (\mathcal{B}(\mathcal{C})\cup B_{2R_0})$. Thus, the construction of $\gamma^R_\pm$ ensures that $[\gamma_\pm^R]=[R\mathcal{L}(\cC)]\neq 0$ in $H_{n-2}(\mathbb{R}^{n+1}\setminus (\mathcal{B}(\mathcal{C})\cup B_{2R_0}))$. Hence, as $\gamma_{\pm}^R=\partial \Gamma_{\pm}^R\subset \Real^{n+1}\backslash \mathcal{B}(\cC)$, $\spt(\Gamma^R_\pm)\cap B_{2R_0}\neq\emptyset$ must hold.

Now pick a sequence $R_i\to \infty$, up to passing to a subsequence, the $\Gamma_\pm^{R_i}$ converge, as integral currents, to a $\Gamma_\pm$ supported in $\mathrm{cl}(\Omega_\pm(\Gamma_1))\cap\mathrm{cl}(\Omega_\pm(\Gamma_2))$. As $\spt(\Gamma_{\pm}^{R_i})\cap B_{2R_0}\neq \emptyset$ and the $\Gamma_{\pm}^{R_i}$ are $E$-minimizing in $\bar{B}_{2R_i}\cap \mathrm{cl}(\Omega_\pm(\Gamma_1))\cap\mathrm{cl}(\Omega_\pm(\Gamma_2))$, it follows that $\spt(\Gamma_\pm)\cap B_{2R_0}\neq \emptyset$ and so the limit is non-trivial. Hence, arguing as in Ding \cite[Theorem 6.3]{Ding}, the tangent cone of the associated varifold, $V_{\Gamma_\pm}$, of each $\Gamma_\pm$ at infinity is $\cC$. As $\Gamma_\pm$ is locally $E$-minimizing,  standard regularity theory for area minimizing hypersurfaces \cite[Theorem 37.7]{Simon} gives the singular set of $\Gamma_\pm$ has Hausdorff dimension at most $n-7$. Hence, by our hypotheses, it follows from Lemma \ref{RegLem} that $\Gamma_\pm\in \mathcal{E}_S(\mathcal{C})$.

If $\Gamma_1\preceq \Gamma_2$, then $\Sigma_+=\Gamma_2$ and $\Sigma_-=\Gamma_1$ and the construction ensures $\Gamma_-\preceq \Gamma_1 \preceq \Gamma_2 \preceq \Gamma_+$ and Case (1) holds. Similarly, if $\Gamma_2\preceq \Gamma_1$, then the construction ensures $\Gamma_-\preceq \Gamma_2 \preceq \Gamma_1 \preceq \Gamma_+$ and Case (2) holds. If neither of these cases hold, then the construction still ensures that $\Gamma_-\preceq \Gamma_i \preceq \Gamma_+$ for $i=1,2$, but one cannot have $\Gamma_1=\Gamma_\pm$ or $\Gamma_2=\Gamma_\pm$; i.e., Case (3) holds.
\end{proof}

We also need the following compactness result.

\begin{prop}\label{CpctProp}
For $k\geq 2$ and $\alpha\in (0,1)$, let $\cC$ be a $C^{k,\alpha}$-regular cone in $\Real^{n+1}$ and assume either $2\leq n\leq 6$ or $\lambda[\cC]< \Lambda_n$.
If $\Sigma_i \in\mathcal{E}_{S}(\cC_i)$ and $\mathcal{L}(\Sigma_i)=\mathcal{L}(\cC_i)\to \mathcal{L}(\cC)$ in $C^{k,\alpha}(\mathbb{S}^n)$, then there is a $\Sigma\in  \mathcal{E}_S(\cC)$ so that, up to passing to a subsequence, $\Sigma_i\to \Sigma$ in $C^\infty_{loc}(\Real^{n+1})$.
In particular, the space $\mathcal{E}_S(\cC)$ is (sequentially) compact in $C^{\infty}_{loc}(\Real^{n+1})$.
\end{prop}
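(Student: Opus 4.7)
The plan is to extract a weak subsequential limit of the associated varifolds, identify it with a smooth stable self-expander asymptotic to $\cC$, and then upgrade the convergence to $C^\infty_{loc}$. To begin, because each $\Sigma_i$ is $C^{k,\alpha}_*$-asymptotic to $\cC_i$, the argument in \cite[Lemma 3.5]{BWProperness} yields $\lambda[\Sigma_i]=\lambda[\cC_i]$, and since $\mathcal{L}(\cC_i)\to \mathcal{L}(\cC)$ in $C^{k,\alpha}$ the entropies are uniformly bounded. The standard Gaussian-density comparison then gives uniform local area bounds, so after passing to a subsequence the associated integral $n$-varifolds $V_{\Sigma_i}$ converge weakly to some integral $n$-varifold $V$, and because $E$-stationarity is preserved under weak varifold convergence, $V$ is $E$-stationary in $\R^{n+1}$.

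Next I would identify the asymptotic behavior of $V$ using the universal barrier of Proposition \ref{UniversalBarrierProp}. Inspection of that construction shows that the parameters $r_0,N_0,R_0$ depend only on the $C^2$ geometry of $\mathcal{L}(\cC)$, so the barrier can be chosen uniformly for $i$ large and the sets $\mathcal{B}(\cC_i)$ converge (Hausdorff on compacta) to $\mathcal{B}(\cC)$. Since each $\Sigma_i\cap\mathcal{B}(\cC_i)=\emptyset$ by Item \eqref{UnivBarrItem}, the support of $V$ is disjoint from $\mathcal{B}(\cC)$, and Item \eqref{UnivTubItem} applied in the limit gives $\spt(V)\setminus \bar B_R\subset \mathcal{T}_{2N_0R^{-1}}(\cC)$ for all $R$ large. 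This forces $\cC$ to be the unique tangent cone at infinity of $V$. Non-triviality of $V$ follows from the fact that each $\Sigma_i$ meets every sufficiently large sphere inside a uniform tubular neighborhood of $\cC_i$, yielding a uniform lower bound for $V$'s mass on suitable compact sets.

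To promote $V$ to a smooth self-expander I would invoke Lemma \ref{RegLem}, which requires only that $\dim_\mathcal{H}(\mathrm{sing}(V))\leq n-7$. For $2\leq n\leq 6$, stability of the $\Sigma_i$ combined with Schoen--Simon type curvature estimates for the weighted stable minimal-surface equation (together with a standard covering argument) passes to the limit to give smoothness, so the singular set is empty. For $n\geq 7$ with $\lambda[\cC]<\Lambda_n$, the bound $\lambda[V]\leq \lambda[\cC]$ coupled with \eqref{Assump1}, Allard's regularity theorem and dimension reduction \cite[Theorem 4]{WhiteStrata} controls $\mathrm{sing}(V)$ as required. Lemma \ref{RegLem} then produces $\Sigma\in\mathcal{E}(\cC)$ with $V=V_\Sigma$.

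Finally I would upgrade to $C^\infty_{loc}$ convergence and transfer stability. The uniform curvature bounds provided by the regularity invoked above, combined with standard elliptic estimates for the self-expander equation, promote the varifold convergence to locally graphical $C^\infty$ convergence $\Sigma_i\to \Sigma$. Stability then passes to the limit: for any $u\in C^2_c(\Sigma)$, pulling back via the graphical parametrizations gives $u_i\in C^2_c(\Sigma_i)$ with $Q_{\Sigma_i}[u_i]\to Q_\Sigma[u]$, so $Q_{\Sigma_i}[u_i]\geq 0$ forces $Q_\Sigma[u]\geq 0$, whence $\Sigma\in\mathcal{E}_S(\cC)$. The sequential compactness of $\mathcal{E}_S(\cC)$ is the special case $\cC_i=\cC$. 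I expect the main obstacle to be the uniform asymptotic control: ensuring that the barriers behave continuously in $\cC$ and that the limit varifold inherits the correct cone at infinity (with non-triviality) rather than collapsing or acquiring a different link; once this uniform barrier step is secured, the remaining regularity and smoothness steps are essentially standard.
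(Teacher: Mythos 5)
Your route is genuinely different from the paper's. You pass first to a weak varifold limit $V$ of the $V_{\Sigma_i}$, locate $\spt(V)$ near $\cC$ at infinity via the universal barrier (Proposition~\ref{UniversalBarrierProp}), and then feed everything into Lemma~\ref{RegLem}. The paper never touches the barrier in this proof: for $n\geq 7$ it cites \cite[Theorem 1.1(3)]{BWProperness} as a black box, and for $2\leq n\leq 6$ it obtains multiplicity-one $C^\infty_{loc}$ convergence outside a fixed ball from \cite[Corollary 3.4]{BWProperness}, obtains locally smooth subsequential convergence inside a ball from \cite[Lemmas 3.6 and 3.8]{BWProperness} together with the Schoen--Simon--Yau compactness \cite{SSY}, and then glues the two limits across an annulus; multiplicity one on the compact piece then follows because the annular overlap is already multiplicity one and the limit has no closed components. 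Your approach is conceptually clean and buys you a proof that does not lean on \cite[Corollary 3.4]{BWProperness}, at the cost of having to rerun the asymptotic analysis yourself via the barrier.

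The one place where you gloss over a load-bearing step is multiplicity. Knowing $\spt(V)\setminus\bar{B}_R\subset\mathcal{T}_{CN_0R^{-1}}(\cC)$ only locates the support; it does not say that the tangent cone of $V$ at infinity is $\cC$ \emph{with multiplicity one}, nor that $V$ has multiplicity one in the interior. A priori the blowdown could be a sum of integer multiples of components of $\cC$. You need to close this: e.g.\ lower semicontinuity of entropy gives $\lambda[V]\leq\liminf\lambda[\Sigma_i]=\liminf\lambda[\cC_i]=\lambda[\cC]$, while a blowdown $m\cC$ has $F$-value $m\lambda[\cC]$, forcing $m=1$; then multiplicity one propagates inward because the density is locally constant on $\mathrm{reg}(V)$ and every component of $\spt(V)$ reaches the asymptotic region, there being no closed self-expanders. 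This step is not cosmetic: without it Lemma~\ref{RegLem} does not apply (its hypotheses implicitly take the tangent cone at infinity to be the multiplicity-one cone $\cC$), and even granting $V=V_\Sigma$ you would only get convergence of supports rather than the multiplicity-one $C^\infty_{loc}$ convergence of hypersurfaces required to transfer stability. The remaining pieces --- continuity of the barrier construction in $\cC$, the Schoen--Simon--Yau curvature estimates for the $E$-stable $\Sigma_i$ in low dimensions, and stability passing to the limit via pullback of compactly supported test functions --- are fine.
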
	

\begin{proof}
If $n\geq 7$, the hypothesis $\lambda[\cC]< \Lambda_n\leq 2$ and \cite[Theorem 1.1 (3)]{BWProperness} imply that, up to passing to a subsequence, the $\Sigma_i$ converge in $C^\infty_{loc}(\Real^{n+1})$ to an element $\Sigma\in \mathcal{E}(\cC)$. The nature of the convergence ensures $\Sigma\in \mathcal{E}_S(\cC)$. 
  
When $2\leq n \leq 6$, observe that, by \cite[Corollary 3.4]{BWProperness}, there is an $\mathcal{R}=\mathcal{R}(\cC)>0$ so that, up to passing to a subsequence, the $\Sigma_i\backslash \bar{B}_{\mathcal{R}}$ converge -- with multiplicity one -- in $C^\infty_{loc}(\Real^{n+1}\backslash \bar{B}_{\mathcal{R}})$ to $\Sigma^\prime$ a self-expander in $\Real^{n+1}\backslash \bar{B}_{\mathcal{R}}$ that is $C^{k,\alpha}_*$-asymptotic to $\cC$.
  
Furthermore, by \cite[Lemma 3.6]{BWProperness} and  \cite[Lemma 3.8]{BWProperness} one has, for any $R>0$ and $i$ sufficiently large,
$$
\mathcal{H}^n(\Sigma_i\cap B_R)\leq M\lambda[\cC] R^n
$$
where $M$ depends only on the dimension. In particular, each $\Sigma_i \cap B_{R}$ is stable and has uniformly bounded volume and so, by standard compactness results for stable hypersurfaces \cite{SSY}, as $2\leq n \leq 6$ one has uniform curvature estimates.  Hence, up to passing to a further subsequence, $\Sigma_i\cap B_{4\mathcal{R}}\to \Sigma^{\prime\prime}$ in $C^\infty_{loc}(B_{4\mathcal{R}})$ -- here the convergence may, in principle, be with multiplicity greater than one. However, $\Sigma^\prime=\Sigma^{\prime\prime}$ in $B_{3\mathcal{R}}\backslash \bar{B}_{2\mathcal{R}}$ and so $\Sigma=\Sigma^\prime\cup \Sigma^{\prime\prime} \in \mathcal{E}(\cC)$. By the construction and the fact that $\Sigma$ has no closed components, $\Sigma_i\to \Sigma$ in $C^\infty_{loc}(\Real^{n+1})$ with multiplicity one. The nature of the convergence ensures $\Sigma\in \mathcal{E}_S(\cC)$.
\end{proof}

\begin{proof}[Proof of Theorem \ref{GreatestThm}]
It is clear that if $\Gamma_G$ and $\Gamma_L$ exist, then they are unique. Notice that $\mathcal{E}(\cC)\neq\emptyset$. Indeed, by a minimization procedure of Ilmanen \cite{IlmanenLec} and Ding \cite[Theorem 6.3]{Ding} and standard regularity theory \cite[Theorem 37.7]{Simon}, there is a locally $E$-minimizing integral $n$-current, $\Gamma^\prime$, with singular set of Hausdorff dimension at most $n-7$ and with tangent cone of the associated varifold of $\Gamma^\prime$ at infinity equal to $\cC$. Thus, by our hypotheses, it follows from Lemma \ref{RegLem} that $\Gamma^\prime\in\mathcal{E}(\cC)$, proving the claim. 

Now let $\mathcal{B}(\cC)$ be the universal barrier associated to $\cC$ given by Proposition \ref{UniversalBarrierProp}. Pick $\Gamma\in \mathcal{E}(\cC)$, let $\Omega_+=\mathcal{B}(\cC)\cap \Omega_+(\Gamma)$ and $\Omega_-=\mathcal{B}(\cC)\cap \Omega_-(\Gamma)$. As $\Gamma\cap \mathcal{B}(\cC)=\emptyset$, by Item \eqref{UnivBarrItem} of Proposition \ref{UniversalBarrierProp}, $\mathcal{B}(\cC)=\Omega_+\cup \Omega_-$. Clearly, this decomposition is independent of the choice $\Gamma\in \mathcal{E}(\cC)$. In particular, for all $\Gamma\in \mathcal{E}(\cC)$, $\Omega_+\subset \Omega_+(\Gamma) \subset \Real^{n+1}\backslash \Omega_- $ and $\Omega_-\subset \Omega_-(\Gamma)\subset \Real^{n+1}\backslash \Omega_+$.

Let $U_+=\bigcup_{\Gamma\in \mathcal{E}(\cC)} \Omega_+(\Gamma)$ and $U_-= \bigcup_{\Gamma\in \mathcal{E}(\cC)} \Omega_-(\Gamma)$. These are both open subsets of $\Real^{n+1}$. Clearly, $\Omega_+\subset U_+\subset \Real^{n+1}\backslash \Omega_-$ and $\Omega_-\subset U_-\subset \Real^{n+1}\backslash \Omega_+$. In particular,  $\partial U_+$ and $\partial U_-$ are both nonempty. We claim that $\partial U_+=\Gamma_L$ and $\partial U_-=\Gamma_G$.
	
To that end, let $M$ denote the number of components of $\mathcal{L}(\cC)$. As $\mathcal{L}(\cC)$ is compact this is a finite positive integer. As $\Gamma$ has no compact connected components, every element of $\Gamma\in \mathcal{E}(\cC)$ has at most $M$ components. Now fix a $p_1\in \partial U_+$ and observe $p_1\not\in \Omega_+(\Sigma)$ for any $\Sigma\in \mathcal{E}(\cC)$. By definition, there are $q_i\to p_1$ and $\Gamma_i\in \mathcal{E}(\cC)$ with $q_i\in \Omega_+(\Gamma_i)$. By Proposition \ref{StableOrderProp}, there are $\Upsilon_i\in \mathcal{E}_S(\cC)$ with $\Upsilon_i\preceq \Gamma_i$ and so, $q_i \in \Omega_+(\Upsilon_i)$. By Proposition \ref{CpctProp}, up to passing to a subsequence, we have $\Upsilon_i\to \Sigma_1\in \mathcal{E}_S(\cC)$. As $q_i \in  \Omega_+(\Upsilon_i)$, while $p_1\not\in  \Omega_+(\Upsilon_i)$, one must have $p_1\in \Sigma_1$. Let  $\Sigma_1^0$ be the component of $\Sigma_1$ containing $p_1$. 

If $\partial U_+=\Sigma_1$, then $\partial U_+\in \mathcal{E}_S(\cC)$. If not, we may pick $p_2\in \partial U_+\backslash \Sigma_1$. By definition, there are $q_i^\prime\to p_2$ and $\Gamma^\prime_i\in \mathcal{E}(\cC)$ with $q_i^\prime\in \Omega_+(\Gamma^\prime_i)$.  Applying Proposition \ref{StableOrderProp} to the pairs $\Sigma_1$ and $\Gamma^\prime_i$, one produces elements $\Upsilon^\prime_i \in \mathcal{E}_S(\cC)$ with $\Upsilon^\prime_i \preceq \Gamma^\prime_i$ and $\Upsilon^\prime_i \preceq \Sigma_1$. By Proposition \ref{CpctProp}, up to passing to a subsequence, we have $\Upsilon^\prime_i\to \Sigma_2\in \mathcal{E}_S(\cC)$. Observe that, as above, one must have $p_2\in \Sigma_2$. The fact that $\Upsilon^\prime_i \preceq \Sigma_1$ implies that $\Sigma_2 \preceq \Sigma_1$ and, hence, that $p_1\in \Sigma_2$.  It follows from the strong maximum principle that $\Sigma_1^0\subset \Sigma_2$. Hence, the component of $\Sigma_2$ containing $p_1$ is equal to $\Sigma_1^0$ and so $p_1$ and $p_2$ are in different components of $\Sigma_2$.

If $\partial U_+=\Sigma_2$, then $\partial U_+\in \mathcal{E}_S(\cC)$. If not, we may pick $p_3\in \partial U_+\backslash \Sigma_2$. Arguing as above produces a $\Sigma_3\in \mathcal{E}_S(\cC)$ with $p_1, p_2$ and $p_3 $ in different components of $\Sigma_3$. As any element of $\mathcal{E}(\cC)$ has at most $M$ components this procedure must stop after $m\leq M$ steps. That is, it produces an element $\Sigma_m\in \mathcal{E}_S(\cC)$ with $\partial U_+=\Sigma_m\in \mathcal{E}_S(\cC)$. 

Hence, we have shown $\partial U_+\in \mathcal{E}_S(\cC)$. By construction, $\partial U_+\preceq \Gamma$ for any $\Gamma\in \mathcal{E}(\cC)$. That is, $\Gamma_L=\partial U_+\in \mathcal{E}_S(\cC)$ is indeed the least element. A similar argument shows that $\partial U_-\in \mathcal{E}_S(\cC)$ and satisfies $\Gamma \preceq \partial U_-$ for all $\Gamma \in \mathcal{E}(\cC)$ and so $\Gamma_G=\partial U_-$ is the greatest element. 
\end{proof}

\section{Expander mean-convex mean curvature flow of low entropy} \label{FlowSec}
Let $\set{\Sigma_t}_{t\in I}$ be a MCF. Along the flow, we define the \emph{expander mean curvature relative to the space-time point $X_0=(\mathbf{x}_0,t_0)$} to be 
$$
E^{X_0}_{\Sigma_t}(p)=2(t-t_0) H_{\Sigma_t}(p)+(\mathbf{x}(p)-\mathbf{x}_0)\cdot\mathbf{n}_{\Sigma_t}(p).
$$
We remark that $E^{X_0}_{\Sigma_t}=-S^{X_0}_{\Sigma_t}$ where $S^{X_0}_{\Sigma_t}$ was introduced in \cite[Section 3]{BWDuke} as the shrinker mean curvature relative to $X_0$. Observe that, due to the dependence on $t$, $E^{X_0}_{\Sigma_t}$ is defined for the flow. For a time $t\in\mathbb{R}$ and a hypersurface $\Sigma$, the expander mean curvature of $\Sigma$ relative to the space-time point $X_0$ and time $t$ is defined to be
$$
E^{X_0,t}_{\Sigma}(p)=2(t-t_0) H_{\Sigma}(p)+(\mathbf{x}(p)-\mathbf{x}_0)\cdot\mathbf{n}_{\Sigma}(p).
$$
Denote by $O$ the space-time origin. For $\beta>0$ we let
$$
\psi_\beta(s)=s^{-\beta} e^{-\beta s} \mbox{ for $s>0$}.
$$
The main result of this section is the following:

\begin{prop} \label{FlowProp}
For $n,k\geq 2$ and $\alpha\in (0,1)$, let $\Sigma\in\mathcal{ACH}^{k,\alpha}_n$ have no closed connected components and let $\Omega$ be an open subset of $\mathbb{R}^{n+1}$ so $\partial\Omega=\Sigma$. Suppose the following holds:
\begin{enumerate}
\item \label{AsympConeHypo} There is an $N>1$ so that $\Sigma\setminus B_{NR}\subset \mathcal{T}_{R^{-1}}(\mathcal{C}(\Sigma))$ for all $R>1$; 
\item \label{RescaleMCHypo} There are constants $c, \beta>0$ so that, by choosing the outward unit normal to $\Omega$, for $p\in\Sigma$,
$$
E^{O,1}_{\Sigma}(p) \geq c \psi_{\beta}(1+|\mathbf{x}(p)|^2)>0;
$$
\item \label{EntropyHypo} $\lambda[\Sigma]<\Lambda_n^*$.
\end{enumerate}
Then there is a unique MCF, $\set{\Sigma_t}_{t\geq 1}$ with $\Sigma_1=\Sigma$ and a family of open subsets of $\mathbb{R}^{n+1}$, $\set{\Omega_t}_{t\geq 1}$ with $\Omega_1=\Omega$ and $\partial\Omega_t=t^{-\frac{1}{2}}\Sigma_t$ so that:
\begin{enumerate}
\item \label{AsympConeItem} Each $\Sigma_t\in\mathcal{ACH}^{k,\alpha}_n$ with $\mathcal{C}(\Sigma_t)=\mathcal{C}(\Sigma)$; 
\item \label{RescaleMCItem} By choosing the outward unit normal to $\Omega_t$, for $t\geq 1$ and $p\in\Sigma_t$,
$$
E^O_{\Sigma_t}(p)\geq c\psi_\beta(1+|\mathbf{x}(p)|^2+2n(t-1))>0;
$$
\item \label{LongtimeItem} For any $1\leq t<t^\prime$, $\mathrm{cl}(\Omega_{t^\prime})\subset\Omega_t$ and so
$$
\lim_{t\to\infty} \mathrm{cl}(\Omega_t)=K \mbox{ as closed sets}
$$
where $\partial K=\Gamma$ is a stable asymptotically conical self-expander with $\mathcal{C}(\Gamma)=\mathcal{C}(\Sigma)$. Moreover,
$$
\lim_{t\to\infty} \partial\Omega_t=\Gamma \mbox{ in $C^\infty_{loc}(\mathbb{R}^{n+1})$}
$$ 
and, hence, $\Gamma$ and $\Sigma$ are $C^{k,\alpha}$ a.c.-isotopic with fixed cone.
\end{enumerate}
\end{prop}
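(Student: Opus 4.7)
The decisive observation is that the outward normal velocity of $\partial\Omega_t=t^{-1/2}\Sigma_t$ is exactly $-E^O_{\Sigma_t}/2$, since the rescaling $\tilde\Sigma_s=e^{-s/2}\Sigma_{e^s}$ converts MCF into the ``expander flow'' $\partial_s\tilde{\mathbf{x}}^\perp=\mathbf{H}_{\tilde\Sigma_s}-\tilde{\mathbf{x}}^\perp/2$, whose fixed points are exactly the self-expanders. Hypothesis (2) provides a quantitative strict inequality $E^O>0$ at $t=1$, so $\partial\Omega_t$ should move strictly into $\Omega_t$. The plan is to (a) establish smooth long-time existence inside $\mathcal{ACH}^{k,\alpha}_n$ with the cone preserved, (b) propagate the quantitative positivity of $E^O$ via a parabolic maximum principle, and (c) pass to the limit $t\to\infty$ to extract a stable self-expander $\Gamma$ and use the flow itself to construct the a.c.-isotopy with fixed cone.

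\emph{Steps (a) and (b).} Short-time MCF existence is standard; the decay in hypothesis (1), a graphical representation of $\Sigma$ over $\mathcal{C}(\Sigma)$ outside a large ball, and Ecker-Huisken interior estimates show that the flow remains in $\mathcal{ACH}^{k,\alpha}_n$ with $\mathcal{C}(\Sigma_t)=\mathcal{C}(\Sigma)$ on its maximal interval. For smoothness up to $T=\infty$, Huisken monotonicity gives $\lambda[\Sigma_t]\leq\lambda[\Sigma]<\Lambda_n^*$, which together with the stable-minimal-hypersurface regularity (for $2\leq n\leq 6$) or the hypothesis $\rstar{n,\Lambda_n^*}$ together with dimension reduction and Allard regularity (for $n\geq 7$) excludes non-flat shrinking tangent flows. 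For the pointwise bound, a direct computation shows that $E^O_{\Sigma_t}$ satisfies a linear parabolic equation of the schematic form $(\partial_t-\Delta_{\Sigma_t})E=|A_{\Sigma_t}|^2\,E$, while $Q=1+|\mathbf{x}|^2+2n(t-1)$ satisfies $(\partial_t-\Delta_{\Sigma_t})Q=0$ along MCF. A one-line calculation gives $\psi_\beta''(s)\geq 0$ for $s>0$, so $\varphi=c\,\psi_\beta(Q)$ is a sub-solution of the same linear equation; since $E^O\geq\varphi$ at $t=1$ by hypothesis (2), a weighted maximum principle (valid because of the polynomial volume growth from the entropy bound and the rapid decay of both $E^O$ and $\varphi$ at infinity) yields Item \eqref{RescaleMCItem}. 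Strict positivity then translates, via the expander-flow interpretation, into $\mathrm{cl}(\Omega_{t'})\subset\Omega_t$ for $t<t'$.

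\emph{Step (c).} The nested closed sets $\mathrm{cl}(\Omega_t)$ decrease to a closed set $K$. Uniform curvature estimates of stable-minimal-hypersurface type, obtained from the entropy bound together with the one-sided $E$-minimizing character inherited from expander mean-convexity of $\partial\Omega_t$, give $C^\infty_{\mathrm{loc}}$ sub-sequential convergence of $\partial\Omega_t$ to $\Gamma=\partial K$; the limit is stable because it is a smooth two-sided limit of expander mean-convex hypersurfaces. That $\Gamma$ is a self-expander follows from an integrated form of the monotonicity of the expander functional along the rescaled flow, forcing $E^O_{\Sigma_t}\to 0$ in $L^2_{\mathrm{loc}}$ along a subsequence. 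Asymptotic conicality with cone $\mathcal{C}(\Sigma)$ passes to the limit because the barriers of Proposition \ref{UniversalBarrierProp} confine each $\partial\Omega_t$ within $\mathcal{T}_{N_0R^{-1}}(\mathcal{C}(\Sigma))$ outside $\bar{B}_R$ uniformly in $t$, and Ecker-Huisken interior estimates at infinity upgrade this to $C^{k,\alpha}_1$ control; hence $\Gamma\in\mathcal{E}_S(\mathcal{C}(\Sigma))$ in the sharp sense. Finally, reparameterize $t\in[1,\infty]$ as $\tau\in[0,1]$ and pull $\partial\Omega_{t(\tau)}$ back to $\Sigma$ via the normal-graph diffeomorphism provided by the smoothness of the flow; the resulting map $\mathbf{F}\colon[0,1]\to\mathcal{ACH}^{k,\alpha}_n(\Sigma)$ satisfies $\mathbf{F}(0)=\mathbf{x}|_\Sigma$, has $\mathbf{F}(1)$ parameterizing $\Gamma$, and maintains $\mathrm{tr}_\infty^1[\mathbf{F}(\tau)]=\mathbf{x}|_{\mathcal{L}(\Sigma)}$ for every $\tau$. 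The principal difficulty is controlling the long-time smoothness and the uniform $C^{k,\alpha}_1$ decay to the cone \emph{simultaneously}, which is needed both to place $\Gamma$ in the sharp space $\mathcal{ACH}^{k,\alpha}_n$ and to keep the isotopy in the a.c.\ category with fixed cone; this requires a delicate interplay between the entropy hypothesis, the universal barriers of Section \ref{UniversalBarrierSec}, and Ecker-Huisken-type interior estimates.
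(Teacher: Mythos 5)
The proposal reproduces the paper's high-level strategy: flow $\Sigma$ by MCF, propagate expander mean-convexity via a parabolic comparison with $c\psi_\beta(1+|\mathbf{x}|^2+2n(t-1))$, and pass to the $t\to\infty$ limit of the nested $\Omega_t$ to extract a stable self-expander while using the flow itself as the isotopy. However, there is a genuine gap at the crucial long-time existence step. You write that the entropy bound $\lambda[\Sigma_t]<\Lambda_n^*$, together with dimension reduction/Allard or Schoen-Simon, ``excludes non-flat shrinking tangent flows.'' That is not true: the round sphere $\mathbb{S}^n$ is a non-flat self-shrinker with $\lambda[\mathbb{S}^n]<\lambda[\mathbb{S}^{n-1}\times\mathbb{R}]\leq\Lambda_n^*$, so the entropy hypothesis by itself does not rule out finite-time spherical singularities and hence does not yield $T=\infty$. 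The paper's argument (Proposition \ref{LongtimeProp}) requires a further input that you omit: the expander mean-convexity estimate of Lemma \ref{RescaleMCLem}, sharpened into the pointwise bound $|A_{\Sigma_t}|\leq C\cdot E^O_{\Sigma_t}$ of Lemma \ref{BoundCurvLem}, forces any rescaled limit $\Gamma$ at a hypothetical singularity to satisfy $|A_\Gamma|\leq 2CT\,H_\Gamma$; this makes $\Gamma$ a mean-convex self-shrinker, which under the entropy bound and Colding--Minicozzi's classification \cite{CMGen} must be the round sphere; the hypothesis that $\Sigma$ (hence each $\Sigma_t$) has no closed components then gives the contradiction. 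Without the mean-convexity-to-sphere chain, long-time existence does not follow from the ingredients you list.

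A second, smaller issue: you construct the a.c.-isotopy by reparameterizing $t\in[1,\infty]$ as $\tau\in[0,1]$ and pulling $\partial\Omega_{t(\tau)}$ back to $\Sigma$ ``via the normal-graph diffeomorphism.'' It is not clear that $\Sigma_t$ remains a normal graph over $\Sigma$, or that the flow map gives a single continuous path into $\mathcal{ACH}^{k,\alpha}_n(\Sigma)$ with uniform $C^{k,\alpha}_1$ control on the infinite time interval and with $\mathrm{tr}^1_\infty[\mathbf{F}(\tau)]=\mathbf{x}|_{\mathcal{L}(\Sigma)}$ for all $\tau$. The paper handles this by a compactness argument: once the rescaled flow $\Gamma_s=t^{-1/2}\Sigma_t$ is within a small $C^{k,\alpha}_1$ neighborhood of $\Gamma$ (using Lemma \ref{AsympFlowLem}), Lemma \ref{IsotopyLem} gives an a.c.-isotopy with fixed cone from $\Gamma_{s_1}$ to $\Gamma$; for $s\in[1,s_1]$ the continuity of the flow plus Lemma \ref{AsympFlowLem} give, around each $s$, a short a.c.-isotopy, and compactness of $[1,s_1]$ yields a finite composition. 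You should either adopt this finite-composition argument or justify uniform control of the flow-pull-back over $[1,\infty]$, which is not automatic.

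The rest of the outline (Huisken monotonicity to propagate the entropy bound, the computation that $\psi_\beta(Q)$ is a subsolution with $Q=1+|\mathbf{x}|^2+2n(t-1)$, Brakke-type compactness and one-sided $E$-minimality via Solomon--White to identify the static limit as a stable self-expander, and the barriers of Proposition \ref{UniversalBarrierProp} to control asymptotics) is consistent with the paper's proof.
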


Before the proof of Proposition \ref{FlowProp} we will need several auxiliary lemmas. Fix a unit vector $\mathbf{e}$, a point $\mathbf{x}_0\in\mathbb{R}^{n+1}$ and $r,h>0$. Let 
$$
C_{\mathbf{e}}(\mathbf{x}_0,r,h)=\set{\mathbf{x}\in\mathbb{R}^{n+1}\colon |(\mathbf{x}-\mathbf{x}_0)\cdot\mathbf{e}|<h \mbox{ and } |\mathbf{x}-\mathbf{x}_0|^2<r^2+|(\mathbf{x}-\mathbf{x}_0)\cdot\mathbf{e}|^2}
$$
be the solid open cylinder with axis $\mathbf{e}$ centered at $\mathbf{x}_0$ and of radius $r$ and height $2h$. Recall the following definition from \cite[Section 3]{BWProperness}.

\begin{defn}
Let $k\geq 2$ be an integer and $\alpha\in (0,1)$. A hypersurface $\Sigma\subset\mathbb{R}^{n+1}$ is a $C^{k,\alpha}$ $\mathbf{e}$-graph of size $\delta$ on scale $r$ at $\mathbf{x}_0$ if there is a function $f\colon B^n_r\subset P_{\mathbf{e}}\to\mathbb{R}$ with
$$
\sum_{j=0}^k r^{-1+j} \Vert\nabla^j f\Vert_0+r^{-1+k+\alpha} [\nabla^k f]_\alpha < \delta,
$$
where $P_\mathbf{e}$ is the $n$-dimensional subspace of $\mathbb{R}^{n+1}$ normal to $\mathbf{e}$, so that 
$$
\Sigma\cap C_{\mathbf{e}}(\mathbf{x}_0, r, \delta r)=\set{\mathbf{x}_0+\mathbf{x}(x)+f(x)\mathbf{e}\colon x\in B_r^n}.
$$
\end{defn}

\begin{lem} \label{AsympFlowLem}
For $n,k\geq 2$ and $\alpha\in (0,1)$, let $\set{\Sigma_t}_{t\in [1,T)}$ be a MCF in $\mathbb{R}^{n+1}$. Suppose $\Sigma_1\in\mathcal{ACH}^{k,\alpha}_n$ with the asymptotic cone $\mathcal{C}=\mathcal{C}(\Sigma_1)$ and that there is an $N>0$ so that $\Sigma_1\setminus B_{NR}\subset\mathcal{T}_{R^{-1}}(\mathcal{C})$ for all $R\geq 1$. Given $\gamma\in (0,1)$ there are constants $\tilde{N}=\tilde{N}(\Sigma_1,\gamma)>1$ and $\eta=\eta(\Sigma_1,\gamma)>0$ so that 
\begin{enumerate}
\item \label{AsympFlowConeItem} For all $R\geq 1$ and $t\in [1,T)$, $\Sigma_t\setminus B_{\tilde{N} R \sqrt{t}}\subset \mathcal{T}_{R^{-1}\sqrt{t}}(\mathcal{C})$;
\item \label{AsympFlowGraphItem} For $t\in [1,T)$, $\Sigma_t$ is a $C^{k,\alpha}$ $\mathbf{n}_{\mathcal{C}}(p)$-graph of size $\gamma$ on scale $\eta |\mathbf{x}(p)|$ at every $p\in\mathcal{C}\setminus B_{\tilde{N}\sqrt{t}}$. In particular,
$$
\sup_{t\in [1,T)}\sup_{\Sigma_t\setminus B_{\tilde{N}\sqrt{t}}} |A_{\Sigma_t}|<\infty.
$$
\end{enumerate}
\end{lem}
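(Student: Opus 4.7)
My plan is to prove the lemma by a pseudolocality argument applied at each point of $\cC$ on the parabolic scale. For $p\in\cC$ with $|\mathbf{x}(p)|=r$ large, the cone $\cC$ is essentially a hyperplane on scale $\delta r$ (with $\delta=\delta(\cC)$ fixed small), since $|A_\cC|\leq Cr^{-1}$ by the $C^2$-regularity of the link $\mathcal{L}(\cC)$. Meanwhile the hypothesis $\Sigma_1\setminus B_{NR}\subset\mathcal{T}_{R^{-1}}(\cC)$, combined with $\Sigma_1\in\mathcal{ACH}_n^{k,\alpha}$ and standard interior elliptic estimates, forces $\Sigma_1\cap C_{\mathbf{n}_\cC(p)}(p,\delta r,\delta r)$ to be a $C^{k,\alpha}$ $\mathbf{n}_\cC(p)$-graph over the tangent plane $T_p\cC$ with $C^{k,\alpha}$-size tending to zero as $r\to\infty$, uniformly in the direction of $p$.

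A pseudolocality theorem for MCF with bounded local area ratios (such as that of Chen--Yin, or of Ilmanen--Neves--Schulze; the area-ratio hypothesis holds because $\lambda[\Sigma_1]$ is finite) then yields that $\Sigma_t$ remains a small Lipschitz $\mathbf{n}_\cC(p)$-graph over $T_p\cC$ on the half-radius cylinder for the entire parabolic window $t-1\leq c(\delta r)^2$. Ecker--Huisken interior estimates upgrade this to uniform $C^{k,\alpha}$-bounds, and the $C^{k,\alpha}$-closeness of $\cC$ to $T_p\cC$ on that cylinder lets one transfer to a $C^{k,\alpha}$ $\mathbf{n}_\cC(p)$-graph of size $\gamma$ over $\cC$ on scale $\delta r/2$ at $p$. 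Choosing $\tilde{N}$ so large that $c\delta^2\tilde{N}^2\geq 1$, the bound $|\mathbf{x}(p)|\geq\tilde{N}\sqrt{t}$ ensures that the pseudolocality window contains $[1,t]$, which gives Item~\eqref{AsympFlowGraphItem} with $\eta=\delta/2$. The uniform curvature bound follows at once, since a $C^2$-graph of size $\gamma$ on scale $\eta|\mathbf{x}(p)|\geq\eta\tilde{N}$ has $|A_{\Sigma_t}|\leq C\gamma(\eta\tilde{N})^{-1}$.

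For Item~\eqref{AsympFlowConeItem}, I would refine the $C^0$-estimate by writing $\Sigma_t$ locally as a normal graph over $\cC$ itself with graph function $u$; this is legitimate once Item~\eqref{AsympFlowGraphItem} is in hand. The function $u$ satisfies a quasilinear parabolic equation whose inhomogeneous term is, to leading order, the scalar mean curvature $H_\cC$, and $|H_\cC|\leq Cr^{-1}$. A pointwise maximum-principle estimate (together with the $C^{k,\alpha}$-bounds on $u$) yields
$$\|u(\cdot,t)\|_{C^0}\leq \|u(\cdot,1)\|_{C^0}+C(t-1)r^{-1}\leq (N+C(t-1))r^{-1}.$$
Specializing to $r=\tilde{N}R\sqrt{t}$ and enlarging $\tilde{N}$ so that $N+C(t-1)\leq\tilde{N}t$ for all $t\geq 1$, one arrives at $\|u(\cdot,t)\|_{C^0}\leq\tilde{N}tr^{-1}=R^{-1}\sqrt{t}$, which is Item~\eqref{AsympFlowConeItem}.

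The main obstacle is making pseudolocality uniform as $r\to\infty$ so that all constants depend only on $\Sigma_1$ and $\gamma$, and then changing the reference hypersurface from the tangent plane $T_p\cC$ (where pseudolocality naturally lives) to the curved cone $\cC$ while tracking the drift coming from the non-vanishing mean curvature of $\cC$. The scaling relations $\tilde{N}^2\gtrsim (c\delta^2)^{-1}$ and $N+C(t-1)\leq\tilde{N}t$ encode how these two ingredients fit together, and choosing $\tilde{N}$ to satisfy both simultaneously is what makes both items come out with the same constant.
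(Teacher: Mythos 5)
The main gap is in Item~\eqref{AsympFlowGraphItem}. Your strategy (pseudolocality over $T_p\cC$ on scale $\sim \delta r$, then regularity estimates, then transfer to a graph over $\cC$) is the same in spirit as the paper's, but the step from ``Ecker--Huisken interior estimates upgrade this to uniform $C^{k,\alpha}$-bounds'' to ``graph of size $\gamma$ on scale $\delta r/2$'' does not actually produce smallness. Parabolic interior/Schauder estimates give scale-invariant bounds that are $O(1)$, not $O(\gamma)$: at scale $R=\delta r$ one gets something like $R\,|A_{\Sigma_t}| \le C$ where $C$ does not vanish with the initial Lipschitz constant, because of the $R^{-2}$ and $(t-1)^{-1}$ factors in those estimates. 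Reducing to the fixed fraction $\delta r/2$ does not help, and since you take $\delta = \delta(\cC)$ independent of $\gamma$, your $\eta=\delta/2$ cannot depend on $\gamma$, yet the lemma's $\eta=\eta(\Sigma_1,\gamma)$ must. The paper bridges this with a second, $\gamma$-dependent scale reduction: it records small $C^0$ and $C^1$ bounds for the graph function $f_p$ (the $C^0$ one coming from Item~\eqref{AsympFlowConeItem} applied at the initial time, together with the time modulus-of-continuity from the Schauder estimate), and an $O(1)$ Schauder bound on the scale-$r$-invariant $C^{k,\alpha}$ norm; restricting then to the strictly smaller scale $\rho r$ picks up a factor $\rho^{j-1}\le\rho$ on the $j\ge 2$ terms, so the full scale-$\rho r$-invariant norm is bounded by $C''(\delta+\rho+N'\epsilon^{-2}\rho^{-1}\tilde{N}^{-1})$, which is made $<\gamma$ by choosing $\delta,\rho$ small and $\tilde{N}$ large. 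The resulting $\eta=\epsilon\rho$ depends on $\gamma$, as allowed. Your write-up omits this $\rho$-scale step, so as written the argument cannot reach an arbitrary $\gamma$.

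For Item~\eqref{AsympFlowConeItem} you take a genuinely different route: the paper simply cites \cite[Lemma 4.3]{BWDuke}, whereas you propose a direct maximum-principle estimate for the normal graph function $u$ over $\cC$. That is a reasonable alternative, but the pointwise inequality $|u(p,t)|\le(N+C(t-1))|\mathbf{x}(p)|^{-1}$ needs supporting machinery you do not supply: a supersolution of the form $v(p,t)=(N+C(t-1))|\mathbf{x}(p)|^{-1}$, control of the zeroth-order term $|A_\cC|^2 u$ (which, though it only scales like $|\mathbf{x}|^{-2}$, has the wrong sign and could in principle create exponential growth), and boundary data on $\partial B_{\tilde{N}\sqrt{t}}\cap\cC$ where $u$ is not a priori small. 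Also note your logical order is the reverse of the paper's: the paper proves Item~\eqref{AsympFlowConeItem} first and uses it to feed the small $C^0$ bound into Item~\eqref{AsympFlowGraphItem}, whereas you prove Item~\eqref{AsympFlowGraphItem} first to justify writing $\Sigma_t$ as a graph over $\cC$. Your order is not circular, but the gap in Item~\eqref{AsympFlowGraphItem} leaves the whole argument incomplete.
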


\begin{proof}
Fix any $t\in [1,T)$ and define $\Gamma_s=t^{-\frac{1}{2}} \Sigma_{1+t(s+1)}$ for $-1\leq s< t^{-1}(T-1)-1$. Thus, $\set{\Gamma_s}$ is a MCF and the hypothesis on $\Sigma_1$ implies $\Gamma_{-1}\in\mathcal{ACH}^{k,\alpha}_n$ and $\Gamma_{-1}\setminus B_{NR}\subset\mathcal{T}_{R^{-1}}(\mathcal{C})$ for all $R\geq 1$. Thus, by \cite[Lemma 4.3]{BWDuke}, there is an $N^\prime=N^\prime(\mathcal{C},N,n)>1$ so that, for all $R\geq 1$ and $s\in [-1,-t^{-1}]$, $\Gamma_s\setminus B_{N^\prime R}\subset\mathcal{T}_{R^{-1}}(\mathcal{C})$. For $s=-t^{-1}$, this gives $\Sigma_t\setminus B_{N^\prime R\sqrt{t}}\subset\mathcal{T}_{R^{-1}\sqrt{t}}(\mathcal{C})$ for all $R\geq 1$, proving Item \eqref{AsympFlowConeItem}.

Let $\delta\in (0,1)$ be a number to be chosen. As $\Sigma_1$ is $C^{k,\alpha}_*$-asymptotically conical, there is an $\epsilon=\epsilon(\Sigma_1,\delta)\in (0,1)$ and $\tilde{N}=\tilde{N}(\Sigma_1,\delta)>4N^\prime$ so that $\Sigma_1$ is a $C^{k,\alpha}$ $\mathbf{n}_{\mathcal{C}}(p)$-graph of size $\delta$ on scale $8r$, where $r=r(p)=\epsilon |\mathbf{x}(p)|$, at every $p\in\mathcal{C}\setminus B_{\tilde{N}}$ and, by scaling, so is $\Gamma_{-1}$. Thus, by the pseudo-locality property for MCF, \cite[Theorem 1.5]{IlmanenNevesSchulze}, one may choose $\delta$ sufficiently small so that, for every $p\in\mathcal{C}(\Sigma_1)\setminus B_{\tilde{N}}$ and $s\in [-1,-t^{-1}]$, $\Gamma_s\cap C_{\mathbf{n}_{\mathcal{C}}(p)}(p,4r,4r)$ is given by the graph of a function $f_p(s,x)$ over (some subset of) $T_{p}\mathcal{C}$ which satisfies
$$
(4r)^{-1}\Vert f_p(s,\cdot)\Vert_{0; B^n_{4r}}+\Vert \nabla f_p(s,\cdot)\Vert_{0; B^n_{4r}} \leq 1
$$
where $\nabla$ is the gradient in spatial variable $x$. As $\set{\Gamma_s}$ is a MCF, $f_p(s,x)$ satisfies
$$
\frac{\partial f_p}{\partial s}=\sqrt{1+|\nabla f_p|^2}\, \mathrm{div}\left(\frac{\nabla f_p}{\sqrt{1+|\nabla f_p|^2}}\right).
$$
It follows from the H\"{o}lder estimates for quasi-linear parabolic equations \cite[Theorem 1.1 of Chapter 6]{LSU} that given $\alpha^\prime\in (0,1)$ there is a $C=C(n,\alpha^\prime)$ so that
$$
\sup_{s\in [-1,-t^{-1}]}[\nabla f_p(s,\cdot)]_{\alpha^\prime; B^n_{2r}} +\sup_{x\in B_{2r}^n} [\nabla f_{p}(\cdot,x)]_{\frac{\alpha^\prime}{2}; [-1,-t^{-1}]} \leq Cr^{-\alpha^\prime}.
$$
Hence, by the Schauder estimates (see, e.g., \cite[Theorem 5.1 of Chapter 4]{LSU}), one has that, for every $s\in [-1,-t^{-1}]$,
\begin{equation}\label{SchauderEqn}
\sum_{j=0}^k r^{j-1} \Vert \nabla^j f_p(s,\cdot)\Vert_{0; B^n_{r}}+r^{k+\alpha-1} [\nabla^k f_p(s,\cdot)]_{\alpha; B^n_{r}} \leq C^\prime
\end{equation}
and that 
$$
\sup_{x\in B^n_{r}} [\nabla f_p(\cdot,x)]_{\frac{1}{2};[-1,-t^{-1}]} \leq C^\prime r^{-1}
$$
where $C^\prime=C^\prime(n,k,\alpha)>1$. 

These estimates together with the equation of $f_p$ implies, for every $s\in [-1,-t^{-1}]$, that
\begin{align*}
|f_p(s,x) & -f_p(-1,0)| \leq |f_p(s,x)-f_p(-1,x)|+|f_p(-1,x)-f_p(-1,0)| \\
& \leq (s+1)\Vert\partial_s f_p(\cdot,x)\Vert_{0; [-1,-t^{-1}]}+|\nabla f_p(-1,0)||x|+|x|^2\Vert \nabla^2 f_p(-1,\cdot)\Vert_{0;B^n_r} \\
& \leq |\nabla f_p(-1,0)| |x|+C^{\prime\prime} r^{-1} \left(|x|^2+(s+1)\right)
\end{align*}
and
\begin{align*}
|\nabla f_p(s,x)-\nabla f_p(-1,0)| & \leq |\nabla f_p(s,x)-\nabla f_p(s,0)|+|\nabla f_p(s,0)-\nabla f_p(-1,0)| \\
& \leq |x| \Vert\nabla^2 f_p(s,\cdot)\Vert_{0;B_r^n}+\sqrt{s+1} \, [\nabla f_p(\cdot,0)]_{\frac{1}{2}; [-1,-t^{-1}]} \\
& \leq C^{\prime\prime}r^{-1}\left(|x|+\sqrt{s+1}\right)
\end{align*}
where $C^{\prime\prime}=C^{\prime\prime}(n,C^\prime)>C^\prime$. Observe that, by Item \eqref{AsympFlowConeItem}, $|f_p(-1,0)|<N^\prime |\mathbf{x}(p)|^{-1}$ and $|\nabla f_p(-1,0)|\leq\delta$. Thus, for any $\rho\in (0,1)$,
\begin{equation} \label{C1Eqn}
(\rho r)^{-1} \Vert f_p(s,\cdot)\Vert_{0; B^n_{\rho r}}+\Vert \nabla f_p(s,\cdot)\Vert_{0; B^n_{\rho r}} \leq 2C^{\prime\prime}(\delta+\rho+N^\prime\epsilon^{-2}\rho^{-1}\tilde{N}^{-1}).
\end{equation}

Hence, combining \eqref{SchauderEqn} and \eqref{C1Eqn} gives, for all $s\in [-1,-t^{-1}]$, 
$$
\sum_{j=0}^k (\rho r)^{j-1} \Vert \nabla^j f_p(s,\cdot)\Vert_{0; B^n_{\rho r}} +(\rho r)^{k+\alpha-1} [\nabla^k f_p(s,\cdot)]_{\alpha; B^n_{\rho r}} \leq  4C^{\prime\prime}(\delta+\rho+N^\prime\epsilon^{-2}\rho^{-1}\tilde{N}^{-1}).
$$
Now choose $\delta=\rho=\frac{\gamma}{16C^{\prime\prime}}$ and enlarge $\tilde{N}$ to ensure that the right side of the above estimate is less than $\gamma$. As $\Gamma_s=t^{-\frac{1}{2}}\Sigma_t$ when $s=-t^{-1}$, Item \eqref{AsympFlowGraphItem} follows immediately from this by setting $\eta=\epsilon \rho$.  
\end{proof}

\begin{lem} \label{RescaleMCLem}
Let $\set{\Sigma_t}_{t\in [1,T)}$ be a MCF in $\mathbb{R}^{n+1}$ and assume $\Sigma_1$ is a $C^2$-hypersurface of finite entropy. If the following holds:
\begin{enumerate}
\item For some $c, \beta>0$, by a suitable choice of the unit normal on $\Sigma_1$, for $p\in\Sigma_1$,
$$
E^{O}_{\Sigma_1}(p) \geq c \psi_\beta(1+|\mathbf{x}(p)|^2);
$$
\item For some $\tilde{N}>0$,
$$
\sup_{\Sigma_1}|A_{\Sigma_1}|+\sup_{t\in [1,T)}\sup_{\Sigma_t\setminus B_{\tilde{N} \sqrt{t}}}  |A_{\Sigma_t}|<\infty,
$$
\end{enumerate}
then, for $t\in [1,T)$ and $p\in\Sigma_t$,
$$
E^{O}_{\Sigma_t}(p)\geq c \psi_\beta(1+|\mathbf{x}(p)|^2+2n(t-1))
$$
where the unit normal on $\Sigma_t$ is chosen to be compatible with the one on $\Sigma_1$.
\end{lem}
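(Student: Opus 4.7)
The plan is a parabolic maximum principle argument for $G = E^O_{\Sigma_t} - c\psi_\beta(s)$, with $s = 1 + |\mathbf{x}|^2 + 2n(t-1)$, starting from the non-negativity at $t=1$ provided by Hypothesis~(1).

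First I would establish the striking identity $(\partial_t - \Delta_{\Sigma_t})E^O_{\Sigma_t} = |A|^2 E^O_{\Sigma_t}$ along MCF. This follows from Huisken's $(\partial_t - \Delta)H = |A|^2 H$ combined with the computation $(\partial_t - \Delta)(\mathbf{x}\cdot\mathbf{n}) = |A|^2(\mathbf{x}\cdot\mathbf{n}) - 2H$, derived via $\partial_t F = \Delta F = -H\mathbf{n}$, $\partial_t\mathbf{n} = \nabla H$, and $\Delta\mathbf{n} = \nabla H - |A|^2\mathbf{n}$; the $+2H$ from $\partial_t(2tH)$ precisely cancels the $-2H$ here, giving the clean identity. (One can sanity-check this on a shrinking sphere $R(t)^2 = R_0^2 - 2nt$, where $E^O \equiv R_0^2/R$ and both sides equal $nR_0^2/R^3$.) Next, $s$ is tailored so that $\partial_t s = 2n$ exactly cancels the $2n$ in $\Delta_{\Sigma_t}|\mathbf{x}|^2 = 2n - 2H(\mathbf{x}\cdot\mathbf{n})$, giving $(\partial_t - \Delta_{\Sigma_t})s = 2H(\mathbf{x}\cdot\mathbf{n})$; chain rule then yields
\[
(\partial_t - \Delta_{\Sigma_t})\phi = 2\psi_\beta'(s) H(\mathbf{x}\cdot\mathbf{n}) - 4\psi_\beta''(s)|\mathbf{x}^T|^2
\]
for $\phi := \psi_\beta(s)$, and combining gives
\[
(\partial_t - \Delta_{\Sigma_t})G = |A|^2 G + c\bigl[|A|^2\phi - 2\psi_\beta'(s) H(\mathbf{x}\cdot\mathbf{n}) + 4\psi_\beta''(s)|\mathbf{x}^T|^2\bigr].
\]

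Second, I would argue by contradiction. Suppose $G < 0$ somewhere, and take the earliest space-time contact $(p^*, t^*)$ with $G(p^*, t^*) = 0$. Because $E^O_{\Sigma_t}$ decays only polynomially at spatial infinity (from the asymptotic conical structure, with quantitative rate supplied by Hypothesis~(2)) while $\phi$ decays exponentially in $|\mathbf{x}|^2$, this contact must occur in a bounded spatial ball. Hypothesis~(2) together with standard MCF smoothing propagating $\sup_{\Sigma_1}|A| < \infty$ into the interior region $B_{\tilde N\sqrt{t}}$ for moderate $t$ then gives a uniform curvature bound on this ball. At the contact $\partial_t G \le 0$, $\nabla G = 0$, and $\Delta G \ge 0$, and using $G(p^*, t^*) = 0$ in the identity above reduces $(\partial_t - \Delta_{\Sigma_t})G \le 0$ to the algebraic inequality
\[
|A|^2\phi - 2\psi_\beta'(s) H(\mathbf{x}\cdot\mathbf{n}) + 4\psi_\beta''(s)|\mathbf{x}^T|^2 \le 0.
\]
Using $\psi_\beta'/\psi_\beta = -\beta(s+1)/s$ and $\psi_\beta''/\psi_\beta = [\beta + \beta^2(s+1)^2]/s^2$, together with $|H|^2 \le n|A|^2$ and a Young inequality to absorb the indefinite-sign middle term, this inequality can be shown to fail, yielding the desired contradiction and hence $G \ge 0$ throughout.

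The main obstacle is the algebraic verification of the last inequality at the contact point. The term $-2\psi_\beta'(s) H(\mathbf{x}\cdot\mathbf{n})$ has indefinite sign, and Young's inequality only controls it at the cost of spending a fraction of $|A|^2\phi$ against its $H^2$ part and a fraction of $4\psi_\beta''(s)|\mathbf{x}^T|^2 \sim \beta^2 s\phi$ against its $(\mathbf{x}\cdot\mathbf{n})^2$ part. The latter absorption is effective only when $|\mathbf{x}^T|$ is not much smaller than $|\mathbf{x}\cdot\mathbf{n}|$, which at large $|\mathbf{x}|$ follows from the asymptotic flatness of $\Sigma_t$: the surface is nearly tangent to the cone, so $|\mathbf{x}\cdot\mathbf{n}|$ is of smaller order than $|\mathbf{x}|$ while $|\mathbf{x}^T| = O(|\mathbf{x}|)$. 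Combined with the spatial boundedness of the contact point, this produces strict positivity of the full bracket and closes the argument.
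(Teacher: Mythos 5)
Your plan starts the same way as the paper's proof: compute $(\partial_t-\Delta_{\Sigma_t})E^O_{\Sigma_t}=|A|^2E^O_{\Sigma_t}$, compute the evolution of $\varrho=1+|\mathbf{x}|^2+2n(t-1)$, apply the chain rule to $\psi_\beta(\varrho)$, and run a maximum principle. But your computation of the evolution of $\varrho$ is wrong, and this error cascades into a much more complicated (and ultimately incomplete) argument.

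You wrote that $\partial_t s = 2n$, but you forgot that the material time derivative of $|\mathbf{x}|^2$ under MCF is itself $2\mathbf{x}\cdot\mathbf{H}=-2H(\mathbf{x}\cdot\mathbf{n})$. The correct computation is
\[
\left(\partial_t-\Delta_{\Sigma_t}\right)|\mathbf{x}|^2 = -2H(\mathbf{x}\cdot\mathbf{n}) - \left(2n-2H(\mathbf{x}\cdot\mathbf{n})\right) = -2n,
\]
so $(\partial_t-\Delta_{\Sigma_t})\varrho = 0$ (this is \cite[Lemma 1.1]{EHAnn}, as the paper cites), not $2H(\mathbf{x}\cdot\mathbf{n})$. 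Consequently the chain rule gives the clean inequality
\[
\left(\partial_t-\Delta_{\Sigma_t}\right)\psi_\beta(\varrho) = -\psi_\beta''(\varrho)\,|\nabla_{\Sigma_t}\varrho|^2 \leq 0
\]
since $\psi_\beta''>0$, and therefore
\[
\left(\partial_t-\Delta_{\Sigma_t}\right)\left(c\psi_\beta(\varrho)-E^O_{\Sigma_t}\right) \leq -|A|^2E^O_{\Sigma_t}\leq |A|^2\left(c\psi_\beta(\varrho)-E^O_{\Sigma_t}\right).
\]
The entire second half of your argument---absorbing $-2\psi_\beta'(s)H(\mathbf{x}\cdot\mathbf{n})$ via Young's inequality, appealing to $|\mathbf{x}^T|\gtrsim|\mathbf{x}\cdot\mathbf{n}|$ from asymptotic flatness, and so on---is fighting a term that simply isn't there.

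The second issue is your maximum principle set-up. You propose a first-contact argument and claim the contact point lies in a compact region because ``$E^O_{\Sigma_t}$ decays only polynomially while $\phi$ decays exponentially.'' But the hypotheses give only a one-sided \emph{lower} bound $E^O_{\Sigma_1}\geq c\psi_\beta(\ldots)$, which allows $E^O_{\Sigma_1}$ to decay just as fast as $\psi_\beta$; Hypothesis (2) is a curvature bound and supplies no polynomial lower bound on $E^O$. So the argument that the comparison happens in a bounded ball does not go through. What the paper actually does is observe that $c\psi_\beta(\varrho)-E^O_{\Sigma_t}$ has at most \emph{linear growth} (since $|E^O_{\Sigma_t}|\leq C(1+|\mathbf{x}|)$ once $|A_{\Sigma_t}|$ is uniformly bounded on $[1,T_0]$), and then applies a non-compact maximum principle in the style of \cite[Corollary 1.1]{EHAnn}, which needs precisely a coefficient bound and a growth bound, not a compact contact point. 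You should replace the contact-point argument with that growth-controlled parabolic maximum principle, which closes the proof once the evolution inequality is corrected.
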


\begin{proof}
First of all, by \cite[Proposition 4]{Smoczyk},
$$
\left(\frac{d}{dt}-\Delta_{\Sigma_t}\right) E^O_{\Sigma_t}=|A_{\Sigma_t}|^2 E^{O}_{\Sigma_t}.
$$
Let 
$$
\varrho(p,t)=1+|\mathbf{x}(p)|^2+2n(t-1)
$$
and observe that, by \cite[Lemma 1.1]{EHAnn},
$$
\left(\frac{d}{dt}-\Delta_{\Sigma_t}\right)\varrho=0.
$$
Thus, the chain rule gives
$$
\left(\frac{d}{dt}-\Delta_{\Sigma_t}\right) \psi_{\beta}(\varrho)=-\psi_\beta^{\prime\prime}(\varrho) |\nabla_{\Sigma_t} \varrho|^2 \leq 0
$$
where 
$$
\psi_\beta^{\prime\prime}(s)=\left(\beta(\beta+1)+2\beta^2 s +\beta^2s^2\right) s^{-\beta-2} e^{-\beta s} >0.
$$
Hence, combining the equations for $E^O_{\Sigma_t}$ and $\psi_\beta(\varrho)$ gives
$$
\left(\frac{d}{dt}-\Delta_{\Sigma_t}\right)(c\psi_\beta(\varrho)-E^O_{\Sigma_t}) \leq -|A_{\Sigma_t}|^2 E^O_{\Sigma_t}\leq |A_{\Sigma_t}|^2 (c\psi_\beta(\varrho)-E^O_{\Sigma_t}).
$$

As the flow is regular\footnote{That is, the flow is smooth away from initial time and attains its initial data in the $C^2_{loc}(\mathbb{R}^{n+1})$ topology.} on $[1,T)$, Hypothesis (2) implies that, for all $T_0\in (1,T)$,  there is a constant $C=C(T_0)$ so that for all $t\in [0,T_0]$
$$
\sup_{\Sigma_t} \left(|A_{\Sigma_t}|^2(p)+|\psi_\beta(\varrho(p,t))|+ (1+|\mathbf{x}(p)|)^{-1} |E^O_{\Sigma_t}(p)|\right)\leq C.
$$
That is, $c\psi_\beta(\varrho)-E^O_{\Sigma_t}$ has at most linear growth on $\Sigma_t$ for each $t\in [1,T_0]$ and the second fundamental form is uniformly bounded by $C$. It follows from a non-compact maximum principle (e.g., a simple modification of the proof of \cite[Corollary 1.1]{EHAnn}) and the fact that on $\Sigma_1$,  $c\psi_\beta(\varrho)-E^O_{\Sigma_1}\leq 0$, that $c\psi_\beta(\varrho)-E^O_{\Sigma_t}\leq 0 $ for all $t\in [1,T_0]$.  As $T_0$ was arbitrary in $(1,T)$, the claim follows.
\end{proof}

\begin{lem} \label{BoundCurvLem}
Let $\set{\Sigma_t}_{t\in [1,T)}$ be a MCF in $\mathbb{R}^{n+1}$ and assume $\Sigma_1$ is a $C^2$-hypersurface of finite entropy. If the following holds:
\begin{enumerate}
\item For some $c, \beta>0$, by a suitable choice of the unit normal on $\Sigma_t$, for $t\in [1,T)$ and $p\in\Sigma_t$,
$$
E^{O}_{\Sigma_t}(p) \geq c \psi_\beta(1+|\mathbf{x}(p)|^2+2n(t-1));
$$
\item For some $\tilde{N}>0$,
$$
\tilde{M}=\sup_{\Sigma_1} |A_{\Sigma_1}|+\sup_{t\in [1,T)}\sup_{\Sigma_t\setminus B_{\tilde{N} \sqrt{t}}} |A_{\Sigma_t}|<\infty,
$$
\end{enumerate}
then, for $t\in [1,T)$ and $p\in\Sigma_t$,
$$
\psi_\beta\left(1+|\mathbf{x}(p)|^2+2n(t-1)+\tilde{N}^2t\right)|A_{\Sigma_t}|(p) \leq \tilde{M} c^{-1}  E^O_{\Sigma_t}(p).
$$
\end{lem}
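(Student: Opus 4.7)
The plan is to split the inequality into an easy verification on an outer region and a parabolic maximum principle argument on the complementary interior. Set $\phi(p,t)=\psi_\beta(\varrho(p,t)+\tilde N^2 t)$ where $\varrho=1+|\mathbf{x}(p)|^2+2n(t-1)$. Since $\psi_\beta$ is strictly decreasing, $\phi\leq \psi_\beta(\varrho)$. Hypothesis~(2) gives $|A_{\Sigma_t}|\leq \tilde M$ on $\Sigma_t\setminus B_{\tilde N\sqrt t}$ and on all of $\Sigma_1$, while Hypothesis~(1) gives $E^O\geq c\psi_\beta(\varrho)$ throughout. Hence $\phi|A|\leq \tilde M\psi_\beta(\varrho)\leq \tilde Mc^{-1}E^O$ pointwise on $\Sigma_t\setminus B_{\tilde N\sqrt t}$ and on $\Sigma_1$; that is, on the parabolic boundary of the space-time region $\Omega=\{(p,t):p\in\Sigma_t\cap B_{\tilde N\sqrt t},\,t\in(1,T)\}$.

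For the interior of $\Omega$ I plan to apply the parabolic maximum principle to the auxiliary function $V=|A|/E^O$, which is well-defined by Lemma~\ref{RescaleMCLem} (since $E^O>0$) and smooth where $|A|>0$. Using Smoczyk's identity $(\partial_t-\Delta)E^O=|A|^2 E^O$ and the standard evolution $(\partial_t-\Delta)|A|^2=2|A|^4-2|\nabla A|^2$ together with Kato's inequality, a direct calculation produces, at points where $|A|>0$, the cancellation
$$
(\partial_t-\Delta)V\leq \frac{2}{E^O}\nabla V\cdot\nabla E^O,
$$
the leading $|A|^3/E^O$ terms vanishing identically. A chain-rule calculation — identical to the one in Lemma~\ref{RescaleMCLem} — combined with $(\partial_t-\Delta)(\varrho+\tilde N^2 t)=\tilde N^2$, $\psi_\beta'<0$, and $\psi_\beta''>0$ gives $(\partial_t-\Delta)\phi\leq 0$. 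These two inputs produce a parabolic drift-diffusion inequality for $\phi V$ to which the non-compact maximum principle used in Lemma~\ref{RescaleMCLem} (a variant of \cite[Corollary~1.1]{EHAnn}) can be applied, yielding $\phi V\leq \tilde Mc^{-1}$ globally from the parabolic-boundary bound just established.

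The main obstacle is two-fold: $V$ is only continuous (not $C^2$) where $A$ vanishes, and multiplying by the weight $\phi$ introduces a lower-order correction term when the parabolic inequality is re-expressed in terms of $\nabla(\phi V)$ rather than $\nabla V$. Both difficulties can be handled by working instead with the smooth quantity $W=\phi^2|A|^2-(\tilde Mc^{-1})^2(E^O)^2$. At any interior space-time maximum of $W$ with $W>0$, the identity $\nabla W=0$ — which reads $\nabla(\phi^2|A|^2)=2(\tilde Mc^{-1})^2 E^O\nabla E^O$ — combined with Kato's inequality $|\nabla|A|^2|^2\leq 4|A|^2|\nabla A|^2$ yields a control of the form $(\tilde Mc^{-1})^2|\nabla E^O|^2\leq 2\phi^2|\nabla A|^2+2|A|^2|\nabla\phi|^2$. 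This allows absorption of the problematic negative term $-2(\tilde Mc^{-1})^2|\nabla E^O|^2$ and the cross term $-4\phi\nabla\phi\cdot\nabla|A|^2$ appearing in $(\partial_t-\Delta)W$ against the non-negative contributions $2\phi^2|\nabla A|^2$ and $-2\phi|A|^2(\partial_t-\Delta)\phi$; together with the coefficient $2|A|^2 W$ on the right-hand side, this closes the maximum principle argument and, via the parabolic-boundary step above, delivers the claimed bound.
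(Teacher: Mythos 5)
Your overall split (outer region by hand, interior region by maximum principle) is the same as the paper's, and the quantity $u=|A|^2/(E^O)^2$ does indeed satisfy the clean drift--diffusion inequality $(\partial_t-\Delta)u\leq -2\nabla\log v\cdot\nabla u$, so the core of your approach is in the right place. However, the step where you ``close'' the maximum-principle argument for $W=\phi^2|A|^2-(\tilde Mc^{-1})^2(E^O)^2$ is incorrect, and the error is in exactly the direction that matters. From $\nabla W=0$ one gets $(\tilde Mc^{-1})^2 E^O\nabla E^O=\phi|A|^2\nabla\phi+\phi^2|A|\nabla|A|$, hence by Cauchy--Schwarz and Kato
$$
(\tilde Mc^{-1})^4(E^O)^2|\nabla E^O|^2\leq 2\phi^2|A|^2\bigl(|A|^2|\nabla\phi|^2+\phi^2|\nabla A|^2\bigr).
$$
To turn this into the absorption estimate you assert, namely $(\tilde Mc^{-1})^2|\nabla E^O|^2\leq 2\phi^2|\nabla A|^2+2|A|^2|\nabla\phi|^2$, you must divide by $(\tilde Mc^{-1})^2(E^O)^2$ and discard the factor $\phi^2|A|^2/\bigl((\tilde Mc^{-1})^2(E^O)^2\bigr)$; but at a point where $W>0$ that factor is \emph{larger} than $1$, so the inequality goes the wrong way. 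In other words, your scheme requires $W\leq0$ precisely at the hypothetical interior maximum where you have assumed $W>0$. The cross term $-4\phi\nabla\phi\cdot\nabla|A|^2$ cannot be absorbed either for the same reason.

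The paper avoids all of this by not baking the weight $\phi$ into the evolved quantity. It works directly with the smooth function $u=|A|^2/(E^O)^2$ (so there is no $C^2$-regularity issue where $A$ vanishes, another problem your $V=|A|/E^O$ formulation introduced needlessly), proves $(\partial_t-\Delta)u\leq -2\nabla\log v\cdot\nabla u$ with \emph{no} zeroth-order term, and then applies the ordinary compact maximum principle on the fixed cylinder $B_{\tilde N\sqrt t}\times[1,t]$ to conclude $\sup u$ is attained on the parabolic boundary. The weight $\psi_\beta(1+|\mathbf{x}|^2+2n(t-1)+\tilde N^2 t)$ enters only afterwards, as the number one gets when evaluating the boundary bound on $u$ and rewriting it pointwise. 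Keeping the weight out of the PDE is the key simplification you missed; with it, there is no interior-maximum absorption to justify, no $2|A|^2W$ zeroth-order coefficient to worry about, and no appeal to a non-compact Ecker--Huisken maximum principle (the domain is compact once you fix the terminal time $t$).
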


\begin{proof}
On $\Sigma_t\setminus B_{\tilde{N}\sqrt{t}}$, the desired estimate follows from our hypotheses. Next we define
$$
u=|A_{\Sigma_t}|^2v^2=|A_{\Sigma_t}|^2 |E^O_{\Sigma_t}|^{-2}.
$$
By Appendix B, (B.9) of \cite{EckerBook},
$$
\left(\frac{d}{dt}-\Delta_{\Sigma_t}\right)|A_{\Sigma_t}|^2 \leq -2|\nabla_{\Sigma_t}|A_{\Sigma_t}||^2+2|A_{\Sigma_t}|^4.
$$
A direct computation (see, e.g., (3.22)-(3.24) of \cite{BWDuke}) gives
$$
\left(\frac{d}{dt}-\Delta_{\Sigma_t}\right) u \leq -2\nabla_{\Sigma_t} \log v\cdot\nabla_{\Sigma_t} u.
$$
Thus, the maximum principle implies 
$$
\sup_{\mathcal{S}\cap \left(B_{\tilde{N}\sqrt{t}}\times[1,t]\right)}u \leq \sup_{\mathcal{S}\cap\partial_P \left(B_{\tilde{N}\sqrt{t}}\times [1,t]\right)} u
$$
where $\mathcal{S}=\bigcup_{\tau\in [1,T)}\Sigma_\tau\times\set{\tau}$ is the space-time track of the flow and
$$
\partial_P \left(B_{\tilde{N}\sqrt{t}}\times [1,t]\right)=\partial \left(B_{\tilde{N}\sqrt{t}}\times [1,t]\right)\setminus \left(B_{\tilde{N}\sqrt{t}}\times\set{t}\right).
$$
Our hypotheses ensure that
$$
\sup_{\mathcal{S}\cap\partial_P \left(B_{\tilde{N}\sqrt{t}}\times [1,t]\right)} u \leq \frac{\tilde{M}^2}{ c^2 \psi_\beta^{2}\left(1+\tilde{N}^2t+2n(t-1)\right)}.
$$
Hence the desired estimate holds on $\Sigma_t\cap B_{\tilde{N}\sqrt{t}}$ as well.
\end{proof}

With a minor modification of the proof of \cite[Proposition 4.5]{BWDuke}, we use Lemma \ref{BoundCurvLem} to prove the long time existence of certain expander mean-convex MCFs of low entropy.

\begin{prop} \label{LongtimeProp}
For $n\geq 2$, let $\Sigma_1$ be a $C^2$-hypersurface in $\mathbb{R}^{n+1}$ and assume $\Sigma_1$ has no closed connected components and $\lambda[\Sigma_1]<\lambda[\mathbb{S}^{n-1}\times\mathbb{R}]$. Suppose $T\in (0,\infty]$ is the maximal existence time of the MCF $\set{\Sigma_t}_{t\in [1,T)}$ starting from $\Sigma_1$. If the following holds: 
\begin{enumerate}
\item For some $c, \beta>0$, by a suitable choice of the unit normal on $\Sigma_t$, for $t\in [1,T)$ and $p\in\Sigma_t$,
$$
E^{O}_{\Sigma_t}(p) \geq c \psi_\beta(1+|\mathbf{x}(p)|^2+2n(t-1));
$$
\item For some $\tilde{N}>0$,
$$
\tilde{M}=\sup_{\Sigma_1} |A_{\Sigma_1}|+\sup_{t\in [1,T)}\sup_{\Sigma_t\setminus B_{\tilde{N} \sqrt{t}}} |A_{\Sigma_t}|<\infty,
$$
\end{enumerate}
then $T=\infty$.
\end{prop}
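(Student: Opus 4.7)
The plan is to argue by contradiction: assume $T < \infty$ and extract a tangent flow at the first singular time whose entropy bound forces it to be a multiplicity-one hyperplane, then apply Brakke regularity to obtain a contradiction.

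First, I would localize the (putative) singularity. By Hypothesis~(2), $|A_{\Sigma_t}|$ is uniformly bounded on $\Sigma_t \setminus B_{\tilde N \sqrt t}$ throughout $[1, T)$, so any blow-up of the second fundamental form as $t \to T^-$ must concentrate at some space-time point $(\mathbf{x}_0, T)$ with $|\mathbf{x}_0| \leq \tilde N \sqrt T$. Around such a point I would perform Huisken's parabolic rescaling: pick a sequence $\lambda_i \to \infty$ and consider the rescaled flows $\lambda_i \bigl(\Sigma_{T + \lambda_i^{-2} s} - \mathbf{x}_0\bigr)$. Huisken's monotonicity formula \cite{Huisken} together with Brakke compactness yields a subsequential tangent flow, and self-similarity identifies the limit as the flow generated by an integral self-shrinker $\hat \Sigma$. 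By lower semi-continuity of entropy and its monotonicity along MCF,
\[
\lambda[\hat \Sigma] \;\leq\; \lambda[\Sigma_1] \;<\; \lambda[\mathbb{S}^{n-1} \times \Real] \;<\; 2,
\]
which in particular forces the tangent flow to be of multiplicity one. Hypothesis~(1), via the pointwise estimate of Lemma~\ref{BoundCurvLem}, supplies the local uniform bounds needed to carry out this rescaling-compactness argument cleanly near $(\mathbf{x}_0, T)$.

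The main obstacle --- and what makes this a minor modification of the proof of \cite[Proposition~4.5]{BWDuke} rather than a verbatim repetition --- is to rule out every non-flat possibility for $\hat \Sigma$ in all relevant dimensions, since \cite{BWDuke} only handled $n = 2$. For this I would invoke the entropy classification of self-shrinkers \cite{BWInvent, JZhu}, which asserts that every smooth self-shrinker in $\Real^{n+1}$ with entropy strictly less than $\lambda[\mathbb{S}^{n-1} \times \Real]$ must be a hyperplane. Once $\hat \Sigma$ is identified as a multiplicity-one hyperplane, White's sharp version of Brakke's regularity theorem implies that $\{\Sigma_t\}$ is smooth in a backward parabolic neighborhood of $(\mathbf{x}_0, T)$, contradicting the assumed curvature blow-up. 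Hence no finite singular time can occur, and $T = \infty$.
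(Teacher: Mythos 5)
Your overall strategy---contradiction via a tangent flow at a finite singular time, with the entropy bound forcing the tangent to be smooth and simple enough to classify---is the right framework and matches the paper's. However, the classification step contains a genuine error that derails the argument.

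You assert that every smooth self-shrinker in $\Real^{n+1}$ with entropy strictly below $\lambda[\mathbb{S}^{n-1}\times\Real]$ is a hyperplane, citing \cite{BWInvent, JZhu}. This is false: the round sphere $\mathbb{S}^n$ is a self-shrinker with $\lambda[\mathbb{S}^n]<\lambda[\mathbb{S}^{n-1}\times\Real]$. The cited results say that the round sphere \emph{minimizes} entropy among closed self-shrinkers, which is a different statement; there is no classification of all self-shrinkers below the cylinder threshold. Consequently, your blow-up limit $\hat\Sigma$ could a priori be the sphere (or, absent more information, some other shrinker), and the jump to ``$\hat\Sigma$ is a hyperplane, hence Brakke regularity gives a contradiction'' does not follow.

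What the paper does instead---and what your proof is missing---is to extract more structure from Hypothesis~(1). Using Lemma~\ref{BoundCurvLem}, the expander mean curvature lower bound yields the pointwise inequality $|A_\Gamma|\leq 2CT\,H_\Gamma$ on the regular part of the tangent $\Gamma$, so $\Gamma$ is mean-convex. Full regularity of $\Gamma$ is obtained not just from the entropy bound being less than $2$ but from a dimension-reduction argument, the regularity of mod-$2$ flat chains, and Allard's theorem. With mean-convexity and $1<\lambda[\Gamma]<\lambda[\mathbb{S}^{n-1}\times\Real]$ in hand, \cite[Theorem~0.14]{CMGen} forces $\Gamma$ to be the self-shrinking sphere. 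The contradiction then comes from the hypothesis that $\Sigma_1$ has no closed connected components---a hypothesis your argument never uses, which is another signal that a step is missing---since the multiplicity-one convergence of the $\Gamma_{s_i}$, none of which have closed components, cannot produce a closed sphere. You should replace the false classification claim with the mean-convexity estimate from Lemma~\ref{BoundCurvLem}, the application of \cite[Theorem~0.14]{CMGen}, and the topological contradiction via the ``no closed components'' hypothesis.
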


\begin{proof}
We argue by contradiction. If $T<\infty$, then Hypothesis (2) implies
$$
\lim_{t\to T} \sup_{\Sigma_t\cap B_{\tilde{N}\sqrt{T}}} |A_{\Sigma_t}|=\infty.
$$
Thus, by Huisken's \cite{Huisken} monotonicity formula and Brakke's \cite{Brakke} (cf. \cite{WhiteReg}) regularity theorem, there is an $\mathbf{x}_0\in\bar{B}_{\tilde{N}\sqrt{T}}$ so that the rescaled MCF about $X_0=(\mathbf{x}_0,T)$,
$$
\Gamma_s=(T-t)^{-\frac{1}{2}} (\Sigma_t-\mathbf{x}_0), \, s=-\log(T-t),
$$
satisfies that, for some sequence $s_i\to\infty$, the $\Gamma_{s_i}$ converge, as integral varifolds, to a multiplicity-one $F$-stationary varifold, $\Gamma$, with $1<\lambda[\Gamma]<\lambda[\mathbb{S}^{n-1}\times\Real]$. 

By the hypotheses and Lemma \ref{BoundCurvLem}, there is a constant $C=C(n,c,\beta,\tilde{N},\tilde{M},T)>0$ so that, for any $p\in\Gamma_{s_i}\cap B_{R_i}$ where $R_i=e^{\frac{s_i}{2}}$,
$$
|A_{\Gamma_{s_i}}|(p) \leq C\left(2(T-e^{-s_i})H_{\Gamma_{s_i}}(p)+e^{-\frac{s_i}{2}} (\mathbf{x}_0+e^{-\frac{s_i}{2}}\mathbf{x}(p))\cdot\mathbf{n}_{\Gamma_{s_i}}(p)\right).
$$
Passing $s_i\to\infty$ and invoking Brakke's regularity theorem again, one has
$$
|A_\Gamma|\leq 2CTH_\Gamma \mbox{ on the regular set $\mathrm{Reg}(\Gamma)$}.
$$
As $\lambda[\mathbb{S}^{n-1}\times\mathbb{R}]<2$, it follows from standard dimension reduction arguments \cite[Theorem 4]{WhiteStrata}, regularity of rectifiable mod $2$ flat chains \cite{WhiteMod2} and Allard's regularity theorem \cite[Theorem 24.4]{Simon}, that $\Gamma$ is regular everywhere -- cf. \cite[Proposition 5.1]{CIMW}.

Hence, $H_\Gamma$ does not change sign and, as $1<\lambda[\Gamma]<\lambda[\mathbb{S}^{n-1}\times\mathbb{R}]$, it follows from \cite[Theorem 0.14]{CMGen} that $\Gamma$ is the self-shrinking sphere. As each $\Gamma_{s_i}$ has no closed connected components, neither does $\Gamma$. Thus, $\Gamma$ cannot be a sphere, giving a contradiction.
\end{proof}

\begin{proof}[Proof of Proposition \ref{FlowProp}]
Fix a transverse section $\mathbf{v}$ on $\Sigma$ so that $\mathbf{v}\in C^{k,\alpha}_0\cap C_{0,\mathrm{H}}^k(\Sigma;\mathbb{S}^{n})$. For $\Sigma\in\mathcal{ACH}^{k,\alpha}_n$, there is an open neighborhood $U$ of $\Sigma$ and a $C^{k,\alpha}$ diffeomorphism $\Phi_{\mathbf{v}}\colon\Sigma\times (-\epsilon,\epsilon)\to U$ given by 
$$
\Phi_\mathbf{v}(p,\tau)=\mathbf{x}(p)+\tau\mathbf{v}(p).
$$
Using $\Phi_\mathbf{v}$, the MCF starting from $\Sigma_1$ can be expressed as a quasi-linear parabolic equation on $\Sigma$ with initial data $0$. Thus, by standard parabolic theory (e.g., \cite{LSU} or \cite{EHInvent}), there is a unique MCF, $\set{\Sigma_t}_{t\in [1,T)}$ with $\Sigma_1=\Sigma$ and $T$ the maximal existence time. As each $\Sigma_t$ is properly embedded in $\Real^{n+1}$,  there is a one-parameter family of open subsets of $\mathbb{R}^{n+1}$, $\set{\Omega_t}_{t\in [1,T)}$ so $\partial \Omega_t=t^{-\frac{1}{2}}\Sigma_t$ and so the outward unit normals $\mathbf{n}_{\Sigma_t}$ to $\Omega_t$ are continuous in $t$. Hence, by the hypotheses on $\Sigma$, one appeals to Lemmas \ref{AsympFlowLem} and \ref{RescaleMCLem} and Proposition \ref{LongtimeProp} to see the flows exists for all times, i.e., $T=\infty$, and Items \eqref{AsympConeItem} and \eqref{RescaleMCItem}. 
 
To analyze the asymptotic behaviors of the flow at $t=\infty$, we define 
$$
\Gamma_s=t^{-\frac{1}{2}}\Sigma_t \mbox{ and } K_s=\mathrm{cl}(\Omega_t) \mbox{ where } s=\log t.
$$
Thus, $\set{\Gamma_s}_{s\geq 0}$ satisfies the rescaled MCF equation
$$
\left(\frac{\partial\mathbf{x}}{\partial s}\right)^\perp=\mathbf{H}_{\Gamma_s}-\frac{\mathbf{x}^\perp}{2}.
$$
Observe, by Item \eqref{RescaleMCItem}, the expander mean curvature of $\Gamma_s$, $\mathbf{H}_{\Gamma_s}-\frac{1}{2}\mathbf{x}^\perp$, points into $K_s$. Thus, $K_{s^\prime}\subset\mathrm{int}(K_s)$ for all $s^\prime>s\geq 0$.

We consider the translation in time of $\set{\Gamma_s}_{s\geq 0}$ by $\tau>0$,
$$
\set{\Gamma_s^\tau}_{s\geq 0}=\set{\Gamma_{s+\tau}}_{s\geq 0},
$$
which is also a rescaled MCF. As $\lambda[\Sigma]<\Lambda^*_n<2$, it follows from Huisken's \cite{Huisken} monotonicity formula and the scaling invariance of entropy that $\lambda[\Gamma^\tau_s]<\Lambda^*_n$. Thus, by Brakke's \cite{Brakke} compactness theorem (see also \cite[Section 7]{IlmanenElliptic}), given a sequence $\tau_{i}\to\infty$ there is a subsequence $\tau_{i_j}$ so that, for every $s\geq 0$, 
\begin{equation} \label{ConvergeEqn}
\lim_{j\to\infty}\mathcal{H}^n\lfloor\Gamma^{\tau_{i_j}}_s=\mu_s 
\end{equation}
where $\set{\mu_s}_{s\geq 0}$ is a one-parameter family of multiplicity-one rectifiable Radon measures satisfying the rescaled MCF equation in Brakke's sense -- see \cite[Section 11]{WhiteStrata} for the precise definition. Moreover, by the monotonicity of $K_s$ and the upper semi-continuity of Gaussian density function, one has for all $s\geq 0$
$$
\mathrm{spt}(\mu_s)=\partial K \mbox{ where $K=\bigcap_{s\geq 0} K_s$}.
$$
In particular, $\mu_s=\mathcal{H}^n\lfloor\partial K$ for all $s\geq 0$, and $\set{\mu_s}_{s\geq 0}$ is a static solution of the rescaled MCF. Consequently, the convergence \eqref{ConvergeEqn} can be taken for all $\tau\to\infty$.

Furthermore, by Huisken's \cite{Huisken} monotonicity formula, all tangent flows of $\set{\mu_s}_{s\geq 0}$ are multiplicity-one static minimal (hyper)cones in $\mathbb{R}^{n+1}$. As $\lambda[\mu_s]<\Lambda_n^*$, it follows from White's stratification theorem, \cite[Theorem 4]{WhiteStrata}, that these minimal cones have at most $(n-3)$-dimensional spines and so the singular set of $\set{\mu_s}_{s\geq 0}$ has parabolic Hausdorff dimension at most $n-1$. As the flow is static, the varifold $V_{\partial K}$ associated to $\partial K$ is a multiplicity-one $E$-stationary varifold and the singular set of $V_{\partial K}$ has Hausdorff dimension at most $n-3$. Moreover, by our previous discussion, $\set{\partial K_s}_{s\geq 0}$ form a foliation of a neighborhood of $\partial K$ in $\mathbb{R}^{n+1}\setminus\mathrm{int}(K)$ so that $\mathbf{H}_{\partial K_s}-\frac{1}{2}\mathbf{x}^\perp$ points into $K_s$. Thus, it follows from the maximum principle of Solomon-White \cite{SolomonWhite} that $V_{\partial K}$ is locally one-sided $E$-minimizing (strictly speaking one should think of $K$ as a set of locally finite perimeter and this set is one-sided locally $E$-minimizing).  As such, the regular part of $V_{\partial K}$ is $E$-stable and so it follows from Schoen-Simon's \cite{SchoenSimon} regularity theorem that the Hausdorff dimension of the singular set of $V_{\partial K}$ is at most $n-7$. Moreover, it follows from Lemma \ref{AsympFlowLem} and the Arzel\`{a}-Ascoli theorem that the tangent cone of $V_{\partial K}$ at infinity is equal to $\mathcal{C}(\Sigma)$. Hence it follows from the entropy bound and Lemma \ref{RegLem} that $\partial K\in\mathcal{E}_S(\mathcal{C}(\Sigma))$. Set $\Gamma=\partial K$. As $\Gamma$ is smooth, Brakke's \cite{Brakke} regularity theorem (see also \cite{WhiteReg}) implies that, as $s\to\infty$, the $\Gamma_s$ converge locally smoothly to $\Gamma$. That is,
$$
\lim_{t\to\infty} t^{-\frac{1}{2}} \Sigma_t=\Gamma \mbox{ in $C^\infty_{loc}(\mathbb{R}^{n+1})$}.
$$
Moreover, it follows from Lemma \ref{AsympFlowLem} and the locally smooth convergence that, for any  transverse section $\mathbf{w}$ on $\Gamma$ with $\mathbf{w}\in C^{k,\alpha}_0\cap C^k_{0,\mathrm{H}}(\Gamma;\mathbb{S}^{n})$, there is a large $s_0>1$ so that if $s>s_0$, then there is a function $w_s\in C^{k,\alpha}_1\cap C^0_{-1}(\Gamma)$ so that if
$$
\mathbf{f}_s(p)=\mathbf{x}|_{\Gamma}(p)+w_s(p) \mathbf{w}(p)
$$
then $\mathbf{f}_s\in \mathcal{ACH}^{k,\alpha}_n(\Gamma)$ is a parametrization of $\Gamma_s$ and
$$
\lim_{s\to \infty} \Vert \mathbf{f}_s-\mathbf{x}|_{\Gamma} \Vert_{1}^{(1)}=0.
$$

Finally, we show that $\Gamma$ and $\Sigma$ are $C^{k,\alpha}$ a.c.-isotopic with fixed cone. First of all, by Lemma \ref{IsotopyLem} and the above observation, there is a value $s_1>s_0$ for which $\Gamma_{s_1}$ is $C^{k,\alpha}$ a.c.-isotopic with fixed cone to $\Gamma$ and in fact $\Gamma_s$ is a.c.-isotopic to $\Gamma$ for all $s>s_1$. It follows from Lemma \ref{AsympFlowLem} and basic continuity properties of the MCF that for any $s\in [1,\infty)$ there is an $\epsilon_s>0$ so, via the path given by Lemma \ref{IsotopyLem}, $\Gamma_{s^\prime}$ is $C^{k,\alpha}$ a.c.-isotopic with fixed cone to $\Gamma_s$ for any $s^\prime\in [1,\infty)\cap (s-\epsilon_s, s+\epsilon_s)$.  As $[1,s_1]$ is compact, this implies that there are a finite set of times $1=s^\prime_0<s^\prime_1<\ldots s^\prime_m=s_1$ so $\Gamma_{s_i^\prime}$ and $\Gamma_{s_{i+1}'}$ are $C^{k,\alpha}$ a.c.-isotopic with fixed cone. Hence, composing these finitely many a.c.-isotopies one concludes that $\Gamma$ and $\Sigma$ are $C^{k,\alpha}$ a.c.-isotopic with fixed cone, finishing the proof.
\end{proof}

\section{Deformation of unstable self-expanders} \label{DeformUnstableExpanderSec}
The main result of this section is the following:

\begin{thm} \label{UnstableExpanderThm}
For $n,k\geq 2$ and $\alpha\in (0,1)$, let $\Gamma\in\mathcal{ACH}^{k,\alpha}_n$ be an unstable self-expander and assume $\lambda[\Gamma]<\Lambda^*_n$. There are stable self-expanders $\Gamma_-$ and $\Gamma_+$ in $\mathcal{ACH}^{k,\alpha}_n$ with $\mathcal{C}(\Gamma_-)=\mathcal{C}(\Gamma_+)=\mathcal{C}(\Gamma)$ so that $\Gamma_- \preceq \Gamma \preceq \Gamma_+$ and both $\Gamma_-$ and $\Gamma_+$ are $C^{k,\alpha}$ a.c.-isotopic with fixed cone to $\Gamma$. Moreover, if $\Gamma^\prime\in\mathcal{ACH}^{k,\alpha}_n$ is a stable self-expander with $\mathcal{C}(\Gamma^\prime)=\mathcal{C}(\Gamma)$ and $\Gamma^\prime\preceq\Gamma$ (respectively, $\Gamma\preceq\Gamma^\prime$), then one may choose $\Gamma_-$ (respectively, $\Gamma_+$) to have the additional property that $\Gamma^\prime\preceq\Gamma_-\preceq\Gamma$ (respectively, $\Gamma\preceq\Gamma_+\preceq\Gamma^\prime$).
\end{thm}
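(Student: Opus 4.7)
Proposal. The strategy is to perturb $\Gamma$ by a first eigenfunction of its stability operator to obtain hypersurfaces on the two sides of $\Gamma$ whose rescaled mean curvature has a definite sign, and then feed these into Proposition~\ref{FlowProp} to produce $\Gamma_\pm$ as long-time limits. Since $\Gamma$ is unstable, $\mathrm{ind}(\Gamma)\geq 1$, so the largest eigenvalue $\mu_1$ of $L_\Gamma$ is strictly positive. A standard Perron--Frobenius type argument, applied to $L_\Gamma$ regarded as self-adjoint with respect to $B_\Gamma$, yields a smooth, strictly positive eigenfunction $\phi$ with $L_\Gamma\phi=\mu_1\phi$. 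Combining the weighted $L^2$-integrability of $\phi$ with respect to $e^{|\mathbf{x}|^2/4}d\mathcal{H}^n$ with the drift structure of $L_\Gamma$ at infinity and weighted Schauder estimates, one deduces that $\phi$ and its derivatives through order two decay faster than $e^{-a|\mathbf{x}|^2}$ for some $a>0$.

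For small $\epsilon>0$, define
\[
\Gamma_\pm^\epsilon=\{\mathbf{x}(p)\pm\epsilon\phi(p)\mathbf{n}_\Gamma(p):p\in\Gamma\},
\]
where $\mathbf{n}_\Gamma$ points into $\Omega_+(\Gamma)$. The super-exponential decay of $\phi$ ensures $\Gamma_\pm^\epsilon\in\mathcal{ACH}^{k,\alpha}_n$ with $\mathcal{C}(\Gamma_\pm^\epsilon)=\mathcal{C}(\Gamma)$, and Lemma~\ref{IsotopyLem} applied to the straight-line path provides a $C^{k,\alpha}$ a.c.-isotopy with fixed cone from $\Gamma$ to $\Gamma_\pm^\epsilon$. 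A direct computation shows that the linearization of the scalar $E^{O,1}_\Sigma=2H_\Sigma+\mathbf{x}\cdot\mathbf{n}_\Sigma$ at the self-expander $\Gamma$, with respect to a normal variation $f\mathbf{n}_\Gamma$, equals $-2L_\Gamma f$. Taking $f=\pm\phi$ and orienting $\Gamma_\pm^\epsilon$ so that the chosen $\Omega$ (namely $\Omega_+(\Gamma_+^\epsilon)$ for $\Gamma_+^\epsilon$ and $\Omega_-(\Gamma_-^\epsilon)$ for $\Gamma_-^\epsilon$) has outward normal pointing back toward $\Gamma$, this linearization together with the exponential decay of $\phi$, which dominates the quadratic Taylor remainder uniformly, yields the pointwise bound
\[
E^{O,1}_{\Gamma_\pm^\epsilon}(p)\geq 2\epsilon\mu_1\phi(p)-C\epsilon^2\phi(p)\geq c\,\psi_\beta(1+|\mathbf{x}(p)|^2)
\]
for suitable $\beta,c>0$. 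Combined with the cone-neighborhood estimate from Proposition~\ref{UniversalBarrierProp} applied to $\mathcal{C}(\Gamma)$ and the entropy bound $\lambda[\Gamma_\pm^\epsilon]<\Lambda_n^*$ inherited by continuity from $\lambda[\Gamma]<\Lambda_n^*$, this verifies the hypotheses of Proposition~\ref{FlowProp}. That proposition then produces stable self-expanders $\Gamma_\pm\in\mathcal{ACH}^{k,\alpha}_n$ with $\mathcal{C}(\Gamma_\pm)=\mathcal{C}(\Gamma)$ as the long-time limits of the corresponding MCFs, together with $C^{k,\alpha}$ a.c.-isotopies with fixed cone from $\Gamma_\pm^\epsilon$ to $\Gamma_\pm$; concatenation yields the required a.c.-isotopy from $\Gamma$ to $\Gamma_\pm$. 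Monotonicity of $\Omega_t$ in Proposition~\ref{FlowProp}(3), together with the avoidance principle between the MCF of $\Gamma_\pm^\epsilon$ and the static rescaled MCF generated by $\Gamma$, which prevents crossings of $\Gamma$, gives the partial-order statements $\Gamma_-\preceq\Gamma\preceq\Gamma_+$.

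For the sandwich refinement, suppose $\Gamma'\in\mathcal{ACH}^{k,\alpha}_n$ is a stable self-expander with $\mathcal{C}(\Gamma')=\mathcal{C}(\Gamma)$ and $\Gamma'\preceq\Gamma$. Since $\Gamma$ is unstable, $\Gamma'\neq\Gamma$, so the strong maximum principle for the self-expander equation forces $\Gamma$ and $\Gamma'$ to be disjoint, and writing both as graphs over $\mathcal{C}(\Gamma)$ at infinity gives an at-worst-polynomial lower bound on their separation. The super-exponential decay of $\phi$ then guarantees that, for all sufficiently small $\epsilon>0$, $\Gamma_-^\epsilon$ lies strictly between $\Gamma'$ and $\Gamma$, i.e., $\Gamma'\preceq\Gamma_-^\epsilon\preceq\Gamma$. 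A second application of the avoidance principle, comparing the MCF of $\Gamma_-^\epsilon$ with the static rescaled MCF of $\Gamma'$, propagates this ordering along the flow and yields $\Gamma'\preceq\Gamma_-$. The argument for $\Gamma\preceq\Gamma'\Rightarrow\Gamma\preceq\Gamma_+\preceq\Gamma'$ is symmetric.

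The principal obstacle is the decay analysis for $\phi$ and its translation into the $\psi_\beta$ lower bound on $E^{O,1}_{\Gamma_\pm^\epsilon}$; the bound must hold uniformly over $\Gamma_\pm^\epsilon$ and must absorb the nonlinear Taylor remainder, which is the reason we need quantitative decay of $\phi$ (rather than merely super-polynomial decay). A secondary technical point is ensuring that the perturbation $\epsilon\phi$ decays rapidly enough, relative to the separation between $\Gamma$ and $\Gamma'$ at infinity, to produce the sandwich property in the final step.
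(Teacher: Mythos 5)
Your overall strategy — perturb by a first eigenfunction, run the flow via Proposition~\ref{FlowProp}, and use a comparison argument for the sandwich — is the same as the paper's, but there are two genuine gaps.

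First, your Perron--Frobenius step fails if $\Gamma$ is disconnected, which the theorem allows. If $\Gamma$ has several components, the largest eigenvalue of $L_\Gamma$ is achieved on one unstable component and the corresponding eigenfunction vanishes identically on the remaining components. In particular, if $\Gamma$ also has a \emph{stable} component $\Gamma^j$, then $\phi\equiv 0$ there, $\Gamma_\pm^\epsilon$ does not move $\Gamma^j$, and $E^{O,1}_{\Gamma^j}\equiv 0$, so Hypothesis~(2) of Proposition~\ref{FlowProp} fails on that piece and the proposition cannot be applied to $\Gamma_\pm^\epsilon$ as a whole. The paper avoids this by choosing, on each unstable component $\Gamma^j$, the first eigenfunction $f_j$ of $L_{\Gamma^j}$ (so it is positive on that component), setting the perturbation to zero on stable components, applying Proposition~\ref{FlowProp} to each unstable component separately, and letting stable components evolve in the self-expanding way.

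Second, your justification of the sandwich step rests on an incorrect separation estimate. If $\Gamma'\preceq\Gamma$, $\Gamma'\neq\Gamma$, and both are asymptotic to the same cone, the separation between them does \emph{not} have an at-worst-polynomial lower bound; by Proposition~\ref{InitialBarrierProp} it is comparable to $r^{-n-1}e^{-r^2/4}$, i.e., it decays at essentially the Gaussian rate, because the two hypersurfaces share an asymptotic cone. A polynomial lower bound would contradict $\mathcal{C}(\Gamma')=\mathcal{C}(\Gamma)$. The fact that $\epsilon f_j$ fits below the separation for small $\epsilon$ relies on the finer point that the eigenfunction, decaying like $r^{-(n+1)+2\mu_j}e^{-r^2/4}$ with $\mu_j<0$, has a strictly smaller polynomial prefactor than the separation — so the comparison is between two Gaussians and is delicate, not automatic. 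Relatedly, the strong maximum principle does not force $\Gamma\cap\Gamma'=\emptyset$ when $\Gamma$ is disconnected: $\Gamma$ and $\Gamma'$ may share entire components, which the paper handles explicitly by splitting off $\hat\Gamma=\Gamma\cap\Gamma'$ and running a connectedness argument (the set $S$) with a non-compact strong maximum principle in place of a bare appeal to avoidance.
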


To prove Theorem \ref{UnstableExpanderThm} we will need several auxiliary results. The first is the continuity of entropy under small $C^2_1$ perturbations.

\begin{lem} \label{EntropyContLem}
Given $\Gamma\in\mathcal{ACH}^{2}_n$ and $\delta>0$, there exists a $\tau=\tau(\Gamma,\delta)>0$ so that if $\Vert\mathbf{f}-\mathbf{x}|_\Gamma\Vert^{(1)}_2<\tau$, then $|\lambda[\mathbf{f}(\Gamma)]-\lambda[\Gamma]|<\delta$.
\end{lem}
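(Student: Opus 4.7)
The plan is to combine a uniform tail estimate---localizing the supremum defining $\lambda[\mathbf{f}(\Gamma)]$ to a compact set of dilation-translation parameters---with a dominated convergence argument on that compact set. Writing the Gaussian area via the pullback by $\mathbf{f}$ gives
\begin{equation*}
F[\rho\mathbf{f}(\Gamma)+\mathbf{y}]=(4\pi)^{-n/2}\int_\Gamma \rho^n e^{-|\rho\mathbf{f}(p)+\mathbf{y}|^2/4}J\mathbf{f}(p)\,d\mathcal{H}^n(p),
\end{equation*}
where $J\mathbf{f}(p)$ is the tangential Jacobian. The hypothesis $\Vert\mathbf{f}-\mathbf{x}|_\Gamma\Vert_2^{(1)}<\tau$ yields, by the definition of the norm, the pointwise bounds $|\mathbf{f}(p)-\mathbf{x}(p)|\leq \tau(1+|\mathbf{x}(p)|)$, $|\nabla_\Gamma(\mathbf{f}-\mathbf{x}|_\Gamma)(p)|\leq\tau$, and $|\nabla_\Gamma^2(\mathbf{f}-\mathbf{x}|_\Gamma)(p)|\leq\tau(1+|\mathbf{x}(p)|)^{-1}$, which imply $|J\mathbf{f}-1|\leq C(n)\tau$ and suitable curvature bounds on $\mathbf{f}(\Gamma)$.

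First I would establish a tail estimate: there exist a compact $K\subset\mathbb{R}^{n+1}\times(0,\infty)$ and constants $\tau_0,\eta>0$ so that if $\Vert\mathbf{f}-\mathbf{x}|_\Gamma\Vert_2^{(1)}<\tau_0$, then
\begin{equation*}
\sup_{(\mathbf{y},\rho)\notin K}F[\rho\mathbf{f}(\Gamma)+\mathbf{y}]\leq \lambda[\Gamma]-\eta.
\end{equation*}
Three asymptotic regimes must be controlled outside $K$: for $|\mathbf{y}|$ large with $\rho$ bounded, the Gaussian weight forces $F\to 0$ uniformly in $\mathbf{f}$, using the uniform area growth $\mathcal{H}^n(\mathbf{f}(\Gamma)\cap B_R)\leq C(\Gamma) R^n$ following from the $C^2_1$-bound; for $\rho$ large, the curvature of $\rho\mathbf{f}(\Gamma)$ is uniformly small (from the weighted $C^2$-estimate), so a Taylor expansion at a regular point gives $\sup_\mathbf{y} F[\rho\mathbf{f}(\Gamma)+\mathbf{y}]\to 1$; and for $\rho$ small, $\rho\mathbf{f}(\Gamma)$ converges locally smoothly to the perturbed cone $\mathcal{C}(\mathbf{f}(\Gamma))$, giving $\sup_\mathbf{y} F[\rho\mathbf{f}(\Gamma)+\mathbf{y}]\to\lambda[\mathcal{C}(\mathbf{f}(\Gamma))]$. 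On the compact set $K$, the integrand in the displayed formula is uniformly dominated by $C(\Gamma,K)e^{-c|\mathbf{x}(p)|^2/8}\in L^1(\Gamma,d\mathcal{H}^n)$ for $\tau$ small, and converges pointwise in $p$ as $\mathbf{f}\to\mathbf{x}|_\Gamma$ in $C^2_1$; dominated convergence, together with equicontinuity in $(\mathbf{y},\rho)\in K$, yields
\begin{equation*}
\sup_{(\mathbf{y},\rho)\in K}\bigl|F[\rho\mathbf{f}(\Gamma)+\mathbf{y}]-F[\rho\Gamma+\mathbf{y}]\bigr|\longrightarrow 0\quad\text{as }\tau\to 0.
\end{equation*}
Taking sups and combining the two estimates gives $|\lambda[\mathbf{f}(\Gamma)]-\lambda[\Gamma]|<\delta$ for $\tau$ small.

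The main obstacle is the third tail regime, $\rho\to 0$, in the case $\lambda[\Gamma]=\lambda[\mathcal{C}(\Gamma)]$---which holds for self-expanders by \cite[Lemma 3.5]{BWProperness} and is precisely the setting in which the lemma is applied. Here $\sup_\mathbf{y} F[\rho\mathbf{f}(\Gamma)+\mathbf{y}]\to\lambda[\mathcal{C}(\mathbf{f}(\Gamma))]$, which may be arbitrarily close to $\lambda[\Gamma]$, so the tail cannot simply be excluded from $K$. The remedy is to apply the same compactness scheme at the level of cones: since $\mathcal{C}(\mathbf{f}(\Gamma))$ is parameterized by the compact link $\mathcal{L}(\Gamma)$ (so the $\rho\to 0$ tail does not reappear, as $\rho\mathcal{C}=\mathcal{C}$), and $\mathrm{tr}_\infty^1[\mathbf{f}]$ is $C^2$-close to $\mathbf{x}|_{\mathcal{L}(\Gamma)}$ by the continuity of the trace map from Section \ref{TraceSubsec}, the link-based version of the argument yields $|\lambda[\mathcal{C}(\mathbf{f}(\Gamma))]-\lambda[\mathcal{C}(\Gamma)]|\to 0$ as $\tau\to 0$, which combined with the convergence in regime (iii) closes the argument. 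The analogous borderline case $\lambda[\Gamma]=1$ is handled in the same spirit, noting that perturbations of a plane are governed by known entropy estimates.
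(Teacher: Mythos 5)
Your overall scheme---localize the supremum to a compact parameter set $K$ and use dominated convergence there---is close in spirit to the paper's, but the specific tail estimate you lead with,
\begin{equation*}
\sup_{(\mathbf{y},\rho)\notin K}F[\rho\mathbf{f}(\Gamma)+\mathbf{y}]\leq \lambda[\Gamma]-\eta,
\end{equation*}
is false in general, and you cannot build the argument around it. You correctly spot the problem in the $\rho\to 0$ regime: whenever $\lambda[\Gamma]=\lambda[\mathcal{C}(\Gamma)]$ (which includes every self-expander), the supremum defining $\lambda[\Gamma]$ is only attained in the $\rho\to 0$ limit, so $F[\rho\mathbf{f}(\Gamma)+\mathbf{y}]$ approaches $\lambda[\mathcal{C}(\mathbf{f}(\Gamma))]\approx\lambda[\Gamma]$ arbitrarily far outside any compact $K$, and no strict improvement $\eta>0$ is possible. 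But rather than patching this by launching a second, parallel continuity argument for the cone entropy $\lambda[\mathcal{C}[\cdot]]$ (which multiplies the work, and requires you to decide where the maximizer for $\lambda[\Gamma]$ sits, a case analysis you gloss over), you should treat the two inequalities in $|\lambda[\mathbf{f}(\Gamma)]-\lambda[\Gamma]|<\delta$ asymmetrically.

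The lower bound $\lambda[\mathbf{f}(\Gamma)]>\lambda[\Gamma]-\delta$ needs no tail analysis at all: just fix one near-maximizer $(\rho_0,\mathbf{y}_0)$ with $F[\rho_0\Gamma+\mathbf{y}_0]>\lambda[\Gamma]-\delta/2$ and use continuity of $F$ at that single point (this is your dominated-convergence step, but applied at one parameter value) to get $F[\rho_0\mathbf{f}(\Gamma)+\mathbf{y}_0]>\lambda[\Gamma]-\delta$ for small $\tau$. With the lower bound secured this way, the tail estimate for the upper bound only needs to deliver $F[\rho\mathbf{f}(\Gamma)+\mathbf{y}]<\lambda[\Gamma]+\delta$ in each regime, never a strict improvement below $\lambda[\Gamma]$. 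In the small-$\rho$ regime this is then a one-line consequence of the $C^2_{loc}$ convergence $\rho\Gamma\to\mathcal{C}(\Gamma)$ together with the lower semi-continuity inequality $\lambda[\mathcal{C}(\Gamma)]\leq\lambda[\Gamma]$---there is no need to compare $\lambda[\mathcal{C}(\mathbf{f}(\Gamma))]$ with $\lambda[\mathcal{C}(\Gamma)]$ at all. The large $\rho+|\mathbf{y}|$ regime and the intermediate compact region are then handled essentially as you describe. Reorganizing in this way removes the recursion onto the cone case and the unproven uniformity issues in your patch.
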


\begin{proof}
First, by the definition of entropy, there is a $\rho_0>0$ and $\mathbf{y}_0\in\mathbb{R}^{n+1}$ so that
$$
F[\rho_0\Gamma+\mathbf{y}_0]>\lambda[\Gamma]-\frac{\delta}{2}.
$$
Thus, for sufficiently small $\tau$, 
$$
F[\rho_0\mathbf{f}(\Gamma)+\mathbf{y}_0]>F[\rho_0\Gamma+\mathbf{y}_0]-\frac{\delta}{2}.
$$
Hence, combining these gives
$$
\lambda[\mathbf{f}(\Gamma)]\geq F[\rho_0\mathbf{f}(\Gamma)+\mathbf{y}_0]>\lambda[\Gamma]-\delta.
$$

Observe that there is a $C>0$ so that, for sufficiently small $\tau$, 
$$
\sup_{\mathbf{y}\in\mathbb{R}^{n+1},R>0} R^{-n}\mathcal{H}^n(\mathbf{f}(\Gamma)\cap B_R(\mathbf{y}))<C.
$$ 
Moreover, given $\gamma,\eta>0$, there is an $\mathcal{R}=\mathcal{R}(\Gamma,\gamma,\eta)>0$ so that if $p\in\rho\mathbf{f}(\Gamma)$ and $\rho+|\mathbf{x}(p)|>\mathcal{R}$, then $\rho\mathbf{f}(\Gamma)$ is an $\mathbf{n}_{\rho\mathbf{f}(\Gamma)}(p)$-graph of size $\gamma$ on scale $\eta$ at $p$. Thus, for a suitable choice of $\gamma$ and $\eta$ depending on $C$ and $\delta$,
\begin{equation} \label{NonCpctRegionEqn}
\sup_{\rho+|\mathbf{y}|>\mathcal{R}} F[\rho\mathbf{f}(\Gamma)+\mathbf{y}]<1+\delta\leq\lambda[\Gamma]+\delta.
\end{equation}
Next we observe that given $\mathcal{R}^\prime>\epsilon^\prime>0$ there are constants $C^\prime=C^\prime(\Gamma,\mathcal{R}^\prime,\epsilon^\prime)$ and $\rho^\prime=\rho^\prime(\Gamma,\mathcal{R}^\prime,\epsilon^\prime)>0$ so that, for $\rho<\rho^\prime$,
$$
\Vert\rho\mathbf{f}(\rho^{-1}\cdot)-\rho\mathbf{x}(\rho^{-1}\cdot)\Vert_{2; \rho\Gamma\cap (B_{2\mathcal{R}^\prime}\setminus B_{\epsilon^\prime})}<C^\prime\tau.
$$
Now choose $\epsilon^\prime$ sufficiently small and $\mathcal{R}^\prime>\mathcal{R}$ sufficiently large so 
$$
(4\pi)^{-\frac{n}{2}} C\left(\mathcal{L}^n\left(B^n_{\epsilon^\prime}\right)+ \int_{\mathbb{R}^n\setminus B^n_{\mathcal{R}^\prime}} e^{-\frac{|\mathbf{x}|^2}{4}} \, d\mathcal{L}^n\right)<\frac{\delta}{4},
$$
where $\mathcal{L}^n$ is the Lebesgue measure on $\Real^n$. As $\lim_{\rho\to 0}\rho\Gamma=\mathcal{C}(\Gamma)$ in $C^2_{loc}(\mathbb{R}^{n+1}\setminus\set{\mathbf{0}})$, it follows that for sufficiently small $\tau$
\begin{equation} \label{SmallRhoRegionEqn}
\sup_{0<\rho<\rho^\prime, \mathbf{y}\in B_{\mathcal{R}^\prime}} F[\rho\mathbf{f}(\Gamma)+\mathbf{y}]<\lambda[\mathcal{C}(\Gamma)]+\delta\leq\lambda[\Gamma]+\delta
\end{equation}
where the second inequality is implied from the lower semi-continuity of entropy. Finally, as the set 
$$
K^\prime=\set{(\rho,\mathbf{y})\colon \rho^\prime\leq\rho\leq\mathcal{R}, \mathbf{y}\in\bar{B}_{\mathcal{R}^\prime}}
$$
is compact, by shrinking $\tau$ if needed, we get
\begin{equation} \label{CpctRegionEqn}
\sup_{(\rho,\mathbf{y})\in K^\prime} F[\rho\mathbf{f}(\Gamma)+\mathbf{y}]<\sup_{(\rho,\mathbf{y})\in K^\prime} F[\rho\Gamma+\mathbf{y}]+\delta\leq \lambda[\Gamma]+\delta.
\end{equation}
Hence, combining \eqref{NonCpctRegionEqn}-\eqref{CpctRegionEqn} gives, for sufficiently small $\tau$, 
$$
\lambda[\mathbf{f}(\Gamma)]<\lambda[\Gamma]+\delta,
$$
which completes the proof.
\end{proof}

The next is an asymptotic estimate of the distance between two disjoint self-expanders that are asymptotic to the same cone.

\begin{prop} \label{InitialBarrierProp}
For $n\geq 2$, let $\Gamma$ and $\Sigma$ be self-expanders in $\mathcal{ACH}^{2}_n$ such that $\mathcal{C}(\Gamma)=\mathcal{C}(\Sigma)$ and $\Gamma\cap\Sigma=\emptyset$. There is a radius $R_1=R_1(\Gamma,\Sigma)>1$ and a constant $C_1=C_1(\Gamma,\Sigma)>0$ so that there is a smooth function $u\colon\Gamma\setminus\bar{B}_{R_1}\to\mathbb{R}^{n+1}$ that satisfies
$$
\Sigma\setminus\bar{B}_{2R_1} \subset \set{\mathbf{x}(p)+u(p)\mathbf{n}_\Gamma(p)\colon p\in\Gamma\setminus\bar{B}_{R_1}} \subset \Sigma
$$
and
$$
C_1^{-1} r^{-n-1} e^{-\frac{r^2}{4}} \leq u \leq C_1 r^{-n-1} e^{-\frac{r^2}{4}}
$$
where $\mathbf{n}_\Gamma$ is chosen to point towards $\Sigma$ and $r(p)=|\mathbf{x}(p)|$ for $p\in\Gamma$.
\end{prop}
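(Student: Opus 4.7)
The plan is to compare the two self-expanders via a normal-graph representation and then pinch the graphing function between sub- and supersolutions modeled on the approximate kernel of the Jacobi operator of $\Gamma$.

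First I would set up the graphical representation. Because $\Gamma$ and $\Sigma$ are $C^2$-asymptotic to the same cone $\mathcal{C}$, each of them is, outside a large ball, a normal graph of $\mathcal{C}$ whose height tends to $0$ together with its first two derivatives (in the weighted sense). Composing these two graph representations via the implicit function theorem gives $R_1 = R_1(\Gamma,\Sigma) > 1$ so that $\Sigma \setminus \bar B_{2R_1}$ is a smooth normal graph of $\Gamma \setminus \bar B_{R_1}$ with height $u$ satisfying $\|u\|^{(-1)}_{2;\Gamma\setminus\bar B_{R_1}} < \infty$. The hypothesis $\Gamma \cap \Sigma = \emptyset$ forces $u$ to have constant sign on each end of $\Gamma$; after choosing $\mathbf{n}_\Gamma$ to point towards $\Sigma$ on each end we obtain $u>0$. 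Writing out the self-expander equation for the graph $\Sigma_u$ over $\Gamma$ and using the fact that $\Gamma$ itself satisfies \eqref{ExpanderEqn}, a standard computation (as in Section \ref{IndexSubsec}) shows that $u$ satisfies a quasilinear elliptic equation
\[
L_\Gamma u = \Delta_\Gamma u + \tfrac{1}{2}\mathbf{x}\cdot\nabla_\Gamma u + \bigl(|A_\Gamma|^2 - \tfrac{1}{2}\bigr) u = Q(u,\nabla_\Gamma u, \nabla_\Gamma^2 u),
\]
where $Q$ vanishes to quadratic order at the origin. By \eqref{CurvExpanderEqn} and the already-known (algebraic) decay of $u$, we can absorb $Q$ into a coefficient perturbation so that $u$ solves a linear equation $\hat L u = 0$, with $\hat L - L_\Gamma$ a bounded operator whose coefficients decay like $r^{-2}$ at infinity.

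The second ingredient is the identification of the right comparison function. The radial function $\phi(r) = r^{-n-1} e^{-r^2/4}$, pulled back to $\Gamma$ through $r = |\mathbf{x}|$ restricted to $\Gamma$, is an approximate kernel of $L_\Gamma$: using the asymptotic cone structure (so that $|A_\Gamma|^2 = O(r^{-2})$ and $\Delta_\Gamma r = \tfrac{n-1}{r} + O(r^{-2})$), one computes directly that
\[
L_\Gamma \phi = \bigl(O(r^{-2})\bigr)\phi \qquad \text{on } \Gamma \setminus \bar B_{R_1},
\]
so $\phi$ solves $L_\Gamma \phi = 0$ to leading order. Consequently, for any small $\delta > 0$, the corrected functions $\phi^\pm_\epsilon = \phi\,\bigl(1 \pm \epsilon r^{-\delta}\bigr)$ are respectively super- and subsolutions of $\hat L$ on $\Gamma \setminus \bar B_R$ for $R = R(\Gamma,\delta)$ sufficiently large and any fixed $\epsilon > 0$, because the correction produces a definite-sign term of size $\delta\epsilon\, r^{-\delta - 2}\phi$ that dominates the error coming from $\hat L - L_\Gamma$ and from $L_\Gamma \phi \neq 0$.

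With these ingredients in hand, the two-sided bound follows from the maximum principle on the annular exhaustion $\{R_1 \leq |\mathbf{x}| \leq R\} \cap \Gamma$. For the upper bound, choose $C_+ > 0$ large enough that $u \leq C_+ \phi^+_\epsilon$ on $\Gamma \cap \partial B_{R_1}$; since $u$ has at least full Gaussian decay (from the curvature estimates and the rapid convergence of $\Sigma$ and $\Gamma$ to $\mathcal{C}$) while $C_+\phi^+_\epsilon$ decays polynomially slower than $\phi$, standard comparison on each annulus and then letting $R \to \infty$ gives $u \leq C_+\phi^+_\epsilon$ on all of $\Gamma \setminus \bar B_{R_1}$, and in particular $u \leq C_1 \phi$. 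The lower bound is obtained by the subsolution $c_-\phi^-_\epsilon$ in the same way, provided we can show $u \geq c_- \phi$ on $\Gamma \cap \partial B_{R_1}$; this follows because $u$ is smooth and strictly positive on the compact set $\Gamma \cap \partial B_{R_1}$ (by the strong maximum principle applied to $\hat L u = 0$, using $u \geq 0$ and $u \not\equiv 0$), so $u$ is bounded below by a positive constant there, which trivially dominates $c_- \phi$ at $r = R_1$ for $c_-$ small.

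The main obstacle is the lower bound. The upper bound is a direct supersolution comparison once the correct barrier has been identified. The lower bound hinges on ruling out faster-than-Gaussian decay, and this requires the strong maximum principle together with an adequate understanding of the linearized operator $\hat L$ to guarantee that positivity of $u$ on one slice propagates to a quantitative lower bound on $\partial B_{R_1}$. The technical care needed is to verify that the quadratic nonlinearity $Q$ can indeed be absorbed without destroying the ellipticity or the asymptotic behavior of the coefficients — a step which uses crucially the already-established algebraic decay of $u$ together with the interior estimates \eqref{CurvExpanderEqn} for $\Gamma$.
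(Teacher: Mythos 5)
Your outline — normal-graph representation, linearization to a Jacobi-type equation, and comparison against barriers modeled on $r^{-n-1}e^{-r^2/4}$ — does match the spirit of the paper's argument for the lower bound, and your identification of the comparison function and the need for a maximum principle on the exterior domain is on target. However, the proposal has genuine gaps, the most serious of which is in the upper bound.

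\textbf{The upper bound is asserted, not proved.} You write that ``$u$ has at least full Gaussian decay (from the curvature estimates and the rapid convergence of $\Sigma$ and $\Gamma$ to $\mathcal{C}$).'' But the convergence of a $C^2$-asymptotically conical self-expander to its cone is only algebraic, $O(r^{-1})$ (this is the content of \eqref{CurvExpanderEqn}), so \emph{a priori} one only knows $u=O(r^{-1})$. Getting from $O(r^{-1})$ to $O(r^{-n-1}e^{-r^2/4})$ is the hard content of the upper bound; the paper does not re-derive it but cites \cite[Proposition 2.1]{BWExpanderRelEnt}. Without an independent proof of Gaussian decay, the comparison on the annulus $\{R_1\le r\le R\}$ cannot be closed at the outer boundary $\partial B_R$ as $R\to\infty$ (the barrier there is $O(e^{-R^2/4})$ while you only control $u$ by $O(R^{-1})$), so the argument is circular. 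Relatedly, your claim that $\hat L-L_\Gamma$ has coefficients decaying like $r^{-2}$ is not justified by the a priori decay of $u$: the coefficients $\mathbf{a},b$ are bounded by $|u|+|\nabla_\Gamma u|+|\nabla_\Gamma^2 u|+|\mathbf{x}\cdot\nabla_\Gamma u|$, and the $\mathbf{x}\cdot\nabla_\Gamma u$ term has a growing factor, so from $u=O(r^{-1})$ one only gets $\mathbf{a},b=O(r^{-1})$, which is too slow even for your own barrier argument. The paper obtains the much stronger $e^{-r^2/8}$ decay for $\mathbf{a},b$ precisely by first invoking the cited sharp estimates on $u$.

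\textbf{The sub/supersolution labels and the size of the correction are wrong.} A direct computation on the flat model (which captures the leading behavior since $|A_\Gamma|^2=O(r^{-2})$) gives
\[
L_\Gamma\bigl(r^{-\alpha}e^{-r^2/4}\bigr)=\Bigl[\tfrac{\alpha-n-1}{2}+\alpha(\alpha-n+2)r^{-2}\Bigr]\,r^{-\alpha}e^{-r^2/4}.
\]
Taking $\alpha=n+1$ kills the leading coefficient and leaves $L_\Gamma\phi=3(n+1)r^{-2}\phi+O(r^{-3})\phi>0$, so $\phi$ is a \emph{subsolution}, not approximately a kernel element in the sign-relevant sense. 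Taking $\alpha=n+1+\delta$ shows $L_\Gamma(\phi\, r^{-\delta})=\bigl(\tfrac{\delta}{2}r^{-\delta}+O(r^{-\delta-2})\bigr)\phi$, which is of size $r^{-\delta}\phi$, not $r^{-\delta-2}\phi$ as you claim; note that with the size you wrote, the correction would actually be dominated by the $O(r^{-2})\phi$ error from $L_\Gamma\phi\neq 0$, so your own argument would not close. Since both $L_\Gamma\phi$ and $L_\Gamma(\phi\, r^{-\delta})$ are \emph{positive}, $\phi^+_\epsilon=\phi(1+\epsilon r^{-\delta})$ is a subsolution (not a supersolution) and, for $\delta<2$, $\phi^-_\epsilon$ is a supersolution (not a subsolution). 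The paper's barrier $v=(r^{-n-1}+r^{-n-2})e^{-r^2/4}=\phi(1+r^{-1})$ is exactly a subsolution of this type and is used for the lower bound; for the upper bound one would have to pick a supersolution, e.g.\ with $\alpha<n+1$, but that only yields $u\le C r^{-n-1+\delta}e^{-r^2/4}$, not the sharp rate, which again is why the paper delegates the upper bound to the external reference. Also, $\phi^+_\epsilon\sim\phi$, so it does not ``decay polynomially slower than $\phi$.''

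In short: the route is close to the paper's for the lower bound once the signs are fixed, but the upper bound (Gaussian decay of $u$ and the sharp prefactor) is a nontrivial external input that your proposal assumes rather than proves, and the sub/supersolution computations need to be corrected before the comparison argument is valid.
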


\begin{proof}
Appealing to \cite[Proposition 2.1]{BWExpanderRelEnt} one has all claims but the lower bound of $u$. To see this lower bound, a standard computation (see, e.g., \cite[Lemma A.2]{BWExpanderRelEnt}) gives that, up to increasing $R_1$, 
$$
L_\Gamma u=\Delta_\Gamma u+\frac{1}{2}\mathbf{x}\cdot\nabla_\Gamma u+\left(|A_\Gamma|^2-\frac{1}{2}\right) u=\mathbf{a}\cdot\nabla_\Gamma u+bu
$$
where $\mathbf{a}$ and $b$ satisfy
$$
|\mathbf{a}|+|b| \leq C\left(|u|+|\nabla_\Gamma u|+|\nabla^2_\Gamma u|+|\mathbf{x}\cdot\nabla_\Gamma u|\right).
$$
Invoking the gradient and Hessian estimate for $u$ given by \cite[Proposition 2.1]{BWExpanderRelEnt}, one has $|\mathbf{a}|+|b|$ decays with a rate at least $e^{-\frac{r^2}{8}}$. Thus, set $v=(r^{-n-1}+r^{-n-2}) e^{-\frac{r^2}{4}}$ and, up to further increasing $R_1$, one readily computes (see \cite[Lemma A.1]{BWExpanderRelEnt})
$$
L_\Gamma v \geq \mathbf{a}\cdot\nabla_\Gamma v+bv \mbox{ on $\Gamma\setminus\bar{B}_{R_1}$}.
$$
As $\Gamma\cap\Sigma=\emptyset$, there is a constant $C_1>0$ so that
$$
\inf_{\Gamma\cap\partial B_{R_1}} u \geq C_1^{-1} (R_1^{-n-1}+R_1^{-n-2}) e^{-\frac{R_1^2}{4}}.
$$
As $\Gamma$ is $C^2$-asymptotically conical, we can enlarge $R_1$ so 
$$
\sup_{\Gamma\setminus\bar{B}_{R_1}} \left(|A_\Gamma|^2-\frac{1}{2}-b\right)<-\frac{1}{4}.
$$
Hence the maximum principle implies
$$
u \geq C_1^{-1} v >C_1^{-1} r^{-n-1} e^{-\frac{r^2}{4}}
$$
proving the claim.
\end{proof}

\begin{proof}[Proof of Theorem \ref{UnstableExpanderThm}]
Partition $\Gamma$ into its connected components $\Gamma^1,\ldots,\Gamma^M$. For $\Gamma$ to be unstable, at least one $\Gamma^j$ must also be unstable. Invoking \cite[Proposition 3.2]{BWMinMax}, for each $\Gamma^j$ there is a number $\mu_j$ and a function $f_j>0$ on $\Gamma^j$ so that 
$$
(L_{\Gamma^j}+\mu_j) f_j=0
$$
and $f_j$ satisfies the pointwise estimates
$$
\frac{1}{C_0^\prime} (1+|\mathbf{x}|^2)^{-\frac{1}{2}(n+1-2\mu_j)} e^{-\frac{|\mathbf{x}|^2}{4}}\leq f_j \leq C_0^\prime  (1+|\mathbf{x}|^2)^{-\frac{1}{2}(n+1-2\mu_j)} e^{-\frac{|\mathbf{x}|^2}{4}}
$$
and, for every $i\geq 1$,
$$
\Vert e^{\frac{|\mathbf{x}|^2}{8}} \nabla^i_{\Gamma} f_j \Vert_0 \leq C_i^\prime
$$
where $C_0^\prime=C_0^\prime(\Gamma^j)>0$ and $C^\prime_i=C_i^\prime(\Gamma^j)$. Moreover, if $\Gamma^j$ is unstable, then $\mu_j<0$. 

Define $f\colon\Gamma\to\mathbb{R}$ by 
$$
f(p)=\left\{\begin{array}{cc} f_j(p) & \mbox{if $p\in\Gamma^j$ for some unstable $\Gamma^j$}; \\ 0 & \mbox{otherwise}. \end{array}\right.
$$
Given a number $\epsilon$, let 
$$
\Gamma^\epsilon=\set{\mathbf{f}^\epsilon(p)=\mathbf{x}(p)+\epsilon f(p)\mathbf{n}_{\Gamma}(p)\colon p\in\Gamma}
$$
where $\mathbf{n}_\Gamma$ is chosen to point out of $\Omega_-(\Gamma)$. The estimates of $f_j$ ensure that there is a sufficiently small $\bar{\epsilon}=\bar{\epsilon}(\Gamma)>0$ so that, for all $|\epsilon|<\bar{\epsilon}$, $\Gamma^\epsilon\in\mathcal{ACH}^{k,\alpha}_n$ with $\mathcal{C}(\Gamma^\epsilon)=\mathcal{C}(\Gamma)$.

We wish to apply Proposition \ref{FlowProp} to $\Gamma^{\epsilon,j}=\mathbf{f}^\epsilon(\Gamma^j)$ for those unstable $\Gamma^j$. Suppose $-\bar{\epsilon}<\epsilon<0$. By \eqref{CurvExpanderEqn} and the flow equation, the distance between $\Gamma^j$ and $\mathcal{C}(\Gamma^j)$ decays linearly. This fact and the $C^0$ estimate of $f_j$ ensure that Hypothesis \eqref{AsympConeHypo} of Proposition \ref{FlowProp} is satisfied. Next, choose the unit normal $\mathbf{n}_{\Gamma^\epsilon}$ to point out of $\Omega_-(\Gamma^\epsilon)$ and this induces the choice of normal on each $\Gamma^{\epsilon,j}$. If $\Gamma^j$ is unstable, then one appeals to the computations in \cite[Lemma A.2]{BWExpanderRelEnt} and the estimates of $f_j$ to see that, for all $\epsilon<0$ sufficiently close to $0$,  
$$
E^{O,1}_{\Gamma^{\epsilon,j}} \geq 2(C_0^\prime)^{-1} \epsilon \mu_j \psi_{\beta_j}(1+|\mathbf{x}|^2)
$$
for $\beta_j=\frac{1}{2}(n+1-2\mu_j)>0$. That is, Hypothesis \eqref{RescaleMCHypo} of Proposition \ref{FlowProp} is satisfied. Finally, by Lemma \ref{EntropyContLem} and shrinking $\bar{\epsilon}$ (if needed), one has $\lambda[\Gamma^{\epsilon,j}]\leq \lambda[\Gamma^\epsilon]<\Lambda_n^*$. As such, Hypothesis \eqref{EntropyHypo} of Proposition \ref{FlowProp} is satisfied. Thus, if $\Gamma^j$ is unstable, then one finds an open set $\Omega^{\epsilon,j}$ so that $\partial\Omega^{\epsilon,j}=\Gamma^{\epsilon,j}$ and $\mathbf{n}_{\Gamma^{\epsilon,j}}$ points out of $\Omega^{\epsilon,j}$, and applies Proposition \ref{FlowProp} to $(\Gamma^{\epsilon,j},\Omega^{\epsilon,j})$. If $\Gamma^j$ is stable, then $\Gamma^{\epsilon,j}=\Gamma^j$ and evolves by mean curvature in the self-expanding way. Hence, by our discussions and the maximum principle, there is a unique MCF, $\set{\Gamma^\epsilon_t}_{t\geq 1}$ with $\Gamma^\epsilon_1=\Gamma^\epsilon$ so that each $\Gamma^\epsilon_t\in\mathcal{ACH}^{k,\alpha}_n$ with $\mathcal{C}(\Gamma^\epsilon_t)=\mathcal{C}(\Gamma)$, and the family $\set{t^{-1/2}\Gamma^\epsilon_t}_{t\geq 1}$ evolves, in a strictly monotone manner, into $\Omega_-(\Gamma^\epsilon)$ so 
$$
\lim_{t\to\infty} t^{-\frac{1}{2}}\Gamma^\epsilon_t=\Sigma^\epsilon \mbox{ in $C^\infty_{loc}(\mathbb{R}^{n+1})$}
$$
where $\Sigma^\epsilon\in\mathcal{ACH}^{k,\alpha}_n$ is a stable self-expander with $\mathcal{C}(\Sigma^\epsilon)=\mathcal{C}(\Gamma)$. It follows that $\Sigma^\epsilon\preceq\Gamma^\epsilon\preceq\Gamma$ and $\Sigma^\epsilon$ is $C^{k,\alpha}$ a.c.-isotopic with fixed cone to $\Gamma$. Similar arguments apply to the case $0<\epsilon<\bar{\epsilon}$ and produce a $\Sigma^\epsilon$ with the same properties as above but $\Gamma\preceq\Sigma^\epsilon$.

It remains only to prove the last claim (``Moreover, if..."). Without loss of generality, it suffices to consider the case that $\Gamma^\prime\preceq\Gamma$ but $\Gamma^\prime\neq\Gamma$. Let $\Gamma^\prime_t=\sqrt{t} \, \Gamma^\prime$ for $t>0$. We will show, by choosing $\bar{\epsilon}$ sufficiently small, which may depend on $\Gamma^\prime$ as well, that if $-\bar{\epsilon}<\epsilon<0$, then $\Gamma^\prime_t\preceq\Gamma^\epsilon_t$ for all $t\geq 1$. This would imply $\Gamma^\prime\preceq\Sigma^\epsilon$, proving the claim. Let $\hat{\Gamma}=\Gamma\cap\Gamma^\prime$ and $\hat{\Gamma}_t=\sqrt{t} \, \hat{\Gamma}$ for $t>0$. By the strong maximum principle $\hat{\Gamma}$ is the union of the common components of $\Gamma$ and $\Gamma^\prime$. As $\Gamma^\prime$ is stable so is $\hat{\Gamma}$. Thus, by our construction, $\mathbf{f}^\epsilon(\hat{\Gamma})=\hat{\Gamma}$ and so evolves by mean curvature in the self-expanding manner as well. Thus it is enough to show that, for any $\epsilon\in (-\bar{\epsilon},0)$ fixed, the set
$$
S=\set{s\geq 1\colon \mathrm{cl}(\Omega_-(\Gamma_t^\prime))\setminus\hat{\Gamma}_t\subset \Omega_{-}(\Gamma^\epsilon_t) \mbox{ for all $t\in [1,s]$}}
$$
is equal to $[1,\infty)$.

To see this, first observe that the $C^0$ estimate of $f_j$ and Proposition \ref{InitialBarrierProp} ensure that, for any $\epsilon<0$ very close to $0$, $\mathrm{cl}(\Omega_-(\Gamma^\prime))\setminus\hat{\Gamma}\subset\Omega_-(\Gamma^\epsilon)$ and so $S \neq \emptyset$. Next, if $s_i\in S$ and $s_i\to s$, then $\Omega_-(\Gamma_s^\prime)\subset \Omega_{-}(\Gamma^\epsilon_s)$ and $\mathrm{cl}(\Omega_-(\Gamma_{t}^\prime))\setminus\hat{\Gamma}_t\subset \Omega_{-}(\Gamma^\epsilon_{t})$ for all $t\in [1,s)$. If $s\not\in S$, then  $(\Gamma_{s}^\prime\setminus\hat{\Gamma}_s)\cap (\Gamma^\epsilon_{s}\setminus\hat{\Gamma}_s)\neq \emptyset$ which violates the strong maximum principle. This shows $S$ is closed.

Finally, fix any $s\in S$. As $\Gamma^\epsilon\in\mathcal{ACH}^{k,\alpha}_n$ and the distance to its asymptotic cone decays linearly, one uses Lemma \ref{AsympFlowLem} to find a radius $R>1$ and a family of functions $u(\cdot,t)$ on $\Gamma_t^\prime\setminus\bar{B}_R\to\mathbb{R}$ with uniform $C^2$ bound and so that, for any $t\in [1,s+1]$, 
$$
\Gamma_t^\epsilon\setminus\bar{B}_{2R} \subset\set{\mathbf{x}(p)+u(p,t)\mathbf{n}_{\Gamma_t^\prime}(p)\colon p\in\Gamma_t^\prime\setminus\bar{B}_R} \subset\Gamma_t^\epsilon.
$$
A straightforward, but tedious, computation gives that 
$$
\frac{du}{dt}-\Delta_{\Gamma^\prime_t}u=\mathbf{a}^\prime\cdot\nabla_{\Gamma^\prime_t} u+b^\prime u
$$
where $\mathbf{a}^\prime$ and $b^\prime$ are smooth bounded. As $s\in S$,
$$
\inf_{p\in \Gamma_s^\prime\setminus\bar{B}_{2R}} u(p,s)\geq 0.
$$ 
By the continuity of the flows and the definition of $S$, there is a $\delta>0$ so that 
$$
\inf\set{u(p,t)\colon p\in (\Gamma^\prime_t\setminus\hat{\Gamma}_t)\cap \partial B_{2R}, t\in [s,s+\delta]}>0.
$$
Thus, by a non-compact form of the strong maximum principle, $u(p,t)>0$ for $p\in \Gamma^\prime_t\setminus (\hat{\Gamma}_t\cup B_{2R})$ and $t\in (s,s+\delta]$. This, together with the strong maximum principle on compact regions, implies that $\mathrm{cl}(\Omega_-(\Gamma_t^\prime))\setminus\hat{\Gamma}_t\subset\Omega_-(\Gamma_t^\epsilon)$ for all $t\in [s,s+\delta]$. That is, $S$ is open. Hence, as $[1,\infty)$ is connected, one has $S=[1,\infty)$. This completes the proof.
\end{proof}

\section{Perturbation properties of weakly stable self-expanders}\label{PerturbStableExpanderSec}
We need several perturbation results for weakly stable self-expanders. Specifically, we will need to show that it is possible to connect, via an a.c.-isotopy that does not move the asymptotic cones much along the path, any weakly stable self-expander to a self-expander whose asymptotic cone is a generic perturbation of the initial cone. These results rely on the analysis carried out in \cite{BWBanach}.

We introduce the following notation: Let $n,k\geq 2$ and $\alpha\in (0,1)$. Given a $C^{k,\alpha}$-regular cone $\cC\subset\mathbb{R}^{n+1}$ and a map $\varphi\in C^{k,\alpha}(\mathcal{L}(\cC);\Real^{n+1})$, let 
$$
\mathcal{C}[\varphi]=\set{\rho\varphi(p)\colon p\in \mathcal{L}(\cC), \rho>0}.
$$
Clearly, $\mathcal{C}[\varphi]$ is a set-theoretic cone. As $\mathcal{L}(\cC)$ is compact, there is a neighborhood $\mathcal{V}_{\mathrm{emb}}(\cC)$ of $\mathbf{x}|_{\mathcal{L}(\cC)}$ in $C^{k,\alpha}(\mathcal{L}(\cC); \Real^{n+1})$ so that, for any $\varphi\in\mathcal{V}_{\mathrm{emb}}(\cC)$, $\mathcal{C}[\varphi]$ is a $C^{k,\alpha}$- regular cone and $\mathscr{E}^{\mathrm{H}}_1[\varphi]\colon\cC\to \cC[\varphi]$ is an embedding.

The compactness of $\mathcal{E}_S(\cC)$ together with results of \cite{BWBanach} gives the local finiteness for diffeomorphism types. First we need the following elementary fact:

\begin{lem}\label{ParamLem}
For $n,k\geq 2$ and $\alpha\in (0,1)$, let $\Sigma_i\in\mathcal{ACH}^{k,\alpha}_n$ be self-expanders and suppose $\Sigma_i\to\Sigma$ in $C^\infty_{loc}(\Real^{n+1})$. Let $\sigma$ be a $C^{k,\alpha}$-hypersurface in $\mathbb{S}^n$ and let $\varphi_i\in C^{k,\alpha}(\sigma;\mathbb{R}^{n+1})$ such that $\cC[\varphi_i]=\cC(\Sigma_i)$ and $\varphi_i\to\mathbf{x}|_\sigma$ in $C^{k,\alpha}(\mathbb{S}^n; \mathbb{R}^{n+1})$. Then one has $\Sigma\in \mathcal{E}(\mathcal{C}[\sigma])$ and, for sufficiently large $i$, there are $\mathbf{f}_i\in \mathcal{ACH}_n^{k,\alpha}(\Sigma)$ with $\mathbf{f}_i(\Sigma)=\Sigma_i$ and $\mathrm{tr}_\infty^1[\mathbf{f}_i]=\varphi_i$.
\end{lem}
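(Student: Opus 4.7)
The proof splits naturally into two parts: first, establishing that the limit $\Sigma$ lies in $\mathcal{E}(\mathcal{C}[\sigma])$, and second, constructing parameterizations $\mathbf{f}_i$ with the prescribed trace at infinity.

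\emph{Step 1 ($\Sigma\in\mathcal{E}(\mathcal{C}[\sigma])$).} As a $C^\infty_{loc}$ limit of smooth self-expanders, $\Sigma$ satisfies \eqref{ExpanderEqn}. It has no closed connected components: on any self-expander one computes $\Delta_\Sigma|\mathbf{x}|^2 = 2\mathbf{x}\cdot\mathbf{H}_\Sigma + 2|\nabla_\Sigma\mathbf{x}|^2 = |\mathbf{x}^\perp|^2 + 2n > 0$, so $|\mathbf{x}|^2$ cannot attain an interior maximum on a closed piece. To identify the tangent cone of $\Sigma$ at infinity, use the uniform interior curvature bound \eqref{CurvExpanderEqn} applied to each $\Sigma_i$ together with the $C^{k,\alpha}$-convergence $\mathcal{C}[\varphi_i]\to\mathcal{C}[\sigma]$ to show that any blow-down $\rho_j\Sigma$ ($\rho_j\to 0^+$) accumulates to $\mathcal{C}[\sigma]$. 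Invoking \cite[Proposition 3.3]{BWProperness} then upgrades this to $C^{k,\alpha}_*$-asymptotic regularity, so $\Sigma\in\mathcal{ACH}^{k,\alpha}_n$ with $\mathcal{C}(\Sigma)=\mathcal{C}[\sigma]$.

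\emph{Step 2 (a first parameterization $\hat{\mathbf{f}}_i$).} Fix a homogeneous transverse section $\mathbf{V}$ on $\mathcal{C}[\sigma]$. Using \cite[Proposition 3.3]{BWProperness} applied to each $\Sigma_i$ together with the $C^{k,\alpha}$-convergence $\mathcal{C}[\varphi_i]\to\mathcal{C}[\sigma]$, one obtains a radius $R>1$ so that $\Sigma$ and, for $i$ large, each $\Sigma_i$ are realized as $\mathbf{V}$-graphs over $\mathcal{C}[\sigma]$ outside $B_R$ with graph functions whose $C^{k,\alpha}_1$ norms tend to zero. Combined with the $C^\infty_{loc}$ convergence on $\bar{B}_{2R}$, a cut-off interpolation between the asymptotic graph representation and a compact normal-graph representation yields, for large $i$, a $C^{k,\alpha}$ diffeomorphism $\hat{\mathbf{f}}_i\in\mathcal{ACH}^{k,\alpha}_n(\Sigma)$ with $\hat{\mathbf{f}}_i(\Sigma)=\Sigma_i$ and $\|\hat{\mathbf{f}}_i-\mathbf{x}|_\Sigma\|^{(1)}_{k,\alpha}\to 0$. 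Its trace $\hat\varphi_i:=\mathrm{tr}^1_\infty[\hat{\mathbf{f}}_i]$ is then a $C^{k,\alpha}$ embedding of $\sigma$ onto $\mathcal{L}(\Sigma_i)$ with $\hat\varphi_i\to\mathbf{x}|_\sigma$.

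\emph{Step 3 (trace correction).} Since $\hat\varphi_i$ and $\varphi_i$ both embed $\sigma$ onto $\mathcal{L}(\Sigma_i)$ and both converge to $\mathbf{x}|_\sigma$, the composition $\psi_i:=\hat\varphi_i^{-1}\circ\varphi_i$ is a $C^{k,\alpha}$ diffeomorphism of $\sigma$ converging to $\mathrm{id}_\sigma$. I extend $\psi_i$ to a $C^{k,\alpha}$ diffeomorphism $\Psi_i\colon\Sigma\to\Sigma$ equal to $\mathrm{id}_\Sigma$ on $\Sigma\cap B_R$ and equal to $\theta_{\mathbf{V};\Sigma}\circ\mathscr{E}^{\mathrm{H}}_1[\psi_i]\circ\pi_{\mathbf{V}}$ on $\Sigma\setminus B_{2R}$ (using the $\mathbf{V}$-graph parameterization of $\Sigma$), with smooth cut-off interpolation in between. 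For large $i$, $\Psi_i$ is $C^{k,\alpha}$-close to $\mathrm{id}_\Sigma$, hence a diffeomorphism. Setting $\mathbf{f}_i=\hat{\mathbf{f}}_i\circ\Psi_i$, one has $\mathbf{f}_i(\Sigma)=\Sigma_i$. Computing the trace directly from the definition,
$$
\mathrm{tr}^1_\infty[\mathbf{f}_i](q)=\lim_{\rho\to 0^+}\rho\,\hat{\mathbf{f}}_i(\Psi_i(\theta_{\mathbf{V}}(\rho^{-1}q)))=\lim_{\rho\to 0^+}\rho\,\hat{\mathbf{f}}_i(\theta_{\mathbf{V}}(\rho^{-1}\psi_i(q)))=\hat\varphi_i(\psi_i(q))=\varphi_i(q),
$$
as required.

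\emph{Main obstacle.} The principal technical point is Step 2: the $C^\infty_{loc}$-convergence $\Sigma_i\to\Sigma$ by itself does not control behavior at infinity, so one must extract a uniform $C^{k,\alpha}_1$ graphical decay for the family $\set{\Sigma_i}$ from \cite[Proposition 3.3]{BWProperness} together with the convergence of the cones. Once this uniformity is established, Steps~2 and~3 are standard differential-topological constructions, and the only remaining check is the asymptotic computation of $\mathrm{tr}^1_\infty[\hat{\mathbf{f}}_i\circ\Psi_i]$, which is designed so that $\hat\varphi_i\circ\psi_i=\varphi_i$ by our choice of $\psi_i$.
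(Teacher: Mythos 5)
Your overall architecture differs from the paper's. The paper first solves a linear PDE (via \cite[Corollary 5.8]{BWBanach}) to produce a map $\mathbf{g}_i = \mathbf{x}|_\Sigma+\mathbf{h}_i$ whose trace is \emph{exactly} $\varphi_i$ by construction, sets $\Upsilon_i=\mathbf{g}_i(\Sigma)$, and then writes $\Sigma_i$ as a normal graph over $\Upsilon_i$; since the graph function $u_i$ lies in $C^{k}_{1,0}(\Sigma)$ (zero trace), the final map $\mathbf{f}_i=\mathbf{g}_i+u_i\mathbf{v}$ automatically has $\mathrm{tr}^1_\infty[\mathbf{f}_i]=\varphi_i$. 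You instead construct a parameterization $\hat{\mathbf{f}}_i$ of $\Sigma_i$ first, with uncontrolled trace, and then post-compose with a diffeomorphism of $\Sigma$ to fix the trace. That route can be made to work, but as written Step 3 has a gap. Also, your Step 1 is slightly off route: the paper gets $\Sigma\in\mathcal{E}(\mathcal{C}[\sigma])$ directly from $\mathcal{L}(\Sigma_i)\to\sigma$ via \cite[Corollary 3.4]{BWProperness} rather than re-running a blow-down argument, but that difference is harmless.

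The gap is in the definition of $\psi_i$. You assert that $\hat\varphi_i=\mathrm{tr}^1_\infty[\hat{\mathbf{f}}_i]$ and $\varphi_i$ "both embed $\sigma$ onto $\mathcal{L}(\Sigma_i)$", and define $\psi_i=\hat\varphi_i^{-1}\circ\varphi_i$ as a self-diffeomorphism of $\sigma$. This is not justified. The hypothesis on $\varphi_i$ is only $\mathcal{C}[\varphi_i]=\mathcal{C}(\Sigma_i)$, so $\varphi_i(\sigma)$ is merely a $C^{k,\alpha}$ cross-section of the cone $\mathcal{C}(\Sigma_i)$, not necessarily $\mathcal{C}(\Sigma_i)\cap\mathbb{S}^n$; the same is true of $\hat\varphi_i(\sigma)$. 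In general $\hat\varphi_i(\sigma)\neq\varphi_i(\sigma)$ as sets (they can lie at different radii along each ray), so $\hat\varphi_i^{-1}\circ\varphi_i$ is not well-defined, and even after projecting radially to $\mathbb{S}^n$ the resulting $\psi_i\colon\sigma\to\sigma$ satisfies only $P\circ\hat\varphi_i\circ\psi_i=P\circ\varphi_i$, not $\hat\varphi_i\circ\psi_i=\varphi_i$, so your trace computation yields the wrong thing. The correct object is $\psi_i=(\mathscr{E}^{\mathrm{H}}_1[\hat\varphi_i])^{-1}\circ\varphi_i\colon\sigma\to\mathcal{C}[\sigma]$, which does take values in $\mathcal{C}[\sigma]$ but generally not in $\sigma\subset\mathbb{S}^n$; with this choice one has $\mathrm{tr}^1_\infty[\hat{\mathbf{f}}_i\circ\Psi_i]=\mathscr{E}^{\mathrm{H}}_1[\hat\varphi_i]\circ\psi_i=\varphi_i$, and since $\mathscr{E}^{\mathrm{H}}_1[\psi_i]$ still maps $\mathcal{C}[\sigma]$ into $\mathcal{C}[\sigma]$ your cut-off construction of $\Psi_i$ goes through. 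So your argument is repairable, but the formula for $\psi_i$ and the claim that it is a diffeomorphism of $\sigma$ are both wrong as stated.
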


\begin{proof}
First observe that as each $\Sigma_i$ satisfies \eqref{ExpanderEqn}, the nature of the convergence ensures that $\Sigma$ does as well. By our hypotheses on $\varphi_i$, one has $\mathcal{L}(\Sigma_i)\to\sigma$ in $C^{k,\alpha}(\mathbb{S}^n)$ and so, by \cite[Corollary 3.4]{BWProperness}, $\Sigma\in\mathcal{ACH}^{k,\alpha}_n$ with $\mathcal{C}(\Sigma)=\mathcal{C}[\sigma]$. That is $\Sigma\in \mathcal{E}(\mathcal{C}[\sigma])$. 

Let $\mathbf{h}_i\in C^{k,\alpha}_1\cap C_{1,\mathrm{H}}^{k}(\Sigma;\mathbb{R}^{n+1})$ be chosen to satisfy
$$
\Delta_\Sigma\mathbf{h}_i+\frac{1}{2}\mathbf{x}\cdot\nabla_\Sigma\mathbf{h}_i-\frac{1}{2}\mathbf{h}_i=\mathbf{0} \mbox{ and } \mathrm{tr}_\infty^1[\mathbf{h}_i]={\varphi}_i-\mathbf{x}|_{\sigma}.
$$
By \cite[Corollary 5.8]{BWBanach}, there is a unique such $\mathbf{h}_i$ which satisfies the estimate
$$
\Vert\mathbf{h}_i\Vert_{k,\alpha}^{(1)} \leq C\Vert\varphi_i-\mathbf{x}|_{\sigma}\Vert_{k,\alpha}
$$
where $C$ depends only on $\Sigma$. We then let 
$$
\mathbf{g}_i=\mathbf{x}|_\Sigma+\mathbf{h}_i \mbox{ and } \Upsilon_i=\mathbf{g}_i(\Sigma).
$$
It is clear that, for sufficiently large $i$, $\mathbf{g}_i\in\mathcal{ACH}^{k,\alpha}_n(\Sigma)$ and $\mathrm{tr}_\infty^1[\mathbf{g}_i]={\varphi}_i$. Thus, by  \cite[Proposition 3.3]{BWBanach}, $\Upsilon_i\in\mathcal{ACH}^{k,\alpha}_n$ and $\mathcal{C}(\Upsilon_i)=\mathcal{C}(\Sigma_i)$.

Pick a transverse section $\mathbf{v}$ on $\Sigma$ so that $\mathbf{v}\in C^{k,\alpha}_0\cap C^k_{0,\mathrm{H}}(\Sigma;\mathbb{S}^{n})$. Let $\mathbf{v}_i=\mathbf{v}\circ\mathbf{g}_i^{-1}$ and let $\pi_{\mathbf{v}_i}$ be the projection along $\mathbf{v}_i$ onto $\Upsilon_i$. By \cite[Proposition 3.3]{BWProperness}, for sufficiently large $i$, $\pi_{\mathbf{v}_i}|_{\Sigma_i}\colon\Sigma_i\to\Upsilon_i$ is an element of $\mathcal{ACH}^{k,\alpha}_n(\Sigma_i)$. Thus, there is a unique function $u_i\in C^{k,\alpha}_1\cap C^{k}_{1,0}(\Sigma)$ so that $\Sigma_i$ can be parametrized by the map 
$$
\mathbf{f}_i=(\pi_{\mathbf{v}_i}|_{\Sigma_i})^{-1}\circ\mathbf{g}_i=\mathbf{g}_i+u_i\mathbf{v}
$$
which, by \cite[Proposition 3.3]{BWBanach}, is an element of $\mathcal{ACH}^{k,\alpha}_n(\Sigma)$ and $\mathrm{tr}_\infty^1[\mathbf{f}_i]=\mathrm{tr}_\infty^1[\mathbf{g}_i]=\varphi_i$. This completes the proof.
\end{proof}

\begin{prop}\label{FinitenessProp}
For $k\geq 2$ and $\alpha\in (0,1)$, let $\cC$ be a $C^{k,\alpha}$-regular cone in $\Real^{n+1}$ and assume either $2\leq n \leq 6$ or $\lambda[\cC]<{\Lambda_n}$. There is an $\epsilon_1=\epsilon_1(\cC)>0$ and a finite set $\set{\Gamma_1, \ldots, \Gamma_J}\subseteq\mathcal{E}_{S}(\cC)$ so that the following is true: For any $\varphi\in C^{k,\alpha}(\mathcal{L}(\cC); \Real^{n+1})$ with $\Vert \varphi-\mathbf{x}|_{\mathcal{L}(\cC)}\Vert_{k,\alpha}<\epsilon_1$ and any $\Gamma\in\mathcal{E}_S(\mathcal{\cC}[\varphi])$, there is an integer $i \in [1, J]$ and an element $\mathbf{f}\in \mathcal{ACH}_{n}^{k,\alpha}(\Gamma_i)$ so that $\Gamma=\mathbf{f}(\Gamma_i)$ and $\mathrm{tr}^1_\infty[\mathbf{f}]=\varphi$. 
\end{prop}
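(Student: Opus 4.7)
The argument combines the $C^\infty_{loc}$-compactness of $\mathcal{E}_S(\cC)$ from Proposition \ref{CpctProp} with the parameterization result Lemma \ref{ParamLem}, via an open cover of $\mathcal{E}_S(\cC)$ and two successive contradiction arguments.

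I would first establish the following local statement: for every $\Gamma^*\in \mathcal{E}_S(\cC)$, there exist an open $C^\infty_{loc}$-neighborhood $V_{\Gamma^*}$ of $\Gamma^*$ in the space of hypersurfaces of $\Real^{n+1}$ and a constant $\epsilon^*=\epsilon^*(\Gamma^*)>0$, such that for any $\varphi$ with $\Vert \varphi-\mathbf{x}|_{\mathcal{L}(\cC)}\Vert_{k,\alpha}<\epsilon^*$ and any $\Gamma\in \mathcal{E}_S(\cC[\varphi])\cap V_{\Gamma^*}$, there is $\mathbf{f}\in \mathcal{ACH}^{k,\alpha}_n(\Gamma^*)$ with $\mathbf{f}(\Gamma^*)=\Gamma$ and $\mathrm{tr}_\infty^1[\mathbf{f}]=\varphi$. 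If this were to fail for some $\Gamma^*$, one could extract a sequence $(\varphi_j,\Gamma_j)$ with $\varphi_j\to \mathbf{x}|_{\mathcal{L}(\cC)}$ in $C^{k,\alpha}$ and $\Gamma_j\to \Gamma^*$ in $C^\infty_{loc}$, yet admitting no such parameterization; applying Lemma \ref{ParamLem} with $\sigma=\mathcal{L}(\cC)$, $\Sigma_j=\Gamma_j$ and $\Sigma=\Gamma^*$ produces such parameterizations for all large $j$, a contradiction.

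Next, Proposition \ref{CpctProp} ensures that $\mathcal{E}_S(\cC)$ is sequentially (hence topologically) compact in $C^\infty_{loc}(\Real^{n+1})$. Since $\{V_\Gamma\}_{\Gamma\in \mathcal{E}_S(\cC)}$ forms an open cover, one extracts a finite subcover $V_{\Gamma_1},\ldots,V_{\Gamma_J}$ with corresponding constants $\epsilon_1^*,\ldots,\epsilon_J^*$. It then remains to choose $\epsilon_1\in (0,\min_i \epsilon_i^*]$ small enough that any $\Gamma\in \mathcal{E}_S(\cC[\varphi])$ with $\Vert\varphi-\mathbf{x}|_{\mathcal{L}(\cC)}\Vert_{k,\alpha}<\epsilon_1$ must in fact lie in some $V_{\Gamma_i}$. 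If no such $\epsilon_1$ existed, I would build a sequence $(\varphi_N,\Gamma_N)$ with $\varphi_N\to \mathbf{x}|_{\mathcal{L}(\cC)}$, $\Gamma_N\in \mathcal{E}_S(\cC[\varphi_N])$, and $\Gamma_N\notin \bigcup_i V_{\Gamma_i}$; Proposition \ref{CpctProp} supplies a subsequential limit $\Gamma_\infty\in \mathcal{E}_S(\cC)\subseteq \bigcup_i V_{\Gamma_i}$, and the openness of each $V_{\Gamma_i}$ in $C^\infty_{loc}$ then forces $\Gamma_N\in V_{\Gamma_i}$ for all large $N$, a contradiction. Combining the two steps yields the desired $\mathbf{f}$.

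The main obstacle, already handled by the two previously established results, is converting the non-quantitative compactness of $\mathcal{E}_S(\cC)$ into a uniform finite covering in a setting where both the asymptotic cone and the self-expander are allowed to vary simultaneously; Proposition \ref{CpctProp} supplies subsequential convergence with varying cones while Lemma \ref{ParamLem} turns such convergence into a genuine parameterization with the prescribed trace.
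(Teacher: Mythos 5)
Your proof is correct and takes essentially the same approach as the paper: both proofs derive the finite list $\Gamma_1,\dots,\Gamma_J$ and the uniform $\epsilon_1$ by combining the sequential compactness of $\mathcal{E}_S(\cC)$ in $C^\infty_{loc}$ (Proposition \ref{CpctProp}) with the parameterization Lemma \ref{ParamLem}, via compactness contradiction arguments. The only cosmetic difference is that the paper packages the first step as an equivalence relation on $\mathcal{E}_S(\cC)$ (two elements are equivalent if one parameterizes the other with trace $\mathbf{x}|_{\mathcal{L}(\cC)}$) with finitely many classes, and then does a single contradiction argument to pass from $\varphi=\mathbf{x}|_{\mathcal{L}(\cC)}$ to nearby $\varphi$; you instead formulate a local statement with perturbed traces built in, take a finite subcover, and close with a separate contradiction argument showing the finite cover captures all of $\mathcal{E}_S(\cC[\varphi])$ for $\varphi$ close. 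The underlying mechanism is identical.
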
	

\begin{proof}
We first claim that there are $\Gamma_1, \ldots, \Gamma_J\in\mathcal{E}_S(\cC)$ so that for any $\Gamma\in\mathcal{E}_S(\cC)$ there is an integer $i \in [1,J]$ and an element $\mathbf{f}_\Gamma\in \mathcal{ACH}_n^{k,\alpha}(\Gamma_i)$ so that $\mathbf{f}_\Gamma(\Gamma_i)=\Gamma$ and $\mathrm{tr}_\infty^1[\mathbf{f}_\Gamma]=\mathbf{x}|_{\mathcal{L}(\cC)}$.  
	
To see this is true, consider the following equivalence relation on $\mathcal{E}_S(\cC)$: two $\Gamma, \Gamma^\prime\in \mathcal{E}_S(\cC)$ are equivalent, written $\Gamma\sim \Gamma^\prime$, provided there is an $\mathbf{f}\in \mathcal{ACH}_{n}^{k,\alpha}(\Gamma)$ so that $\Gamma^\prime=\mathbf{f}(\Gamma)$ and $\mathrm{tr}_\infty^1[\mathbf{f}]=\mathbf{x}|_{\mathcal{L}(\cC)}$. It follows from \cite[Proposition 3.3]{BWBanach} that this is an equivalence relation. Indeed, it is reflexive as $\mathbf{x}|_{\Gamma}\in \mathcal{ACH}_{n}^{k,\alpha}(\Gamma)$ so $\Gamma\sim \Gamma$. It is symmetric as $\mathbf{f}\in \mathcal{ACH}_{n}^{k,\alpha}(\Gamma)$ with $\mathbf{f}(\Gamma)=\Gamma^\prime$ and $\mathrm{tr}_\infty^1[\mathbf{f}]=\mathbf{x}|_{\mathcal{L}(\cC)}$, implies that $\mathbf{f}^{-1} \in \mathcal{ACH}_{n}^{k,\alpha}(\Gamma^\prime)$ and $\mathrm{tr}_\infty^1[\mathbf{f}^{-1}]=\mathbf{x}|_{\mathcal{L}(\cC)}$. Finally, it is transitive as $\mathbf{f}\in \mathcal{ACH}_{n}^{k,\alpha}(\Gamma)$ with $\mathbf{f}(\Gamma)=\Gamma^\prime$ and $\mathrm{tr}_\infty^1[\mathbf{f}]=\mathbf{x}|_{\mathcal{L}(\cC)}$, and $\mathbf{g}\in \mathcal{ACH}_{n}^{k,\alpha}(\Gamma^\prime)$ with $\mathbf{g}(\Gamma^\prime)=\Gamma^{\prime\prime}$ and $\mathrm{tr}_\infty^1[\mathbf{g}]=\mathbf{x}|_{\mathcal{L}(\cC)}$, implies that $\mathbf{g}\circ \mathbf{f}\in \mathcal{ACH}_{n}^{k,\alpha}(\Gamma)$ and $\mathrm{tr}_\infty^1[\mathbf{g}\circ \mathbf{f}]=\mathbf{x}|_{\mathcal{L}(\cC)}$ so $\Gamma\sim \Gamma^{\prime\prime}$. It readily follows from Proposition \ref{CpctProp} and Lemma \ref{ParamLem} that  there are finitely many equivalence classes in $\mathcal{E}_S(\cC)$. Pick representatives $\Gamma_1, \ldots, \Gamma_J$ and observe that we have shown the proposition for any $\Gamma\in \mathcal{E}_S(\cC)$. 
	
We now argue by contradiction. Suppose there is a sequence of $\varphi_j\in C^{k,\alpha}(\mathcal{L}(\cC); \Real^{n+1})$ with $\Vert \varphi_j-\mathbf{x}|_{\mathcal{L}(\cC)}\Vert_{k,\alpha}\to 0$ and $\Sigma_j \in \mathcal{E}_S(\mathcal{C}[\varphi_j])$ so that the conclusion does not hold for $\Sigma_j$.  By Proposition \ref{CpctProp}, up to passing to a subsequence, there is a $\Sigma \in \mathcal{E}_S(\cC)$ so that $\Sigma_j\to \Sigma$ in $C^\infty_{loc}(\Real^{n+1})$.  By Lemma \ref{ParamLem}, up to throwing out a finite number of terms, there are ${\mathbf{g}}_j\in\mathcal{ACH}_n^{k,\alpha}(\Sigma)$ so that $\mathbf{g}_j(\Sigma)=\Sigma_j$ and $\mathrm{tr}_\infty^1[\mathbf{g}_j]=\varphi_j$.

As $\Sigma\in \mathcal{E}_S(\cC)$, there is an integer $i \in [1,J]$ so $\Gamma_i\sim \Sigma$. That is, there is an $\mathbf{h}\in \mathcal{ACH}_n^{k,\alpha}(\Sigma)$ with $\mathbf{h}(\Gamma_i)=\Sigma$ and $\mathrm{tr}_{\infty}^1[\mathbf{h}]=\mathbf{x}|_{\mathcal{L}(\cC)}$. Setting $\mathbf{f}_j=\mathbf{g}_j\circ \mathbf{h}$, shows the result holds for the $\Sigma_j$, and this contradiction proves the claim.
\end{proof}

Given an element $\Sigma\in\mathcal{ACH}^{k,\alpha}_n$, there is a natural equivalence relation on $\mathcal{ACH}^{k,\alpha}_n(\Sigma)$. Namely, two elements $\mathbf{f},\mathbf{g}\in\mathcal{ACH}^{k,\alpha}_n(\Sigma)$ are equivalent, written $\mathbf{f}\sim\mathbf{g}$, provided that $\mathbf{f}(\Sigma)=\mathbf{g}(\Sigma)$ and $\mathrm{tr}_\infty^1[\mathbf{f}]=\mathrm{tr}_\infty^1[\mathbf{g}]$. Denote by $[\mathbf{f}]$ the equivalence class of $\mathbf{f}$. Let 
$$
\mathcal{ACE}^{k,\alpha}_n(\Sigma)=\set{[\mathbf{f}]\colon \mbox{$\mathbf{f}\in\mathcal{ACH}^{k,\alpha}_n(\Sigma)$ and $\mathbf{f}(\Sigma)$ satisfies \eqref{ExpanderEqn}}}.
$$
The main result of \cite{BWBanach} is that $\mathcal{ACE}^{k,\alpha}_n(\Sigma)$ is a smooth Banach manifold and the projection map $\Pi_{\Sigma}\colon\mathcal{ACE}^{k,\alpha}_n(\Sigma)\to C^{k,\alpha}(\mathcal{L}(\Sigma);\mathbb{R}^{n+1})$ given by $\Pi_{\Sigma}([\mathbf{f}])=\mathrm{tr}_\infty^1[\mathbf{f}]$ is smooth Fredholm of index $0$.

\begin{cor}\label{StrictStableCor}
For $k\geq 2$ and $\alpha\in (0,1)$, let $\cC$ be a $C^{k,\alpha}$-regular cone in $\Real^{n+1}$ and assume either $2\leq n \leq 6$ or $\lambda[\cC]<\Lambda_n$. There is an open neighborhood $\mathcal{V}$  of $\mathbf{x}|_{\mathcal{L}(\cC)}$ in $C^{k,\alpha}(\mathcal{L}(\cC); \Real^{n+1})$ so that, for a generic (in the sense of Baire category) element $\varphi\in\mathcal{V}$, every element of $\mathcal{E}_S(\cC[\varphi])$ is strictly stable.
\end{cor}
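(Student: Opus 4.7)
The plan is to combine Proposition \ref{FinitenessProp} with the Sard--Smale theorem applied to the smooth Fredholm projection maps $\Pi_{\Gamma_i}$ from \cite{BWBanach}.

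First, I would apply Proposition \ref{FinitenessProp} to extract a constant $\epsilon_1 = \epsilon_1(\cC) > 0$ and finitely many representatives $\Gamma_1, \ldots, \Gamma_J \in \mathcal{E}_S(\cC)$ such that, whenever $\varphi \in C^{k,\alpha}(\mathcal{L}(\cC); \Real^{n+1})$ satisfies $\Vert\varphi - \mathbf{x}|_{\mathcal{L}(\cC)}\Vert_{k,\alpha} < \epsilon_1$ and $\Gamma \in \mathcal{E}_S(\cC[\varphi])$, there is an index $i$ and a class $[\mathbf{f}] \in \Pi_{\Gamma_i}^{-1}(\varphi)$ with $\mathbf{f}(\Gamma_i) = \Gamma$. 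Take $\mathcal{V}$ to be the corresponding $\epsilon_1$-ball inside $\mathcal{V}_{\mathrm{emb}}(\cC)$.

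Next, by the main theorem of \cite{BWBanach}, each $\Pi_{\Gamma_i} \colon \mathcal{ACE}^{k,\alpha}_n(\Gamma_i) \to C^{k,\alpha}(\mathcal{L}(\Gamma_i); \Real^{n+1})$ is smooth Fredholm of index $0$. After identifying $\mathcal{L}(\Gamma_i) = \mathcal{L}(\cC)$, the Sard--Smale theorem yields a residual subset $\mathcal{R}_i \subset C^{k,\alpha}(\mathcal{L}(\cC); \Real^{n+1})$ consisting of regular values of $\Pi_{\Gamma_i}$. As $\mathcal{V}$ is an open subset of a Banach space and hence Baire, $\mathcal{V} \cap \bigcap_{i=1}^J \mathcal{R}_i$ is residual in $\mathcal{V}$. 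For such a generic $\varphi$ and any $\Gamma \in \mathcal{E}_S(\cC[\varphi])$, pull $\Gamma$ back to a class $[\mathbf{f}] \in \Pi_{\Gamma_i}^{-1}(\varphi)$ as above; then $d\Pi_{\Gamma_i}|_{[\mathbf{f}]}$ is surjective, and the index zero property forces it to be an isomorphism.

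The heart of the argument is the identification of $\ker d\Pi_{\Gamma_i}|_{[\mathbf{f}]}$ with $\mathcal{K}_\Gamma$. Geometrically, the tangent space to $\mathcal{ACE}^{k,\alpha}_n(\Gamma_i)$ at $[\mathbf{f}]$ corresponds to normal variations $u \mathbf{n}_\Gamma$ satisfying the linearized self-expander equation $L_\Gamma u = 0$, while $d\Pi_{\Gamma_i}$ extracts the trace at infinity; kernel elements therefore correspond to Jacobi fields whose trace at infinity vanishes, which by definition populate $\mathcal{K}_\Gamma$. Once this identification is secured, triviality of $\ker d\Pi_{\Gamma_i}|_{[\mathbf{f}]}$ gives $\mathrm{null}(\Gamma) = \dim \mathcal{K}_\Gamma = 0$, i.e., $\Gamma$ is strictly stable. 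The main technical obstacle is precisely this kernel identification: although it is morally a bookkeeping exercise in the Banach-manifold setup of \cite{BWBanach}, one must verify carefully that the weighted function-space definition of $\mathcal{K}_\Gamma$ (via $C^0_{1,0}$ decay) matches exactly with tangent vectors of trace zero at infinity, so that neither spurious kernel elements appear nor genuine ones are omitted.
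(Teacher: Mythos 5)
Your proposal is correct and follows the paper's argument essentially verbatim: combine Proposition~\ref{FinitenessProp} with the Fredholm-index-zero property of the projections $\Pi_{\Gamma_i}$ and Sard--Smale to find a residual set of simultaneous regular values, and identify regularity with the vanishing of $\mathcal{K}_\Gamma$. The kernel identification you flag as the ``heart of the argument'' is exactly the content of \cite[Corollary~1.2]{BWBanach}, which the paper cites directly (and which also upgrades the set of regular values to be open, not merely residual), so this is not a gap.
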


\begin{proof}
Pick $\epsilon_1=\epsilon_1(\cC)>0$ as in Proposition \ref{FinitenessProp}. When $n\geq 7$, as $\lambda[\cC]<{\Lambda_n}$, \cite[Lemma 3.8]{BWProperness} ensures that, up to shrinking $\epsilon_1$, if $\Vert\varphi-\mathbf{x}|_{\mathcal{L}(\cC)}\Vert_{k,\alpha}<\epsilon_1$, then $\cC[\varphi]$ satisfies $\lambda[\cC[\varphi]]<{\Lambda_n}$. Let $\mathcal{V}_0$ be the open ball in $C^{k,\alpha}(\mathcal{L}(\cC); \Real^{n+1})$ centered at $\mathbf{x}|_{\mathcal{L}(\cC)}$ of radius $\epsilon_1$. Pick $\Gamma_1, \ldots, \Gamma_J$ as in Proposition \ref{FinitenessProp}. By \cite[Corollary 1.2]{BWBanach}, there are open dense sets $\mathcal{V}_1, \ldots, \mathcal{V}_J\subset C^{k,\alpha}(\mathcal{L}(\cC); \Real^{n+1})$ so that each $\Pi_{\Gamma_i}$  has no critical values in $\mathcal{V}_i$.  That is, if $\Gamma=\mathbf{f}(\Gamma_i)$ and $\mathrm{tr}_\infty^1[\mathbf{f}]\in \mathcal{V}_i$, then $\Gamma$ has no non-trivial Jacobi fields that fix the infinity. In particular, if $\varphi\in \mathcal{V}=\bigcap_{i=0}^J\mathcal{V}_i$, then every element of $\mathcal{E}_S(\cC[\varphi])$ is strictly stable. 
\end{proof}

\begin{thm} \label{StableIsotopyThm}
For $n,k\geq 2$ and $\alpha\in (0,1)$, let $\mathcal{C}$ be a $C^{k,\alpha}$-regular cone in $\mathbb{R}^{n+1}$ and assume $\lambda[\mathcal{C}]<\Lambda_n^*$. For each $\Gamma\in\mathcal{E}_S(\mathcal{C})$ there exists an open neighborhood $\mathcal{V}_\Gamma\subset C^{k,\alpha}(\mathcal{L}(\Gamma);\mathbb{R}^{n+1})$ of $\mathbf{x}|_{\mathcal{L}(\Gamma)}$ so that for any $\varphi\in \mathcal{V}_\Gamma$ there is an element $\Gamma_\varphi\in\mathcal{E}_S(\cC[\varphi])$ and a $C^{k,\alpha}$ a.c.-isotopy $\mathbf{F}_\varphi$ between $\Gamma$ and $\Gamma_\varphi$ so that, for all $t\in [0,1]$,
\begin{equation} \label{TraceDiffEqn}
\Vert\mathrm{tr}_\infty^1[\mathbf{F}_\varphi(t)]-\mathbf{x}|_{\mathcal{L}(\Gamma)}\Vert_{k,\alpha} \leq \Vert\varphi-\mathbf{x}|_{\mathcal{L}(\Gamma)}\Vert_{k,\alpha}.
\end{equation}
\end{thm}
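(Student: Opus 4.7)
My plan is to argue separately according to whether $\Gamma$ is \emph{strictly stable} ($\mathrm{null}(\Gamma)=0$) or merely \emph{weakly stable} ($\mathrm{null}(\Gamma)>0$); in the first case I would invoke an implicit function theorem, while in the second I would use the mean curvature flow machinery of Proposition \ref{FlowProp}. In both cases the target $\Gamma_\varphi$ is to be realized as a small perturbation of $\Gamma$, and the isotopy $\mathbf{F}_\varphi$ is obtained by concatenating a short initial perturbation with (in the weakly stable case) a fixed-cone MCF isotopy.

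In the strictly stable case the linearization of the projection $\Pi_\Gamma\colon\mathcal{ACE}^{k,\alpha}_n(\Gamma)\to C^{k,\alpha}(\mathcal{L}(\Gamma);\mathbb{R}^{n+1})$ at $[\mathbf{x}|_\Gamma]$ has trivial kernel and, being smooth Fredholm of index zero by \cite{BWBanach}, is an isomorphism. The implicit function theorem yields a neighborhood $\mathcal{V}_\Gamma$ of $\mathbf{x}|_{\mathcal{L}(\Gamma)}$ and a continuous local right inverse; pick representatives $\mathbf{f}_\varphi\in\mathcal{ACH}^{k,\alpha}_n(\Gamma)$ with $\mathbf{f}_{\mathbf{x}|_{\mathcal{L}(\Gamma)}}=\mathbf{x}|_\Gamma$, $\mathrm{tr}_\infty^1[\mathbf{f}_\varphi]=\varphi$, and $\Gamma_\varphi:=\mathbf{f}_\varphi(\Gamma)\in\mathcal{E}(\cC[\varphi])$. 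Proposition \ref{CpctProp} combined with continuity of the spectrum of $L_{\Gamma_\varphi}$ in $\varphi$ lets us shrink $\mathcal{V}_\Gamma$ so that $\Gamma_\varphi$ remains strictly stable, hence $\Gamma_\varphi\in\mathcal{E}_S(\cC[\varphi])$. The candidate isotopy is $\mathbf{F}_\varphi(t):=\mathbf{f}_{\varphi_t}$ with $\varphi_t:=(1-t)\mathbf{x}|_{\mathcal{L}(\Gamma)}+t\varphi$; its trace at time $t$ equals $\varphi_t$, so $\Vert\mathrm{tr}_\infty^1[\mathbf{F}_\varphi(t)]-\mathbf{x}|_{\mathcal{L}(\Gamma)}\Vert_{k,\alpha}=t\Vert\varphi-\mathbf{x}|_{\mathcal{L}(\Gamma)}\Vert_{k,\alpha}$, as required.

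When $\Gamma$ is only weakly stable the IFT fails, and I would instead proceed as in Section \ref{FlowSec}. For each $\varphi$ in a sufficiently small $\mathcal{V}_\Gamma$, the aim is to construct an asymptotically conical hypersurface $\tilde\Sigma_\varphi\in\mathcal{ACH}^{k,\alpha}_n$ with $\mathcal{C}(\tilde\Sigma_\varphi)=\cC[\varphi]$ that is \emph{expander mean-convex} in the sense of hypothesis \eqref{RescaleMCHypo} of Proposition \ref{FlowProp} and a.c.-isotopic to $\Gamma$ along a short path whose traces interpolate linearly from $\mathbf{x}|_{\mathcal{L}(\Gamma)}$ to $\varphi$. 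The initial isotopy would be built from Lemma \ref{ParamLem} (producing $\mathbf{g}_\varphi=\mathbf{x}|_\Gamma+\mathbf{h}_\varphi\in\mathcal{ACH}^{k,\alpha}_n(\Gamma)$ with $\mathrm{tr}_\infty^1[\mathbf{g}_\varphi]=\varphi$), Lemma \ref{IsotopyLem} to interpolate, and Lemma \ref{IsotopyFixConeLem} to force the traces onto the linear segment; the entropy bound $\lambda[\tilde\Sigma_\varphi]<\Lambda_n^*$ would then follow from Lemma \ref{EntropyContLem}. Proposition \ref{FlowProp} applied to $\tilde\Sigma_\varphi$ would give a fixed-cone a.c.-isotopy to a stable self-expander $\Gamma_\varphi\in\mathcal{E}_S(\cC[\varphi])$, and the concatenation would be $\mathbf{F}_\varphi$. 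Since the MCF phase has constant trace $\varphi$, the global trace estimate persists.

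The hard part is constructing $\tilde\Sigma_\varphi$ with the required expander mean-convexity. In the strictly stable case the positive first eigenfunction $f_1>0$ of $-L_\Gamma$ has eigenvalue $\mu_1>0$, and a computation analogous to the one in the proof of Theorem \ref{UnstableExpanderThm} (ultimately from \cite[Lemma~A.2]{BWExpanderRelEnt}) shows that the normal perturbation $\mathbf{g}_\varphi+\epsilon f_1\mathbf{n}_\Gamma$ with small $\epsilon>0$ has $E^{O,1}\geq c\,\epsilon\mu_1\psi_\beta(1+|\mathbf{x}|^2)$, dominating the $O(\Vert\mathbf{h}_\varphi\Vert^2)$ error from the cone perturbation once $\Vert\varphi-\mathbf{x}|_{\mathcal{L}(\Gamma)}\Vert_{k,\alpha}\ll\epsilon$. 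In the weakly stable case $\mu_1=0$, so this first-order positivity vanishes, and one must balance the perturbation scale $\epsilon$ against $\Vert\varphi-\mathbf{x}|_{\mathcal{L}(\Gamma)}\Vert_{k,\alpha}$ and extract positivity from the quadratic remainder in the expansion of the expander operator, using the refined decay estimates on $\mathbf{h}_\varphi$ from \cite{BWBanach} together with the pointwise estimates on $f_1$ from \cite{BWMinMax}. This delicate balancing is the principal technical hurdle; once carried out, the rest of the argument goes through as sketched above.
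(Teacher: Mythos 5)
Your strategy coincides with the paper's on the ``framework'' level (reduce to constructing an expander mean-convex initial hypersurface asymptotic to $\cC[\varphi]$, then run Proposition~\ref{FlowProp}), and your strictly stable case is essentially a rephrasing of the paper's argument when $\mathcal{K}_\mathbf{v}=\{0\}$. But there is a genuine gap in the weakly stable case, and you flag it yourself without closing it.

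The core difficulty is producing, for every $\varphi$ near $\mathbf{x}|_{\mathcal{L}(\Gamma)}$, a hypersurface asymptotic to $\cC[\varphi]$ whose expander mean curvature $E^{O,1}$ has a definite sign. Your plan is to take $\mathbf{g}_\varphi=\mathbf{x}|_\Gamma+\mathbf{h}_\varphi$ (Lemma~\ref{ParamLem}), then push off by $\epsilon f_1\mathbf{n}_\Gamma$ where $f_1>0$ is the first eigenfunction. As you correctly observe, the linearization contributes a term $\approx \epsilon\mu_1 f_1$, which vanishes precisely in the weakly stable case $\mu_1=0$. You then propose to extract positivity from the \emph{quadratic} remainder by ``balancing'' $\epsilon$ against $\|\varphi-\mathbf{x}|_{\mathcal{L}(\Gamma)}\|_{k,\alpha}$, but there is no reason the quadratic term has a definite sign — and a balancing argument in $\epsilon$ cannot produce a sign that is not already there. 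More to the point, whatever $\epsilon$ is, the cone-moving part $\mathbf{h}_\varphi$ contributes a first-order term to $E^{O,1}$ that has no reason to be dominated by, or collinear with, $f_1$. So the construction of $\tilde\Sigma_\varphi$ is not actually carried out, and the proposal as written does not prove the theorem.

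The paper avoids the difficulty entirely by a different mechanism. Instead of a naive normal push-off, it uses the Lyapunov--Schmidt reduction from \cite[Theorem~7.1]{BWBanach}: the map $F_\mathbf{v}[\varphi,0]$ is constructed precisely so that the expander mean curvature $G_\mathbf{v}[\varphi,0]$ of $\Sigma_\varphi=F_\mathbf{v}[\varphi,0](\Gamma)$ lies in the finite-dimensional kernel $\mathcal{K}_\mathbf{v}$. On each weakly stable component $\Gamma^j$, spectral theory gives $\dim\mathcal{K}_\mathbf{v}^j=1$, spanned by a sign-definite eigenfunction $\kappa_j$, so $G_\mathbf{v}[\varphi,0]|_{\Gamma^j}=g^j_\mathbf{v}[\varphi]\kappa_j$ is automatically sign-definite or identically zero — the expander mean-convexity you need comes for free, with no extra $\epsilon f_1$ perturbation and no balancing. (The asymptotic lower bound $E^{O,1}\geq c\psi_\beta(1+|\mathbf{x}|^2)$ required by Proposition~\ref{FlowProp} then follows from the decay estimates on $\kappa_j$ in \cite[Lemma~6.1]{BWBanach} and \cite[Proposition~3.2]{BWExpanderRelEnt}.) The isotopy is simply $t\mapsto (1-t)\mathbf{x}|_\Gamma+tF_\mathbf{v}[\varphi,0]$, whose trace is $(1-t)\mathbf{x}|_{\mathcal{L}(\Gamma)}+t\varphi$, giving \eqref{TraceDiffEqn} directly. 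If you want to salvage your approach, replace the ansatz $\mathbf{g}_\varphi+\epsilon f_1\mathbf{n}_\Gamma$ by the reduced solution $F_\mathbf{v}[\varphi,0]$; the ``delicate balancing'' then becomes unnecessary.
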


\begin{proof}
Let $\mathbf{v}$ be a transverse section on $\Gamma$ as given in \cite[Section 7]{BWBanach} and let
$$
\mathcal{K}_{\mathbf{v}}=\set{\kappa\in C^2_{loc}\cap C^0_{1,0}(\Gamma)\colon L_\Gamma (\kappa\mathbf{v}\cdot\mathbf{n}_{\Gamma})=0}
$$
where $L_\Gamma$ is the self-joint operator given in Section \ref{IndexSubsec}. By \cite[Theorem 7.1]{BWBanach}, there are two open neighborhoods $\mathcal{U}_1\subset C^{k,\alpha}(\mathcal{L}(\Gamma);\mathbb{R}^{n+1})$ of $\mathbf{x}|_{\mathcal{L}(\Gamma)}$ and $\mathcal{U}_2\subset\mathcal{K}_\mathbf{v}$ of $0$ together with a smooth map $F_\mathbf{v}\colon\mathcal{U}_1\times\mathcal{U}_2\to\mathcal{ACH}_n^{k,\alpha}(\Gamma)$ so that:
\begin{itemize}
\item $F_\mathbf{v}[\mathbf{x}|_{\mathcal{L}(\Gamma)},0]=\mathbf{x}|_\Gamma$;
\item $\mathrm{tr}_\infty^1[F_\mathbf{v}[\varphi,\kappa]]=\varphi$;
\item $G_\mathbf{v}[\varphi,\kappa]=\mathbf{v}\cdot\left(\mathbf{H}-\frac{1}{2}\mathbf{x}^\perp\right)[F_\mathbf{v}[\varphi,\kappa]]\in\mathcal{K}_\mathbf{v}$.
\end{itemize}
Thus, by shrinking $\mathcal{U}_1$ if needed, it follows from Lemma \ref{IsotopyLem} that, for every $\varphi\in\mathcal{U}_1$, the path $t\mapsto \mathbf{F}^\prime_\varphi(t)$ given by
$$
\mathbf{F}^\prime_\varphi(t)=(1-t)\mathbf{x}|_\Gamma+t F_\mathbf{v}[\varphi,0] \mbox{ for $t\in [0,1]$}
$$
is a $C^{k,\alpha}$ a.c.-isotopy between $\Gamma$ and $F_\mathbf{v}[\varphi,0](\Gamma)$. It is clear from the construction that
$$
\Vert\mathrm{tr}_\infty^1[\mathbf{F}_\varphi^\prime(t)]-\mathbf{x}|_{\mathcal{L}(\Gamma)}\Vert_{k,\alpha}\leq \Vert\varphi-\mathbf{x}|_{\mathcal{L}(\Gamma)}\Vert_{k,\alpha}.
$$

To conclude the proof we show that for every $\varphi\in\mathcal{U}_1$ there is a $C^{k,\alpha}$ a.c.-isotopy with fixed cone between $\Sigma_\varphi=F_{\mathbf{v}}[\varphi,0](\Gamma)$ and some element of $\mathcal{E}_S(\mathcal{C}[\varphi])$. Composing this with $\mathbf{F}_\varphi^\prime$ gives an a.c.-isotopy with the desired properties. In view of Theorem \ref{UnstableExpanderThm}, it suffices to show the claim  with $\mathcal{E}(\mathcal{C}[\varphi])$ replacing $\mathcal{E}_S(\mathcal{C}[\varphi])$. If $\mathcal{K}_{\mathbf{v}}=\set{0}$, then $\Sigma_\varphi$ is a $C^{k,\alpha}_*$-asymptotically conical self-expander with $\mathcal{C}(\Sigma_\varphi)=\mathcal{C}[\varphi]$ and the claim is proved. Otherwise, partition $\Gamma$ into its connected components $\Gamma^1,\ldots,\Gamma^M$, so all $\Gamma^j$ are stable with at least one weakly stable. For each $j\in \set{1,\dots,M}$, let $\Sigma_\varphi^j=F_{\mathbf{v}}[\varphi,0](\Gamma^j)$ and 
$$
\mathcal{K}_\mathbf{v}^j=\set{\kappa\in C^2_{loc}\cap C^0_{1,0}(\Gamma^j)\colon L_{\Gamma^j}(\kappa\mathbf{v}\cdot\mathbf{n}_{\Gamma^j})=0}.
$$
Observe that if $\kappa\in\mathcal{K}_{\mathbf{v}}$, then $\kappa|_{\Gamma^j}\in\mathcal{K}_{\mathbf{v}}^j$. If $\Gamma^j$ is strictly stable, then $\mathcal{K}_{\mathbf{v}}^j=\set{0}$ and $\Sigma_\varphi^j$ is a self-expander. If $\Gamma^j$ is weakly stable, then it follows from the standard spectral theory that $\dim\mathcal{K}_{\mathbf{v}}^j=1$. In this case we may choose $\kappa_j\in \mathcal{K}_\mathbf{v}^j$ to span $\mathcal{K}_{\mathbf{v}}^j$. Let $g^j_\mathbf{v}\colon \mathcal{U}_1\to\mathbb{R}$ so that $G_\mathbf{v}[\varphi,0]|_{\Gamma^j}=g^j_\mathbf{v}[\varphi]\kappa_j$. If $g^j_\mathbf{v}[\varphi]=0$, then $\Sigma_\varphi^j$ is a self-expander. Otherwise, it follows from \cite[Lemma 6.1]{BWBanach} and \cite[Proposition 3.2]{BWExpanderRelEnt} that, by a suitable choice of unit normal on $\Sigma_\varphi^j$,
$$
E^{O,1}_{\Sigma^j_\varphi} \geq c\psi_\beta\left(1+|\mathbf{x}|^2\right)
$$
for some $c,\beta>0$. Thus, Hypothesis (2) of Proposition \ref{FlowProp} holds for $\Sigma_\varphi^j$. Next observe that by the construction of $F_\mathbf{v}$ -- see pages 33-34 of \cite{BWBanach} for details -- Hypothesis \eqref{AsympConeHypo} of  Proposition \ref{FlowProp} holds for $\Sigma^j_\varphi$. Finally, by Lemma \ref{EntropyContLem} and shrinking $\mathcal{U}_1$ if needed, we may assume $\lambda[\Sigma_\varphi^j]\leq \lambda[\Sigma_\varphi]<\Lambda_n^*$, that is Hypothesis (3) of Proposition \ref{FlowProp}. Hence, one can apply Proposition \ref{FlowProp} to $\Sigma^j_\varphi$ and obtains a $C^{k,\alpha}_*$-asymptotically conical stable self-expander $\Gamma^j_\varphi$ that is $C^{k,\alpha}$ a.c.-isotopic with fixed cone to $\Sigma^j_\varphi$. Therefore, by the maximum principle for the MCF, combining all these cases gives an element $\Gamma_\varphi\in \mathcal{E}_S(\mathcal{C}[\varphi])$ that is $C^{k,\alpha}$ a.c.-isotopic with fixed cone to $\Sigma_\varphi$.
\end{proof}

\section{Proof of main theorems} \label{ProofMainThmSec}
In this section we prove Theorem \ref{MainThm} and Theorem \ref{AuxThm}. We first prove the result for cones $\cC$ with the extra property that every element of $\mathcal{E}_S(\cC)$ is strictly stable and then use the perturbation results of Section \ref{PerturbStableExpanderSec} to conclude the general case. 

Before beginning the proof we need a finiteness result for $\mathcal{E}_S(\cC)$.

\begin{lem}\label{FiniteLem}
For $k\geq 2$ and $\alpha\in (0,1)$, let $\cC$ be a $C^{k,\alpha}$-regular cone in $\Real^{n+1}$ and assume either $2\leq n \leq 6$ or $\lambda[\cC]<\Lambda_n$. If every element of $\mathcal{E}_S(\cC)$ is strictly stable, then $\mathcal{E}_S(\cC)$ is a finite set.
\end{lem}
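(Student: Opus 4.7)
The plan is to argue by contradiction, combining the compactness of $\mathcal{E}_S(\cC)$ (Proposition \ref{CpctProp}) with the fact that strict stability makes the trace-at-infinity map $\Pi_\Sigma$ a local diffeomorphism.

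Suppose $\mathcal{E}_S(\cC)$ is infinite and pick pairwise distinct $\Sigma_i\in\mathcal{E}_S(\cC)$. By Proposition \ref{CpctProp}, after passing to a subsequence, $\Sigma_i\to\Sigma$ in $C^\infty_{loc}(\Real^{n+1})$ for some $\Sigma\in\mathcal{E}_S(\cC)$. Applying Lemma \ref{ParamLem} with the \emph{constant} sequence $\varphi_i=\mathbf{x}|_{\mathcal{L}(\cC)}$ produces, for large $i$, parametrizations $\mathbf{f}_i\in\mathcal{ACH}^{k,\alpha}_n(\Sigma)$ with $\mathbf{f}_i(\Sigma)=\Sigma_i$ and $\mathrm{tr}_\infty^1[\mathbf{f}_i]=\mathbf{x}|_{\mathcal{L}(\cC)}$; tracking the construction in that lemma one has $\mathbf{f}_i=\mathbf{x}|_\Sigma+u_i\mathbf{v}$ where $u_i\in C^{k,\alpha}_1\cap C^k_{1,0}(\Sigma)$ and $\mathbf{v}$ is a fixed transverse section on $\Sigma$.

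The first step is to upgrade the local smooth convergence $\Sigma_i\to\Sigma$ to $\Vert u_i\Vert^{(1)}_{k,\alpha}\to 0$, i.e.\ $\mathbf{f}_i\to\mathbf{x}|_\Sigma$ in the Banach space $C^{k,\alpha}_1\cap C^k_{1,\mathrm{H}}(\Sigma;\Real^{n+1})$. For the unweighted part this is immediate from smooth convergence on compact sets. For the behaviour near infinity one observes that each $\Sigma_i$ and $\Sigma$ are $C^{k,\alpha}_*$-asymptotic to the same cone $\cC$; the uniform asymptotic decay of \cite[Proposition 3.3]{BWProperness} (applied to each $\Sigma_i$ with its cone $\cC$) gives uniform $C^{k,\alpha}$ graph estimates for $\Sigma_i$ over $\cC$ in its ends, which together with local smooth convergence in the compact region forces $u_i\to 0$ in the weighted norm $\Vert\cdot\Vert^{(1)}_{k,\alpha}$.

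The second step is to invoke the Banach manifold structure of \cite{BWBanach}: $\mathcal{ACE}^{k,\alpha}_n(\Sigma)$ is a smooth Banach manifold and $\Pi_\Sigma\colon\mathcal{ACE}^{k,\alpha}_n(\Sigma)\to C^{k,\alpha}(\mathcal{L}(\Sigma);\Real^{n+1})$ is smooth Fredholm of index $0$. The differential of $\Pi_\Sigma$ at $[\mathbf{x}|_\Sigma]$ has kernel isomorphic to the space of Jacobi fields of $\Sigma$ fixing infinity, which is trivial since $\Sigma$ is strictly stable. Hence $\Pi_\Sigma$ is a local diffeomorphism near $[\mathbf{x}|_\Sigma]$, and in particular locally injective. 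By the first step, $[\mathbf{f}_i]\to[\mathbf{x}|_\Sigma]$ in $\mathcal{ACE}^{k,\alpha}_n(\Sigma)$, while $\Pi_\Sigma([\mathbf{f}_i])=\mathbf{x}|_{\mathcal{L}(\cC)}=\Pi_\Sigma([\mathbf{x}|_\Sigma])$, so for all large $i$ one has $[\mathbf{f}_i]=[\mathbf{x}|_\Sigma]$, i.e.\ $\Sigma_i=\mathbf{f}_i(\Sigma)=\Sigma$. This contradicts the $\Sigma_i$ being pairwise distinct, and the lemma follows.

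The main obstacle is the first step — promoting $C^\infty_{loc}$ convergence to convergence in the weighted $C^{k,\alpha}_1$ norm with controlled behaviour at infinity — since the topology on $\mathcal{ACE}^{k,\alpha}_n(\Sigma)$ is strictly stronger than smooth local convergence. Once that uniform asymptotic control is in hand, the Fredholm/regular-value argument is a clean application of the inverse function theorem.
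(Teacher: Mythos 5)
Your proof is correct and takes essentially the same route as the paper: compactness of $\mathcal{E}_S(\cC)$ (Proposition \ref{CpctProp}) combined with strict stability, which makes $\Pi_\Sigma$ a local diffeomorphism, forces finiteness. The paper packages this via Proposition \ref{FinitenessProp} and a finite open-cover argument and handles the step you flag as the ``main obstacle'' (promoting $C^\infty_{loc}$ convergence to convergence in the $\mathcal{ACE}^{k,\alpha}_n$ topology) by citing \cite[Proposition 4.1]{BWProperness}, whereas you run the equivalent sequential/contrapositive version.
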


\begin{proof}
By Proposition \ref{FinitenessProp} there are $\Gamma_1, \ldots, \Gamma_J\in \mathcal{E}_S(\cC)$ so that for each $\Gamma\in \mathcal{E}_S(\cC)$ there is an integer $i\in [1,J]$ and an element $\mathbf{f}_\Gamma\in\mathcal{ACE}^{k,\alpha}_n(\Gamma_i)$ so that $\mathrm{tr}^1_\infty[\mathbf{f}_\Gamma]= \mathbf{x}|_{\mathcal{L}(\cC)}$ and $\mathbf{f}_\Gamma(\Gamma_i)=\Gamma$. As $\Gamma$ is strictly stable, $[\mathbf{f}_\Gamma]$ is a regular point of $\Pi_{\Gamma}$ -- See Section \ref{PerturbStableExpanderSec} -- and so there is an open neighborhood $\mathcal{U}_\Gamma \subset \mathcal{ACE}^{2,\alpha}_n(\Gamma_i)$ of $[\mathbf{f}_\Gamma]$ on which $\Pi_\Gamma$ restricts to a diffeomorphism. Clearly, $\set{\mathcal{U}_\Gamma}_{\Gamma\in\mathcal{E}_S(\cC)}$ is an open cover of $\set{[\mathbf{f}_\Gamma]}_{\Gamma\in\mathcal{E}_S(\cC)}$. Moreover, 
$$
\set{[\mathbf{f}_\Gamma]}_{\Gamma\in \mathcal{E}_S(\cC)}\cap \mathcal{U}_{\Gamma^\prime}=\set{[\mathbf{f}_{\Gamma^\prime}]}.
$$
	
By Proposition \ref{CpctProp}, $\mathcal{E}_S(\cC)$ is (sequentially) compact in $C^\infty_{loc}(\Real^{n+1})$. Hence, by \cite[Proposition 4.1]{BWProperness}, $\set{[\mathbf{f}_\Gamma]}_{\Gamma\in\mathcal{E}_S(\cC)}$ is (sequentially) compact in $\bigcup_{i=1}^M \mathcal{ACE}_n^{k,\alpha}(\Gamma_i)$. It follows (see \cite[Lemma A.1]{BWProperness}) that $\set{ \mathcal{U}_\Gamma}_{\Gamma\in\mathcal{E}_S(\cC)}$ has a finite subcover of $\set{[\mathbf{f}_\Gamma]}_{\Gamma\in \mathcal{E}_S(\cC)}$ and hence the latter set is finite. That is, $\mathcal{E}_S(\cC)$ is finite.
\end{proof}

We are now ready to prove Theorems \ref{MainThm} and \ref{AuxThm}. 

\begin{proof}[Proof of Theorems \ref{MainThm} and \ref{AuxThm}]
As $\mathcal{C}$ is a $C^{k+1}$-regular cone, it is obvious that $\cC$ is $C^{k,\alpha}$-regular for every $\alpha\in (0,1)$. Observe that, by Theorem \ref{UnstableExpanderThm}, every $\Gamma\in \mathcal{E}(\cC)$ is $C^{k}$ a.c.-isotopic with fixed cone to some element $\Gamma^\prime\in \mathcal{E}_S(\cC)$. Hence, it suffices to show that any two elements $\Gamma_1, \Gamma_2 \in \mathcal{E}_S(\cC)$ are $C^k$ a.c-isotopic with fixed cone. 

We now assume that $\cC$ has the property that $\mathcal{E}_S(\cC)$ consists only of strictly stable elements. For any $\Gamma\in \mathcal{E}_S(\cC)$, let 
$$
\mathcal{P}(\Gamma)=\set{\tilde{\Gamma}\in \mathcal{E}_S(\cC)\colon \Gamma\preceq\tilde{\Gamma}\preceq \Gamma_G}
$$
where $\Gamma_G$ is the greatest element given by Theorem \ref{GreatestThm}. By Lemma \ref{FiniteLem}, $\mathcal{P}(\Gamma)$ is a finite set. We will show, by induction on the number of elements, $M$, of $\mathcal{P}(\Gamma)$, that $\Gamma$ is $C^k$ a.c.-isotopic with fixed cone to $\Gamma_G$. The theorem clearly follows from this. 

To that end, first observe that as $\Gamma_G\in\mathcal{P}(\Gamma)$ there is nothing to prove when $M=1$. For general $M\geq 2$, let $\Gamma^\prime$ be a minimal element of $\mathcal{P}(\Gamma)\setminus\set{\Gamma}$. Thus, $\mathcal{P}(\Gamma^\prime)$ has at most $M-1$ elements. By the induction hypotheses, $\Gamma^\prime$ is $C^k$ a.c.-isotopic with fixed cone to $\Gamma_G$. Apply Theorem \ref{MinMaxThm} to $\Gamma$ and $\Gamma^\prime$. This produces a self-expander $\Sigma$ so that $\Gamma\preceq \Sigma \preceq \Gamma^\prime$ but $\Sigma\neq \Gamma$ and $\Sigma \neq \Gamma'$. As  $\Gamma^\prime$ is a minimal element of $\mathcal{P}(\Gamma)\backslash \set{\Gamma}$ we must have $\Sigma \not\in \mathcal{P}(\Gamma)$ and so $\Sigma \not\in \mathcal{E}_S(\cC)$. In particular, Theorem \ref{UnstableExpanderThm} implies $\Sigma$ is $C^k$ a.c.-isotopic with fixed cone to both $\Gamma$ and $\Gamma^\prime$, and hence they are both $C^{k}$ a.c.-isotopic with fixed cone to one another and hence also to $\Gamma_G$. This completes the proof in this case.

To prove the result for general $\cC$ first pick $\mathcal{V}$ as in Corollary \ref{StrictStableCor}. For any two elements $\Gamma_1, \Gamma_2\in \mathcal{E}_S(\cC)$, let $\mathcal{V}_{\Gamma_1}$ and $\mathcal{V}_{\Gamma_2}$ be given by Theorem \ref{StableIsotopyThm}.  As $\mathcal{V}\cap \mathcal{V}_{\Gamma_1}\cap \mathcal{V}_{\Gamma_2}$ is an open neighborhood of $\mathbf{x}|_{\mathcal{L}(\cC)}$, it follows from Corollary \ref{StrictStableCor} that there is an element $\varphi\in \mathcal{V}\cap \mathcal{V}_{\Gamma_1}\cap \mathcal{V}_{\Gamma_2}$ so that every element of $\mathcal{E}_S(\mathcal{C}[\varphi])$ is strictly stable. By Theorem \ref{StableIsotopyThm}, for $i\in\set{1,2}$ there is an element $\Gamma_i^\prime\in\mathcal{E}_S(\mathcal{C}[\varphi])$ and a $C^{k}$ a.c.-isotopy $\mathbf{F}_i$ between $\Gamma_i$ and $\Gamma_i^\prime$ so that $\eqref{TraceDiffEqn}$ holds. By what we have already shown, $\Gamma_1^\prime$ is $C^k$ a.c.-isotopic with fixed cone to $\Gamma_2^\prime$. Thus, by shrinking $\mathcal{V}_{\Gamma_1},\mathcal{V}_{\Gamma_2}$ if needed, we are able to use Lemma \ref{IsotopyFixConeLem} to conclude that $\Gamma_1$ is $C^k$ a.c.-isotopic with fixed cone to $\Gamma_2$ which completes the proof.
\end{proof}

\appendix

\section{Existence of isotopically trivial self-expanders of small entropy} \label{IsotopyTrivialApp}
In this section we use Theorem \ref{StableIsotopyThm} to prove the following existence result which was used in the proof of Corollary \ref{Uniq2Cor}.

\begin{prop}
For $3\leq n\leq 6$ and $k\geq 2$, if $\cC$ is a $C^{k+1}$-regular cone in $\mathbb{R}^{n+1}$ with $\mathcal{L}(\cC)\in \mathcal{S}^{k+1}_0(\Lambda_n^*)$, then there exists a self-expander $\Gamma$ asymptotic to $\cC$ that is $C^k$ a.c.-isotopic to $\Real^{n}\times\set{0}$.
\end{prop}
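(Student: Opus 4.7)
The plan is to run a continuity argument along the isotopy of links given by the hypothesis $\mathcal{L}(\cC)\in\mathcal{S}^{k+1}_0(\Lambda_n^*)$, starting from the equator (where $\R^n\times\set{0}$ is a strictly stable self-expander) and propagating existence of a stable self-expander a.c.-isotopic to the hyperplane using Theorem \ref{StableIsotopyThm} for openness and Proposition \ref{CpctProp} together with Lemmas \ref{ParamLem} and \ref{IsotopyLem} for closedness. Concretely, I would first unpack the definition of $\mathcal{S}^{k+1}_0(\Lambda_n^*)$ to produce a continuous path $s\mapsto\sigma_s$ of elements of $\mathcal{S}^{k+1}(\Lambda_n^*)$, $s\in[0,1]$, with $\sigma_0$ the equatorial sphere and $\sigma_1=\mathcal{L}(\cC)$; in particular $\lambda[\mathcal{C}[\sigma_s]]<\Lambda_n^*$ for all $s$. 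Define
$$
S=\set{s\in [0,1]\colon \text{some }\Gamma\in\mathcal{E}_S(\mathcal{C}[\sigma_s])\text{ is }C^k\text{ a.c.-isotopic to }\R^n\times\set{0}}.
$$
Clearly $0\in S$ since $\R^n\times\set{0}$ is itself a strictly stable self-expander asymptotic to the cone on the equator; the goal is to show $S=[0,1]$.

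For openness, fix $s_0\in S$ and a stable $\Gamma_{s_0}\in\mathcal{E}_S(\mathcal{C}[\sigma_{s_0}])$ that is $C^k$ a.c.-isotopic to the hyperplane. Apply Theorem \ref{StableIsotopyThm} to $\Gamma_{s_0}$ to obtain an open neighborhood $\mathcal{V}_{\Gamma_{s_0}}$ of $\mathbf{x}|_{\mathcal{L}(\Gamma_{s_0})}$ in $C^{k,\alpha}(\mathcal{L}(\Gamma_{s_0});\R^{n+1})$ such that every $\varphi\in\mathcal{V}_{\Gamma_{s_0}}$ yields some $\Gamma_\varphi\in\mathcal{E}_S(\cC[\varphi])$ connected to $\Gamma_{s_0}$ by a $C^{k,\alpha}$ a.c.-isotopy. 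By continuity of the path of links in $C^{k+1}(\mathbb{S}^n)$ (hence also in $C^{k,\alpha}(\mathbb{S}^n)$), for $s$ sufficiently close to $s_0$ the composition of $\iota_s\circ\iota_{s_0}^{-1}$ with $\mathbf{x}|_{\mathcal{L}(\Gamma_{s_0})}$ gives a $\varphi_s\in\mathcal{V}_{\Gamma_{s_0}}$ with $\cC[\varphi_s]=\cC[\sigma_s]$. Concatenating the a.c.-isotopy $\Gamma_{\varphi_s}\to\Gamma_{s_0}$ with the existing one from $\Gamma_{s_0}$ to the hyperplane puts $s\in S$.

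For closedness, take $s_i\to s_\infty$ with $s_i\in S$, witnessed by $\Gamma_{s_i}$. Since $3\leq n\leq 6$ and $\sigma_{s_i}\to\sigma_{s_\infty}$ in $C^{k+1}$, Proposition \ref{CpctProp} gives (after subsequence) $\Gamma_{s_i}\to\Gamma_{s_\infty}\in\mathcal{E}_S(\mathcal{C}[\sigma_{s_\infty}])$ in $C^\infty_{loc}(\R^{n+1})$. Choosing a family of parametrizations $\varphi_i\in C^{k,\alpha}(\mathcal{L}(\Gamma_{s_\infty});\R^{n+1})$ with $\cC[\varphi_i]=\cC[\sigma_{s_i}]$ and $\varphi_i\to\mathbf{x}|_{\mathcal{L}(\Gamma_{s_\infty})}$, Lemma \ref{ParamLem} produces for large $i$ elements $\mathbf{f}_i\in\mathcal{ACH}^{k,\alpha}_n(\Gamma_{s_\infty})$ with $\mathbf{f}_i(\Gamma_{s_\infty})=\Gamma_{s_i}$ and $\Vert\mathbf{f}_i-\mathbf{x}|_{\Gamma_{s_\infty}}\Vert^{(1)}_1$ as small as desired. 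Lemma \ref{IsotopyLem} then furnishes a $C^{k,\alpha}$ a.c.-isotopy from $\Gamma_{s_\infty}$ to $\Gamma_{s_i}$, which concatenated with the a.c.-isotopy from $\Gamma_{s_i}$ to $\R^n\times\set{0}$ certifies $s_\infty\in S$. Since $S$ is nonempty, open, and closed in the connected interval $[0,1]$, one has $1\in S$, giving the required self-expander.

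The main obstacle is the closedness step: one must convert the $C^\infty_{loc}$ varifold-type convergence output by Proposition \ref{CpctProp} into a genuine a.c.-isotopy (not merely a Hausdorff-type closeness), which is exactly the content of combining Lemma \ref{ParamLem} with Lemma \ref{IsotopyLem}. The remaining technical point, extracting a continuous $C^{k+1}$-family of embeddings $\iota_s$ realizing the prescribed link isotopy, is routine once the topology on $\mathcal{S}^{k+1}(\Lambda_n^*)$ is made explicit.
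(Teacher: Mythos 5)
Your proof is correct and follows essentially the same strategy as the paper's: fix $\alpha\in(0,1)$, run a continuity argument along the path of links provided by $\mathcal{L}(\cC)\in\mathcal{S}^{k+1}_0(\Lambda_n^*)$, use Theorem \ref{StableIsotopyThm} for openness (this is exactly how the paper handles the possibly weakly stable limit at $t_0=\sup S$), and combine Proposition \ref{CpctProp} with \cite[Proposition 3.3]{BWProperness} (equivalently Lemma \ref{ParamLem}) and Lemma \ref{IsotopyLem} for closedness; the paper phrases this as a supremum argument rather than an open-closed-connected argument, but the content is identical.
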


\begin{proof}
Let $\Gamma_0=\mathbb{R}^{n}\times\set{0}$ and set $\cC_0=\mathcal{C}(\Gamma_0)=\Gamma_0$. Fix any $\alpha\in (0,1)$. Let 
$$
\mathcal{V}=\set{\varphi\in C^{k,\alpha}(\mathcal{L}(\cC_0);\mathbb{R}^{n+1})\colon \mbox{$\mathscr{E}^{\mathrm{H}}_1[\varphi]$ is an embedding and $\lambda[\mathcal{C}[\varphi]]<\Lambda_n^*$}}
$$
and let $\mathcal{V}_0$ be the connected component of $\mathcal{V}$ that contains $\mathbf{x}|_{\mathcal{L}(\cC_0)}$. As $\mathcal{L}(\cC)\in\mathcal{S}_0^{k+1}(\Lambda_n^*)$ it follows that $\mathbf{x}|_{\mathcal{L}(\cC)}\in\mathcal{V}_0$.

Hence, there is a continuous path $\phi\colon[0,1]\to \mathcal{V}_0$ connecting $\mathbf{x}|_{\mathcal{L}(\cC_0)}$ to $\mathbf{x}|_{\mathcal{L}(\cC)}$. Let 
$$
t_0=\sup\set{t\in [0,1]\colon\mbox{there exists $\Gamma_t\in\mathcal{E}_S(\mathcal{C}[\phi(t)])$ that is $C^{k,\alpha}$ a.c.-isotopic to $\Gamma_0$}}.
$$
As $\Gamma_0$ is strictly stable, the projection $\Pi_{\Gamma_0}\colon\mathcal{ACE}^{k,\alpha}_n(\Gamma_0)\to \mathcal{V}_0$ which maps $[\mathbf{f}]$ to $\mathrm{tr}_\infty^1[\mathbf{f}]$ is a local diffeomorphism around $\mathbf{x}|_{\Gamma_0}$ and so $t_0>0$. 

Suppose $t_i\in [0,t_0)$ are such that $t_i\to t_0$ and that for each $i$ there exists $\Gamma_{t_i}\in\mathcal{E}_S(\cC[\phi(t_i)])$ that is $C^{k,\alpha}$ a.c.-isotopic to $\Gamma_0$. As $\cC[\phi(t_i)]\to\cC[\phi(t_0)]$ in $C^{k,\alpha}_{loc}(\Real^{n+1}\setminus\set{\mathbf{0}})$, Proposition \ref{CpctProp}, implies that, up to passing to a subsequence, $\Gamma_{t_i}\to\Gamma_{t_0}$ in $C^\infty_{loc}(\Real^{n+1})$ for an element $\Gamma_{t_0}\in\mathcal{E}_S(\cC[\phi(t_0)])$. Moreover, by \cite[Proposition 3.3]{BWProperness} and Lemma \ref{IsotopyLem}, $\Gamma_{t_0}$ is $C^{k,\alpha}$ a.c.-isotopic to $\Gamma_{t_i}$ and, thus, to $\Gamma_0$. In particular, it is enough to show $t_0=1$.  

If $t_0<1$, then $\mathbf{x}|_{\Gamma_{t_0}}$ cannot be a regular value of $\Pi_{\Gamma_0}$.  That is, $\Gamma_{t_0}$ is weakly stable. However, by Theorem \ref{StableIsotopyThm}, there is a sufficiently small $\epsilon>0$ so that for every $t$ with $|t-t_0|<\epsilon$ there is an element in $\mathcal{E}_S(\mathcal{C}[\phi(t)])$ that is $C^{k,\alpha}$ a.c.-isotopic to $\Gamma_{t_0}$.  This contradicts the definition of $t_0$ and so implies $t_0=1$ proving the proposition.
\end{proof}

\section{A mountain pass theorem for self-expanders} \label{MinmaxApp}
In the proof of Theorems \ref{MainThm} and \ref{AuxThm}, we used the following result which follows by combining \cite[Theorem 1.1]{BWMinMax}, \cite[Proposition 3.3]{BWProperness} and Lemma \ref{RegLem}.

\begin{thm}\label{MinMaxThm}
Fix an integer $k\geq 2$ and $\alpha\in (0,1)$. Let $\cC$ be a $C^{k+1}$-regular cone in $\mathbb{R}^{n+1}$ and assume either $2\leq n\leq 6$ or $\lambda[\cC]<\Lambda_n$. Suppose $\Sigma_-$ and $\Sigma_+$ are distinct strictly stable $C^{k,\alpha}_*$-asymptotically conical self-expanders with $\mathcal{C}(\Sigma_-)=\mathcal{C}(\Sigma_+)=\mathcal{C}$ and $\Sigma_-\preceq\Sigma_+$. Then there exists a $C^{k,\alpha}_*$-asymptotically conical self-expander $\Sigma_0\neq\Sigma_\pm$ with $\mathcal{C}(\Sigma_0)=\mathcal{C}$ and $\Sigma_-\preceq\Sigma_0\preceq\Sigma_+$.\end{thm}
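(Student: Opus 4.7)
The plan is to apply the min-max procedure for self-expanders from \cite[Theorem 1.1]{BWMinMax} using $\Sigma_-$ and $\Sigma_+$ as barriers, and then upgrade the resulting stationary varifold to an honest asymptotically conical self-expander via the regularity results cited in the statement. The idea is that in the region $W=\mathrm{cl}(\Omega_+(\Sigma_-))\cap\mathrm{cl}(\Omega_-(\Sigma_+))$ bounded by the two expanders, both $\Sigma_-$ and $\Sigma_+$ are strict local minima of the functional $E$ (their strict stability yields a positive lower bound on the second variation, hence a quantitative mountain pass geometry), so any continuous sweepout of $W$ from $\Sigma_-$ to $\Sigma_+$ attains its maximal $E$-value in the interior, and this maximum is strictly greater than $\max\{E[\Sigma_-],E[\Sigma_+]\}$.

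First, I would set up the space of admissible sweepouts: one considers paths of boundaries $\{\partial A_t\}_{t\in[0,1]}$ with $A_0=\mathrm{cl}(\Omega_-(\Sigma_-))$ and $A_1=\mathrm{cl}(\Omega_-(\Sigma_+))$, or the analogous $\Z_2$-Almgren-Pitts setup; the universal barrier of Proposition \ref{UniversalBarrierProp} guarantees these sweepouts have the non-compactness controlled at infinity. Running the min-max with $\Sigma_\pm$ as confining obstacles (and using the maximum principle of Solomon-White to keep the concentrating sequences away from them in the appropriate one-sided sense) produces, via \cite[Theorem 1.1]{BWMinMax}, an integral $n$-varifold $V$ which is $E$-stationary in the interior of $W$, whose support is contained in $W$, and whose $E$-value equals the min-max width. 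By the strict stability of $\Sigma_\pm$ the width is strictly greater than $\max\{E[\Sigma_-],E[\Sigma_+]\}$, so $\mathrm{spt}(V)$ is not contained in $\Sigma_-\cup\Sigma_+$.

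Next, I would verify that the tangent cone of $V$ at infinity is $\cC$: since $\mathrm{spt}(V)\subset W$ and both $\partial W$-pieces $\Sigma_\pm$ are $C^{k,\alpha}_*$-asymptotic to $\cC$, a squeeze argument using \cite[Proposition 3.3]{BWProperness} together with Huisken's monotonicity forces any tangent cone of $V$ at infinity to equal $\cC$. The singular set of $V$ has Hausdorff dimension at most $n-7$ by the general regularity theory provided by the min-max construction (the support is, away from $\Sigma_\pm$, locally almost-minimizing, so Schoen-Simon style estimates apply). Lemma \ref{RegLem} then promotes $V$ to a bona fide self-expander $\Sigma_0\in \mathcal{E}(\cC)$, and by construction $\Sigma_-\preceq\Sigma_0\preceq\Sigma_+$.

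The main obstacle will be ruling out the degenerate possibility that the min-max sequence collapses onto $\Sigma_-$ or $\Sigma_+$ with multiplicity, which would give $V=\Sigma_\pm$ or $V=2\Sigma_\pm$; this is where strict stability of $\Sigma_\pm$ enters decisively, providing a definite drop in $E$ along the eigenfunction directions and hence a genuine mountain pass with width strictly above $E[\Sigma_\pm]$. A secondary technical point is ensuring that the min-max is carried out in a class of sweepouts whose behavior at infinity is suitably controlled so that one can deduce the correct asymptotic cone — this is handled uniformly by confining the sweepouts inside a neighborhood of $W$ governed by the universal barrier of Proposition \ref{UniversalBarrierProp} and by the entropy bound $\lambda[\cC]<\Lambda_n$ (or $2\leq n\leq 6$) that rules out higher-multiplicity limit cones via the classification of low-entropy minimal cones invoked in Lemma \ref{RegLem}.
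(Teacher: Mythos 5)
Your proposal follows essentially the same route as the paper: invoke the mountain pass theorem for asymptotically conical self-expanders \cite[Theorem 1.1]{BWMinMax} to produce an $E$-stationary varifold trapped between $\Sigma_-$ and $\Sigma_+$ (and distinct from both, by their strict stability), then upgrade its regularity and asymptotic behavior via \cite[Proposition 3.3]{BWProperness} and Lemma \ref{RegLem} to obtain a genuine element of $\mathcal{E}(\cC)$. One minor caveat in your heuristic sketch of the min-max width argument: the functional $E$ is infinite on non-compact hypersurfaces, so the comparison with $\max\{E[\Sigma_-],E[\Sigma_+]\}$ requires the relative or renormalized energy framework that \cite{BWMinMax} and \cite{BWExpanderRelEnt} actually employ rather than $E$ itself.
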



\begin{thebibliography}{99}

\bibitem{AIC} S.B. Angenent, T. Ilmanen, and D.L. Chopp, \emph{A computed example of non-uniqueness of mean curvature flow in $\Real^3$}, Commun. in Partial Differential Equations 20 (1995), no. 11-12, 1937--1958.

\bibitem{Bernstein} J. Bernstein, \emph{Asymptotic structure of almost eigenfunctions of drift Laplacians on conical ends}, Amer. J. Math., to appear. 

\bibitem{BWInvent} J. Bernstein and L. Wang, \emph{A sharp lower bound for the entropy of closed hypersurfaces up to dimension six}, Invent. Math. 206 (2016), no. 3, 601--627.

\bibitem{BWDuke} J. Bernstein and L. Wang, \emph{A topological property of asymptotically conical self-shrinkers of small entropy}, Duke Math. J. 166 (2017), no. 3, 403--435.

\bibitem{BWGT} J. Bernstein and L. Wang, \emph{Topology of closed hypersurfaces of small entropy}, Geom. Topol. 22 (2018), no. 2, 1109--1141.

\bibitem{BWMRL} J. Bernstein and L. Wang, \emph{Hausdorff stability of the round two-sphere under small perturbations of the entropy}, Math. Res. Lett. 25 (2018), no. 2, 347--365.

\bibitem{BWBanach} J. Bernstein and L. Wang, \emph{The space of asymptotically conical self-expanders of mean curvature flow}, preprint. Available at \url{https://arxiv.org/abs/1712.04366}

\bibitem{BWProperness} J. Bernstein and L. Wang, \emph{Smooth compactness for spaces of asymptotically conical self-expanders of mean curvature flow}, Int. Math. Res. Not., to appear. Available at \url{https://doi.org/10.1093/imrn/rnz087}.

\bibitem{BWDegree} J. Bernstein and L. Wang, \emph{An integer degree for asymptotically conical self-expanders}, preprint. Available at \url{https://arxiv.org/abs/1807.06494}.

\bibitem{BWExpanderRelEnt} J. Bernstein and L. Wang, \emph{Relative expander entropy in the presence of a two-sided obstacle and applications}, preprint. Available at \url{https://arxiv.org/abs/1906.07863}.

\bibitem{BWMinMax} J. Bernstein and L. Wang, \emph{A mountain-pass theorem for asymptotically conical self-expanders}, preprint.

\bibitem{BWIsotopy} J. Bernstein and L. Wang, \emph{Closed hypersurfaces in $\mathbb{R}^4$ of low entropy are isotopically trivial}, preprint.

\bibitem{BSW} J. Bernstein and S. Wang, \emph{The level set flow of a hypersurface in $\mathbb R^4$ of low entropy does not disconnect}, Comm. Anal. Geom., to appear. Available at \url{https://arxiv.org/abs/1801.05083}.

\bibitem{Brakke} K. Brakke, The motion of a surface by its mean curvature, Mathematical Notes 20, Princeton University Press, Princeton, NJ, 1978.

\bibitem{CIMW} T.H. Colding, T. Ilmanen, W.P. Minicozzi II, and B. White, \emph{The round sphere minimizes entropy among closed self-shrinkers}, J. Differential Geom. 95 (2013), no. 1, 53--69.

\bibitem{CMGen} T.H. Colding and W.P. Minicozzi II, \emph{Generic mean curvature flow I; generic singularities}, Ann. of Math. (2) 175 (2012), no. 2, 755--833.

\bibitem{Ding} Q. Ding, \emph{Minimal cones and self-expanding solutions for mean curvature flows}, Math. Ann. 376 (2020), no. 1-2, 359--405.

\bibitem{EckerBook} K. Ecker, Regularity theory for mean curvature flow, Progress in Nonlinear Differential Equations and Their Applications, Vol. 57, Birkh\"{a}user Boston, Inc., Boston, MA, 2004.

\bibitem{EHAnn} K. Ecker and G. Huisken, \emph{Mean curvature evolution of entire graphs}, Ann. of Math. (2) 130 (1989), no. 3, 453--471.

\bibitem{EHInvent} K. Ecker and G. Huisken, \emph{Interior estimates for hypersurfaces moving by mean curvature}, Invent. Math. 105 (1991), 547--569.

\bibitem{GT} D. Gilbarg and N.S. Trudinger, Elliptic partial differential equations of second order, Reprint of the 1998 edition, Classics in Mathematics, Springer-Verlag, Berlin, 2001.

\bibitem{HershkovitsWhite} O. Hershkovits and B. White, \emph{Sharp entropy bounds for self-shrinkers in mean curvature flow}, Geom. Topol. 23 (2019), no. 3, 1611--1619.

\bibitem{Huisken} G. Huisken, \emph{Asymptotic behaviour for singularities of the mean curvature flow}, J. Differential Geom. 31 (1990), no. 1, 285--299.

\bibitem{IlmanenElliptic} T. Ilmanen, Elliptic regularization and partial regularity for motion by mean curvature, Mem. Amer. Math. Soc. 108 (1994), no. 520.

\bibitem{IlmanenLec} T. Ilmanen, \emph{Lectures on mean curvature flow and related equations}, unpublished notes (1995). Available at \url{http://www.math.ethz.ch/~ilmanen/papers/pub.html}.

\bibitem{IlmanenNevesSchulze} T. Ilmanen, A. Neves, and F. Schulze, \emph{On short time existence for the planar network flow}, J. Differential Geom. 111 (2019), no. 1, 39--89. 

\bibitem{KetoverZhou} D. Ketover and X. Zhou, \emph{Entropy of closed surfaces and min-max theory}, J. Differential Geom. 110 (2018), no. 1, 31--71.

\bibitem{LSU} O.A. Lady\v{z}enskaja, V.A. Solonnikov, and N.N. Ural'ceva, Linear and quasilinear equations of parabolic type, Translations of Mathematical Monographs, Vol. 23, American Mathematical Society, Providence, RI.

\bibitem{MarquesNeves} F.C. Marques and A. Neves, \emph{Min-max theory and the Willmore conjecture}, Ann. of Math. (2) 179 (2014), no. 2, 683--782.

\bibitem{MramorWang} A. Mramor and S. Wang, \emph{Mean curvature flow with surgery for low entropy mean convex hypersurfaces}, preprint. Available at \url{https://arxiv.org/abs/1804.04115}.

\bibitem{SchoenSimon} R. Schoen and L. Simon, \emph{Regularity of stable minimal hypersurfaces}, Comm. Pure Appl. Math. 34 (1981), no. 6, 741--797.

\bibitem{SSY} R. Schoen, L. Simon, and S.T. Yau, \emph{Curvature estimates for minimal hypersurfaces}, Acta Math. 134 (1975), no. 3-4, 275--288.

\bibitem{Simon} L. Simon, Lectures on geometric measure theory, Proceedings of the Centre for Mathematical Analysis, Australian National University No. 3, Canberra, 1983.

\bibitem{Smoczyk} K. Smoczyk, \emph{Starshaped hypersurfaces and the mean curvature flow}, Manuscripta Math. 95 (1998), no. 2, 225--236.

\bibitem{SolomonWhite} B. Solomon and B. White, \emph{A strong maximum principle for varifolds that are stationary with respect to even parametric elliptic functionals}, Indiana Univ. Math. J. 38 (1989), no. 3, 683--691.

\bibitem{Stone} A. Stone, \emph{A density function and the structure of singularities of the mean curvature flow}, Calc. Var. Partial Differential Equations 2 (1994), no. 4, 443--480.

\bibitem{SWang} S. Wang, \emph{Round spheres are Hausdorff stable under small perturbation of entropy}, J. Reine Angew. Math. 758 (2020), 261--280.

\bibitem{WhiteMod2} B. White, \emph{Currents and flat chains associated to varifolds, with an application to mean curvature flow}, Duke Math. J. 148 (2009), no. 1, 41--62.

\bibitem{WhiteObstaclePrinciple} B. White, \emph{Existence of smooth embedded surfaces of prescribed genus that minimize parametric even elliptic functionals on $3$-manifolds}, J. Differential Geom. 33 (1991), 413--443.

\bibitem{WhiteStrata} B. White, \emph{Stratification of minimal surfaces, mean curvature flows, and harmonic maps}, J. Reine Angew. Math. 488 (1997), 1--35.

\bibitem{WhiteReg} B. White, \emph{A local regularity theorem for mean curvature flow}, Ann. of Math. (2) 161 (2005), no. 3, 1487--1519. 

\bibitem{JZhu} J. Zhu, \emph{On the entropy of closed hypersurfaces and singular self-shrinkers}, J. Differential Geom. 114 (2020), no. 3, 551--593.

\end{thebibliography}
\end{document}